
\documentclass[11pt,twoside]{article}

\setlength{\textwidth}{160mm} \setlength{\textheight}{210mm}
\setlength{\parindent}{8mm} \frenchspacing
\setlength{\oddsidemargin}{0pt} \setlength{\evensidemargin}{0pt}
\thispagestyle{empty}
\usepackage{mathrsfs,amsfonts,amsmath,amssymb}
\usepackage{latexsym}
\pagestyle{myheadings}
\markboth{\centerline{\sc{\small }}} {\centerline{\sc{\small
%T.~Schoen and
I.~D.~Shkredov}}}
\newtheorem{satz}{Theorem}
\newtheorem{proposition}[satz]{Proposition}
\newtheorem{theorem}[satz]{Theorem}
\newtheorem{lemma}[satz]{Lemma}
\newtheorem{definition}[satz]{Definition}
\newtheorem{corollary}[satz]{Corollary}
\newtheorem{remark}[satz]{Remark}

\def\no{\noindent}

\def\eps{\varepsilon}
\def\_phi{\varphi}

\def\a{\alpha}
\def\d{\delta}

\def\v{\vec}
\def\F{{\mathbb F}}

\def\m{\times}

\def\ov{\overline}

\def\C{{\mathbb C}}
\def\R{{\mathbb R}}
\def\E{\mathsf {E}}
\def\T{{\mathbb T}}

\def\Z_N{{\mathbb Z}_N}
\def\Z{{\mathbb Z}}
\def\N{{\mathbb N}}

\def\f{{\mathbb F}}
\def\Gr{{\mathbf G}}

\def\l{\left}
\def\r{\right}

\def\oT{{\rm T}}

\def\supp{{\rm supp\,}}
\def\tr{{\rm tr\,}}

\def\G{\Gamma}
\def\FF{\widehat}
\def\c{\circ}

\def\Cf{{\mathcal C}}

\def\gs{\geqslant}
\def\T{\mathsf {T}}

\author{Shkredov I.D.}
\title{ Some new inequalities in additive combinatorics
\footnote{
This work was supported by grant RFFI NN
06-01-00383, 11-01-00759, Russian Government project 11.G34.31.0053,
Federal Program "Scientific and scientific--pedagogical staff of innovative Russia" 2009--2013
and
grant Leading Scientific Schools N 2519.2012.1.}
%\newline
%{\bf Keywords} : Gowers norms, linear equations.
%\newline
%MSC 2000 : 11B75, 11B99.}
}
\date{}
\begin{document}
\maketitle

\begin{center}
 Annotation.
\end{center}

{\it \small
    In the paper
    we find
    new inequalities
    %expressed in terms of
    %concerning
    %about
    %the cardinalities
    %of the
    involving the
    intersections $A\cap (A-x)$
    of shifts of some subset $A$ from an abelian group.
    We apply the inequalities to obtain new upper bounds for the additive energy
    of multiplicative subgroups and convex sets and also a series another results on the connection of the additive
    energy and so--called higher moments of convolutions.
    %Also
    Besides
    we prove new theorems on multiplicative subgroups
    %such  as
    concerning
    lower bounds for its doubling constants,
    sharp lower bound for the cardinality of sumset of a multiplicative subgroup and its subprogression
    and another results.
}
\\
%\\
%\\

\section{Introduction}
\label{sec:introduction}

There are two general ideas in additive combinatorics which are opposite to each other in some sense.
The first one is the following.
Let $\Gr = (\Gr,+)$ be a group and $A$ be an arbitrary subset of $\Gr$.
If we want to obtain an information about the additive structure of our set $A$
then it is useful to consider "more smooth"\, and larger objects like sumsets $A+A$, $A-A$, $A+A+A$
and so on (see \cite{tv}).
Finding good additive structure in sumsets can be used to get useful information about the original set $A$.
%Another
The second
idea is to consider smaller objects like $A\cap (A-x)$ and its generalizations
to obtain some required properties of $A$ again.
The
%last
latter
approach is presented brightly in papers \cite{Gow_4}, \cite{Gow_m}
and once more time, recently, in \cite{Schoen_Freiman}.
In the article we concentrate on the last method
and find new connections between the sets $A_x := A\cap (A-x)$ and the original set $A$.

The paper based on
%on formula (\ref{f:x_in_A-A_s}),
%a new
so--called eigenvalues method
%in additive combinatorics
(see papers \cite{s} and \cite{ss_E_k})
%and
as well as
Proposition \ref{p:weight}.
To obtain the proposition
%\ref{p:weight}
we develop the method from
%\cite{ss,sv,ss2,Li,Li2,Rudnev_C,ss_E_k}
\cite{ss,ss2,sv}
%is proved by
choosing some weight optimally and use a simple fact that
%$(A-A_s) (x) = (A-A_x) (s)$.
$x$ belongs to $A-A_s$ iff $s$ belongs to $A-A_x$.
The eigenvalues method can be represented, very roughly speaking, as follows.
The important role in additive combinatorics plays so--called the {\it additive energy} of a set $A$,
that is the sum $\E(A) := \sum_{x} |A_x|^2$.
We rewrite the sum as the action of a matrix
$$
    %\sum_{x} |A_x|^2
    \E(A)
        = \sum_{x,y} (\chi_A \c \chi_A) (x-y) \chi_A (x) \chi_A (y) = \langle \T \chi_A, \chi_A\rangle \,,
$$
where $\chi_A$ is the characteristic function of $A$,
by $\chi_A \c \chi_A$ we denote the convolution of $\chi_A$ (see the definition in the section \ref{sec:definitions})
and the square matrix $\T$ is $\T_{x,y} := (\chi_A \c \chi_A) (x-y)$, $x,y\in A$.
Studying the eigenvalues and the eigenfunctions of $\T$, we obtain the information about
the initial object $\E(A)$.
Another idea here is an attempt to use "local"\, analysis on $A$
in contrast to Fourier transformation method which is defined on the whole group.
Our
%method
approach
is especially useful in the situation when
$A$ coincide with a multiplicative subgroup of the finite field.
The reason is that we know all eigenvalues as well as eigenfunctions in the case.

The simplest consequences of the results are
%interesting
unusual
inequalities
\begin{equation}\label{f:heart}
    \sum_x \frac{|A_x|^2}{|A\pm A_x|} \le |A|^{-2} \sum_x |A_x|^3 \,,
\end{equation}
and
\begin{equation}\label{f:heart_2}
    \sum_{x,y,z \in A} |A_{x-y}| |A_{x-z}| |A_{y-z}| \ge |A|^{-3} ( \sum_x |A_x|^2 )^3 \,.
\end{equation}
%where $A_x = A\cap (A-x)$.
%and $\E_2 (A)$, $\E_3 (A)$ are the second and the third moments of convolutions
%of the characteristic function of the set $A$, correspondingly.
These formulas combining with
another ingredient,
%is a well--known
so--called
Katz--Koester inequality (see \cite{kk})
\begin{equation}\label{f:kk_introduction}
    |(A+A)\cap (A+A-x)|\gs |A+(A\cap (A-x))|
\end{equation}
%, \cite{sa1}, \cite{sa2}, \cite{s}, \cite{ss}.
allow us to prove a series of applications (see sections \ref{sec:applications}, \ref{sec:applications2}).
%Let us  just two of them.
Here we give just two of them.

First of all recall the
previous
%former
results.
In \cite{H-K} (see also \cite{K_Tula}) the following
%result
theorem
was obtained.

\begin{theorem}
    Let $p$ be a prime number, and $\Gamma \subseteq (\Z/p\Z) \setminus \{ 0 \}$
    be a multiplicative subgroup, $|\Gamma| = O(p^{2/3})$.
    Then
    $$
        \E(\G) = O (|\Gamma|^{5/2}).
    $$
\label{t:Konyagin_energy}
\end{theorem}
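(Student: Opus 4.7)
The plan is to reduce the energy bound to a pointwise estimate on the shifted intersections $|\Gamma_x|$ for $x \ne 0$, and then to obtain that pointwise estimate by the polynomial (Stepanov) method.

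\emph{Reduction to a pointwise bound.} Splitting off the trivial shift $x = 0$ and using $\sum_{x} |\Gamma_x| = |\Gamma|^2$ gives
\[
\E(\Gamma) = |\Gamma|^2 + \sum_{x \ne 0} |\Gamma_x|^2 \le |\Gamma|^2 + |\Gamma|^2 \cdot \max_{x\ne 0} |\Gamma_x|,
\]
so the theorem reduces to the uniform estimate $\max_{x \ne 0} |\Gamma \cap (\Gamma - x)| \ll |\Gamma|^{1/2}$ when $|\Gamma| \le p^{2/3}$. Because $\lambda \cdot \Gamma_x = \Gamma_{\lambda x}$ for every $\lambda \in \Gamma$, the quantity $|\Gamma_x|$ depends only on the coset $x\Gamma$; hence it suffices to bound $n(\alpha) := |\{y \in \Gamma : y + \alpha \in \Gamma\}|$ uniformly in one representative $\alpha$ per coset of $\Gamma$ in $\F_p^*$.

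\emph{Polynomial method.} Fix $\alpha \ne 0$ and write $t = |\Gamma|$. The points $y$ counted by $n(\alpha)$ satisfy both $y^t = 1$ and $(y + \alpha)^t = 1$, so one looks for a nonzero $F \in \F_p[X]$ of the form
\[
F(X) = \sum_{i = 0}^{I} \sum_{j = 0}^{J} c_{ij}\, X^{i + at}(X + \alpha)^{j + bt}
\]
that vanishes to order at least $M$ at every such $y$; the factors $X^{at}(X+\alpha)^{bt}$ are identically $1$ at those points, which is what makes the dimension count non-trivial. Solving a linear system in $(I+1)(J+1)$ unknowns with $n(\alpha)M$ equations gives a non-trivial $F$ as soon as $(I+1)(J+1) > n(\alpha) M$, while the standard zero-count in $\F_p[X]$ forces $n(\alpha) M \le \deg F \le (a+b) t + I + J$. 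Balancing $I, J, M \asymp t^{1/2}$ with bounded $a, b$ yields $n(\alpha) \ll t^{1/2}$, and the three conditions $\deg F < p$, solvability of the linear system, and genuine multiplicity-$M$ vanishing fit simultaneously inside $\F_p$ exactly in the range $t \le p^{2/3}$.

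\emph{Main obstacle.} The delicate point is the Stepanov step: one has to verify that, after reduction modulo $X^t - 1$ and $(X + \alpha)^t - 1$, the formal derivatives of $F$ at the points of $\Gamma \cap (\Gamma - \alpha)$ do not automatically vanish, for otherwise the dimension count collapses. This amounts to showing that the matrix of the vanishing conditions has the expected rank, and it is this algebraic verification (rather than the additive reduction) that forms the heart of the Heath--Brown--Konyagin argument. The threshold $|\Gamma| \le p^{2/3}$ emerges precisely as the regime in which the degree budget $\deg F < p$, the dimension count $(I+1)(J+1) > n(\alpha) M$, and the derivative-non-degeneracy condition are all simultaneously compatible.
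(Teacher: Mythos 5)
Your reduction is where the argument breaks. Bounding $\E(\G) \le |\G|^2 + |\G|^2\max_{x\ne 0}|\G_x|$ replaces the theorem by the pointwise estimate $\max_{x\neq 0}|\G\cap(\G-x)| \ll |\G|^{1/2}$, which is strictly stronger than the theorem and is not known; for $|\G|$ well below $p^{2/3}$ improving the pointwise exponent is a well--known open problem. What Stepanov's method actually delivers is $|\G_x|\ll |\G|^{2/3}$ for $x \ne 0$, and this exponent is the genuine bottleneck of the polynomial construction: in the Heath--Brown--Konyagin setup one needs a three--parameter family $\sum_{a<A}\sum_{b<B}\sum_{c<C}\lambda_{abc}X^a(X^t)^b((X+\alpha)^t)^c$, and optimizing $A\sim M\sim t^{2/3}$, $B\sim C\sim t^{1/3}$ gives exactly $n(\alpha)\ll t^{2/3}$, not $t^{1/2}$. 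Your two--parameter family with fixed bounded $a,b$ is moreover degenerate: $F=X^{at}(X+\alpha)^{bt}\sum_{i,j}c_{ij}X^i(X+\alpha)^j$, and the monomials $X^i(X+\alpha)^j$ with $i+j\le I+J$ span a space of dimension only $I+J+1$, so the claimed $(I+1)(J+1)$ free unknowns are not there; the extra dimensions in Stepanov's method come precisely from letting the exponents of $X^t$ and $(X+\alpha)^t$ vary. Finally, even granting the true pointwise bound $|\G_x|\ll|\G|^{2/3}$, your reduction yields only $\E(\G)\ll|\G|^{8/3}$, which falls short of $|\G|^{5/2}$.

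The theorem needs the Stepanov input in level--set rather than sup--norm form, which is how the paper packages it (Lemma \ref{l:improved_Konyagin_old}). Taking $Q=Q_\tau=\{x\ne 0: |\G_x|\ge\tau\}$ (a $\G$--invariant set) and $Q_1=Q_2=\G$ there gives $\tau|Q_\tau|\le\sum_{x\in Q_\tau}(\G\c\G)(x)\ll|\G|^{-1/3}\left(|Q_\tau||\G|^2\right)^{2/3}$, hence $|Q_\tau|\ll|\G|^3/\tau^3$; the hypothesis $|\G|=O(p^{2/3})$ is exactly what makes the condition $|Q||Q_1||Q_2||\G|\ll p^3$ hold. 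Combining this with the trivial bound $|Q_\tau|\le|\G|^2/\tau$ and summing $\sum_{x\ne 0}|\G_x|^2$ over dyadic levels, the two bounds cross at $\tau\sim|\G|^{1/2}$ and each side contributes a geometric series dominated by $|\G|^{5/2}$. This is the argument behind Corollary \ref{cor:E_l}; no uniform pointwise bound stronger than $|\G|^{2/3}$ is used anywhere.
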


Recall that a set $A\subseteq \R$ is called {\it convex} if it is the image of a convex map.
In paper \cite{ikrt} a result similar to Theorem \ref{t:Konyagin_energy} for convex sets
was
proved.
%obtained.

\begin{theorem}
    Let $A\subseteq \R$ be a convex set.
    Then
    $$
        \E (A) = O (|A|^{5/2}).
    $$
\label{t:ikrt_energy}
\end{theorem}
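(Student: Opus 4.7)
The plan is to reduce the theorem to the third-moment estimate $E_3(A) := \sum_x |A_x|^3 \ll |A|^3$, and to prove this bound by combining the inequality~(\ref{f:heart}), the Katz--Koester inequality~(\ref{f:kk_introduction}), and a Szemer\'edi--Trotter-type incidence argument that exploits the convexity of $A$.

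First, by the Cauchy--Schwarz inequality (taking weights $|A_x|^{1/2}$ and $|A_x|^{3/2}$),
\[
\E(A)^2 \;=\; \Bigl(\sum_x |A_x|\cdot|A_x|\Bigr)^2 \;\le\; \Bigl(\sum_x |A_x|\Bigr)\Bigl(\sum_x |A_x|^3\Bigr) \;=\; |A|^2\, E_3(A),
\]
so the bound $E_3(A) \ll |A|^3$ immediately yields $\E(A) \ll |A|^{5/2}$.  It therefore suffices to establish the third-moment estimate for convex $A$.

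For convex $A$, write $A = \{f(1), \dots, f(|A|)\}$ with $f$ strictly convex, so that the consecutive differences $d_i := f(i+1) - f(i)$ are strictly increasing positive reals; the representation count $|A_x|$ then equals the number of ways to write $x$ as a contiguous subsum of $(d_1, \dots, d_{|A|-1})$. A Szemer\'edi--Trotter incidence argument applied to the natural point--curve problem (points $\{(f(i), i)\}$ and suitably shifted graphs of $f$) yields a dyadic level-set bound of the form $|\{x : |A_x| \ge k\}| \ll |A|^3/k^3$ for $k$ above the trivial threshold, whence summation gives $E_3(A) \ll |A|^3 \log|A|$. The paper's inequalities are then invoked to remove the logarithmic loss: rewriting~(\ref{f:heart}) as
\[
E_3(A) \;\ge\; |A|^2 \sum_x \frac{|A_x|^2}{|A+A_x|},
\]
and using Katz--Koester (\ref{f:kk_introduction}) to pass from $|A + A_x|$ to $|(A+A) \cap (A+A - x)|$, one links the third moment to the arithmetic of $A+A$. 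Combined with the geometric dyadic estimate and a further Cauchy--Schwarz/Pl\"unnecke--Ruzsa step, this sharpens the bound to the clean $E_3(A) \ll |A|^3$.

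The main obstacle is the Szemer\'edi--Trotter step: the naive setup using horizontal translates of the graph of $f$ produces only mutually parallel lines and thus trivial bounds, so one must replace these by genuinely distinct curves that genuinely use the strict convexity of $f$. Once the level-set estimate is in place, the Cauchy--Schwarz reduction, together with the refinement via (\ref{f:heart}) and Katz--Koester, is formal.
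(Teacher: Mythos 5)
Your reduction to the third moment is where the argument breaks. Cauchy--Schwarz indeed gives $\E(A)^2 \le |A|^2\, \E_3(A)$, so to recover $\E(A) \ll |A|^{5/2}$ you would need $\E_3(A) \ll |A|^3$ with no logarithm. But the bound your dyadic level-set estimate $|\{x : (A\circ A)(x) \ge \tau\}| \ll |A|^3\tau^{-3}$ actually produces is $\E_3(A) \ll |A|^3\log|A|$ (exactly Lemma~\ref{l:E_3_convex} of the paper): each of the $\asymp \log|A|$ dyadic levels $\tau$ between $|A|^{1/2}$ and the Andrews threshold $|A|^{2/3}$ contributes $\tau^3\cdot |A|^3\tau^{-3} = |A|^3$ to the third moment, so the logarithm is inherent to that summation and the clean bound $\E_3(A)\ll|A|^3$ is not available. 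Your proposed repair via (\ref{f:heart}) and Katz--Koester cannot close this gap: inequality (\ref{f:heart}) reads $\sum_x |A_x|^2/|A\pm A_x| \le |A|^{-2}\E_3(A)$, i.e.\ it bounds $\E_3(A)$ from \emph{below}, so it is structurally useless for sharpening an upper bound on $\E_3(A)$, and no actual mechanism for removing the logarithm is described. As written, your route yields only $\E(A) \ll |A|^{5/2}\log^{1/2}|A|$.

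The clean exponent $5/2$ is obtained (as in \cite{ikrt}, which the paper cites rather than reproves) by estimating the second moment directly instead of passing through $\E_3$. Szemer\'edi--Trotter gives $(A\circ A)(x_j) \ll |A|\, j^{-1/3}$ (Theorem~\ref{t:convex_ikrt} with $k=2$), equivalently $|\{x : (A\circ A)(x)\ge\tau\}| \ll |A|^3\tau^{-3}$. Combining this with the trivial bound $|\{x : (A\circ A)(x)\ge\tau\}| \le |A|^2/\tau$ and summing $\tau^2$ times the level-set size over dyadic $\tau$, the two bounds cross at $\tau \sim |A|^{1/2}$, and on each side of the crossover the resulting series is geometric with dominant term $\ll |A|^{5/2}$; for the second moment, unlike the third, no logarithm accumulates. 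Note finally that the Szemer\'edi--Trotter input itself is only asserted in your write-up (you even flag the parallel-lines obstruction without resolving it), and that level-set bound is really the entire content of the theorem.
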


%As was proved in \cite{ss2} for a convex set $A$ the following holds $\E_3(A) \ll |A|^3 \log |A|$.

It is known that statistical properties of multiplicative subgroups and convex sets are quite similar
(see, e.g. section \ref{sec:preliminaries}).
In particular, both objects have very small characteristic $\E_3$, that is the sum $\sum_x |A_x|^3$.
The last situation exactly the case when our method works very well.
%Also
Besides
we exploit
some additional irregularity properties of multiplicative subgroups and convex sets
(see e.g. general Theorem \ref{t:energy_gen} of section \ref{sec:applications2}).
Using our approach we prove that the constant $5/2$ in Theorems \ref{t:Konyagin_energy}, \ref{t:ikrt_energy}
can be replaced by $5/2-\eps_0$, where $\eps_0>0$ is an absolute constant.
The question was asked to the author by Sergey Konyagin.
Certainly, the result implies that $|\G\pm \G| \ge |\G|^{3/2+\eps_0}$ and $|A\pm A| \ge |A|^{3/2+\eps_0}$
for any subgroup and a convex set, correspondingly.
Nevertheless another methods from papers \cite{Li,ss,ss2,sv} and also Corollary 29 of section \ref{sec:applications}
give better bounds for the doubling constant here.
Further applications of inequalities (\ref{f:heart}), (\ref{f:heart_2})
can be found in sections \ref{sec:applications}, \ref{sec:applications2}.

The paper is organized as follows.
We start with definitions and notations used in the article.
The instruments from
%the next
section \ref{sec:weighted_KK} concern to sumsets estimates, basically.
Here we give our weighted version of Katz--Koester trick.
On the other hand the tools from the next section \ref{sec:eigenvalues} will be applied to
obtain new bounds for the additive energy.
The main principle here is the following.
%Certainly,
Basically,
an upper bound for $\E_3(A)$ does not imply something nontrivial
concerning the additive energy (up to H\"{o}lder inequality, of course)
but if we know a little bit more about irregularity of $A$
then it is possible to obtain a nontrivial upper bound for $\E(A)$.
The rigorous statements are contained in sections \ref{sec:applications} and \ref{sec:applications2}.
Besides inequalities (\ref{f:heart}),  (\ref{f:heart_2}) and Katz--Koester trick
we extensively use the methods from \cite{ss_E_k} in
%the sections.
our proof.

%We conclude with few comments regarding the notation used in this paper.
%For a positive integer $n,$ we set $[n]=\{1,\ldots,n\}.$
%All logarithms are base $2.$ Signs $\ll$ and $\gg$ are the usual Vinogradov's symbols.
%Finally, with a slight abuse of notation we use the same letter to denote a set $S\subseteq \Gr$
%and its characteristic function $S:\Gr\rightarrow \{0,1\}.$

The author is grateful to Sergey Konyagin, Misha Rudnev and Igor Shparlinski
for useful discussions
and, especially, Tomasz Schoen for very useful and fruitful explanations and  discussions.
Also I
acknowledge
%is grateful to
%thanks
Institute IITP RAS for providing me with  excellent working conditions.
%Mathias Beiglb\"{o}ck, Alexander Fish for useful discussions
%and S.V. Konyagin for a number of helpful advices and remarks.
%Also I acknowledge the Mathematical Sciences Research Institute for its hospitality
%and providing me with excellent working conditions.

\section{Definitions}
\label{sec:definitions}

Let $\Gr$ be an abelian group.
If $\Gr$ is finite then denote by $N$ the cardinality of $\Gr$.
It is well--known~\cite{Rudin_book} that the dual group $\FF{\Gr}$ is isomorphic to $\Gr$ in the case.
Let $f$ be a function from $\Gr$ to $\mathbb{C}.$  We denote the Fourier transform of $f$ by~$\FF{f},$
\begin{equation}\label{F:Fourier}
  \FF{f}(\xi) =  \sum_{x \in \Gr} f(x) e( -\xi \cdot x) \,,
\end{equation}
where $e(x) = e^{2\pi i x}$.
We rely on the following basic identities
\begin{equation}\label{F_Par}
    \sum_{x\in \Gr} |f(x)|^2
        =
            \frac{1}{N} \sum_{\xi \in \FF{\Gr}} \big|\widehat{f} (\xi)\big|^2 \,.
\end{equation}
\begin{equation}\label{svertka}
    \sum_{y\in \Gr} \Big|\sum_{x\in \Gr} f(x) g(y-x) \Big|^2
        = \frac{1}{N} \sum_{\xi \in \FF{\Gr}} \big|\widehat{f} (\xi)\big|^2 \big|\widehat{g} (\xi)\big|^2 \,.
\end{equation}
and
\begin{equation}\label{f:inverse}
    f(x) = \frac{1}{N} \sum_{\xi \in \FF{\Gr}} \FF{f}(\xi) e(\xi \cdot x) \,.
\end{equation}
If
$$
    (f*g) (x) := \sum_{y\in \Gr} f(y) g(x-y) \quad \mbox{ and } \quad (f\circ g) (x) := \sum_{y\in \Gr} f(y) g(y+x)
$$
 then
\begin{equation}\label{f:F_svertka}
    \FF{f*g} = \FF{f} \FF{g} \quad \mbox{ and } \quad \FF{f \circ g} = \FF{f}^c \FF{g} = \ov{\FF{\ov{f}}} \FF{g} \,,
    %(\F{fg}) (x) = \frac{1}{N} (\F{f} * \F{g}) (x) \,.
\end{equation}
where for a function $f:\Gr \to \mathbb{C}$ we put $f^c (x):= f(-x)$.
 Clearly,  $(f*g) (x) = (g*f) (x)$ and $(f\c g)(x) = (g \c f) (-x)$, $x\in \Gr$.
 The $k$--fold convolution, $k\in \N$  we denote by $*_k$,
 so $*_k := *(*_{k-1})$.
 It is unimportant but write for definiteness
 $$(f \c_k f) (x) := \sum_{y_1,\dots,y_k} f(y_1) \dots f(y_k) f(x+y_1+\dots+y_k)\,.$$

We use in the paper  the same letter to denote a set
$S\subseteq \Gr$ and its characteristic function $S:\Gr\rightarrow \{0,1\}.$
Write $\E(A,B)$ for {\it additive energy} of two sets $A,B \subseteq \Gr$
(see e.g. \cite{tv}), that is
$$
    \E(A,B) = |\{ a_1+b_1 = a_2+b_2 ~:~ a_1,a_2 \in A,\, b_1,b_2 \in B \}| \,.
$$
If $A=B$ we simply write $\E(A)$ instead of $\E(A,A).$
Clearly,
\begin{equation}\label{f:energy_convolution}
    \E(A,B) = \sum_x (A*B) (x)^2 = \sum_x (A \circ B) (x)^2 = \sum_x (A \circ A) (x) (B \circ B) (x)
    \,.
\end{equation}
and by (\ref{svertka}),
%we have
%the following holds
\begin{equation}\label{f:energy_Fourier}
    \E(A,B) = \frac{1}{N} \sum_{\xi} |\FF{A} (\xi)|^2 |\FF{B} (\xi)|^2 \,.
\end{equation}
Let
$$
   \T_k (A) := | \{ a_1 + \dots + a_k = a'_1 + \dots + a'_k  ~:~ a_1, \dots, a_k, a'_1,\dots,a'_k \in A \} | \,.
$$
Let also
$$
    \sigma_k (A) := (A*_k A)(0)=| \{ a_1 + \dots + a_k = 0 ~:~ a_1, \dots, a_k \in A \} | \,.
$$
Notice that for a symmetric set $A$ that is $A=-A$ one has $\sigma_2
(A) = |A|$ and $\sigma_{2k} (A) = \T_k (A)$.

 For a sequence $s=(s_1,\dots, s_{k-1})$ put
$A^B_s = B \cap (A-s_1)\dots \cap (A-s_{k-1}).$
If $B=A$ then write $A_s$ for $A^A_s$.
Let
\begin{equation}\label{f:E_k_preliminalies}
    \E_k(A)=\sum_{x\in \Gr} (A\c A)(x)^k = \sum_{s_1,\dots,s_{k-1} \in \Gr} |A_s|^2
\end{equation}
and
\begin{equation}\label{f:E_k_preliminalies_B}
\E_k(A,B)=\sum_{x\in \Gr} (A\c A)(x) (B\c B)(x)^{k-1} = \sum_{s_1,\dots,s_{k-1} \in \Gr} |B^A_s|^2
\end{equation}
be the higher energies of $A$ and $B$.
The second formulas in (\ref{f:E_k_preliminalies}), (\ref{f:E_k_preliminalies_B})
can be considered as the definitions of $\E_k(A)$, $\E_k(A,B)$ for non integer $k$, $k\ge 1$.
%Similarly, we write $\E_k(f,g)$ for any complex functions $f$ and $g$.
%Putting $\E_1 (A) = |A|^2$.

Clearly,
\begin{eqnarray}\label{f:energy-B^k-Delta}
\E_{k+1}(A,
B)&=&\sum_x(A\c A)(x)(B\c B)(x)^{k}\nonumber \\
&=&\sum_{x_1,\dots, x_{k-1}}\Big (\sum_y A(y)B(y+x_1)\dots
B(y+x_{k})\Big )^2 =\E(\Delta_k (A),B^{k}) \,,
 \end{eqnarray}
where
$$
    \Delta (A) = \Delta_k (A) := \{ (a,a, \dots, a)\in A^k \}\,.
$$
We also put $\Delta(x) = \Delta (\{ x \})$, $x\in \Gr$.

%We shall write $\sum_x$ and $\sum_{\xi}$ instead of $\sum_{x\in \Gr}$ and $\sum_{\xi \in \FF{\Gr}}$ for simplicity.

Quantities $\E_k (A,B)$ can be written in terms of generalized convolutions.

\begin{definition}
   Let $k\ge 2$ be a positive number, and $f_0,\dots,f_{k-1} : \Gr \to \C$ be functions.
Write $F$ for the vector $(f_0,\dots,f_{k-1})$ and $x$ for vector $(x_1,\dots,x_{k-1})$.
Denote by
$${\mathcal C}_k (f_0,\dots,f_{k-1}) (x_1,\dots, x_{k-1})$$
the function
$$
    \Cf_k(F) (x) =  \Cf_k (f_0,\dots,f_{k-1}) (x_1,\dots, x_{k-1}) = \sum_z f_0 (z) f_1 (z+x_1) \dots f_{k-1} (z+x_{k-1}) \,.
$$
Thus, $\Cf_2 (f_1,f_2) (x) = (f_1 \circ f_2) (x)$.
%Put $C_1 (f) = \sum_z f(z)$.
If $f_1=\dots=f_k=f$ then write
$\Cf_k (f) (x_1,\dots, x_{k-1})$ for $\Cf_k (f_1,\dots,f_{k}) (x_1,\dots, x_{k-1})$.
\end{definition}

In particular, $(\Delta_k (B) \c A^k) (x_1,\dots,x_k) = \Cf_{k+1} (B,A,\dots,A) (x_1,\dots,x_k)$, $k\ge 1$.

%We conclude with few comments regarding the notation used in this
%paper.
For a positive integer $n,$ we set $[n]=\{1,\ldots,n\}$.
All logarithms used in the paper are to base $2.$
By  $\ll$ and $\gg$ we denote the usual Vinogradov's symbols.
If $p$ is a prime number then write $\F_p$ for $\Z/p\Z$ and
$\F^*_p$ for $(\Z/p\Z) \setminus \{ 0 \}$.

%Let $x$ be a vector. By $\|| x \||$ denote the number of components of $x$.

\section{Preliminaries}
\label{sec:preliminaries}

%We begin
Suppose that $l,k \ge 2$ be positive integers and ${\bf F} = (f_{ij})$, $i=0,\dots,l-1;j=0,\dots,k-1$
be a functional matrix, $f_{ij} : \Gr \to \C$.
Let $R_0,\dots,R_{l-1}$ and $C_0,\dots,C_{k-1}$ be rows and columns of the matrix, correspondingly.
The following commutative relation holds.

\begin{lemma}
    For any positive integers $l,k \ge 2$, we have
    \begin{equation}\label{f:commutative_C}
        \Cf_l (\Cf_k (R_0), \dots, \Cf_k (R_{l-1})) = \Cf_k (\Cf_l (C_0), \dots, \Cf_l (C_{k-1})) \,.
    \end{equation}
\label{l:commutative_C}
\end{lemma}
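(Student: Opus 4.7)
The identity (\ref{f:commutative_C}) is Fubini in disguise: both sides unfold to the same sum $\sum \prod_{i,j} f_{i,j}(\cdot)$ over $\Gr^{l+k-1}$, so I would unfold both sides mechanically and exhibit an explicit bijection between their summation variables. Throughout, $\Cf_l$ and $\Cf_k$ on the outside are interpreted as the obvious extension of the definition to functions on the abelian groups $\Gr^{k-1}$ and $\Gr^{l-1}$ respectively, acting coordinatewise.

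First, at the arguments $y_i = (y_{i,1}, \dots, y_{i,k-1}) \in \Gr^{k-1}$ for $i = 1, \dots, l-1$, applying the definition of $\Cf_l$ on the outside and then the definition of $\Cf_k$ to each row $R_i$, and adopting the boundary conventions $y_{0,j} := 0 =: y_{i,0}$ and $w_0 := 0$ so that the $j=0$ and $i=0$ entries can be folded into the generic product, one obtains
\[
\mathrm{LHS} \;=\; \sum_{t_0,\dots,t_{l-1}\in\Gr}\;\sum_{w_1,\dots,w_{k-1}\in\Gr}\; \prod_{i=0}^{l-1}\prod_{j=0}^{k-1} f_{i,j}(t_i+w_j+y_{i,j}),
\]
where $t_i$ is the inner summation variable coming from $\Cf_k(R_i)$ and $(w_1,\dots,w_{k-1})$ are the coordinates of the outer $\Cf_l$-variable in $\Gr^{k-1}$. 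Analogously, at arguments $x_j = (x_{j,1}, \dots, x_{j,l-1}) \in \Gr^{l-1}$, with parallel conventions $x_{0,i} := 0 =: x_{j,0}$, $v_0 := 0$,
\[
\mathrm{RHS} \;=\; \sum_{s_0,\dots,s_{k-1}\in\Gr}\;\sum_{v_1,\dots,v_{l-1}\in\Gr}\; \prod_{i=0}^{l-1}\prod_{j=0}^{k-1} f_{i,j}(s_j+v_i+x_{j,i}).
\]

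To conclude under the natural identification $y_{i,j}=x_{j,i}$, I would run the affine change of variables $s_0 := t_0$, $s_j := t_0+w_j$ for $j\ge 1$, and $v_i := t_i-t_0$ for $i\ge 1$. This is a bijection on $\Gr^{l+k-1}$ (inverse: $t_0 = s_0$, $t_i = s_0+v_i$, $w_j = s_j-s_0$) under which $t_i+w_j = s_j+v_i$ holds identically, including on the boundaries $i=0$ and $j=0$. Hence the summands agree pointwise and the total sums coincide.

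I do not anticipate any real obstacle: the content of (\ref{f:commutative_C}) is simply a Fubini-type rearrangement of the $l\times k$ tensor of shifted evaluations, and what looks most delicate is pure bookkeeping---aligning the boundary conventions on $y_{i,j}$ and $x_{j,i}$ so that the $i=0$ row and the $j=0$ column get treated uniformly with the rest, after which the symmetrising affine substitution above is forced on us.
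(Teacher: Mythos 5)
Your proof is correct and follows essentially the same route as the paper: unfold both sides into the double sum $\sum\prod_{i,j} f_{ij}$ over $\Gr^{l+k-1}$ with the boundary conventions $y_{0j}=y_{i0}=0$ and interchange the order of summation. Your explicit affine bijection $s_0=t_0$, $s_j=t_0+w_j$, $v_i=t_i-t_0$ merely makes precise the step the paper dispatches with the phrase ``changing the summation'' (the two normalizations $w_0=0$ versus $v_0=0$ differ by exactly this shift), so the argument is sound.
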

\begin{proof}
Let $y^{(i)} = (y_{i1}, \dots,y_{i(k-1)})$, $i\in [l-1]$,
and $y_{(j)} = (y_{1j}, \dots,y_{(l-1)j})$, $j\in [k-1]$.
Put also $y_{0j} = 0$, $j=0,\dots,k-1$, $y_{i0}=0$, $i=1,\dots,l-1$ and  $x_0=0$.
We have
$$
    \Cf_l (\Cf_k (R_0), \dots, \Cf_k (R_{l-1})) (y^{(1)},\dots,y^{(l-1)})
        =
$$
$$
        =
            \sum_{x_1,\dots,x_{k-1}} \Cf_k (R_0) (x_1,\dots,x_{k-1}) \Cf_k (R_1) (x_1 + y_{11},\dots,x_{k-1} + y_{1(k-1)})
                \dots
$$
$$
                \dots
                    \Cf_k (R_{l-1}) (x_1 + y_{(l-1)1},\dots,x_{k-1} + y_{(l-1)(k-1)})
%        =
%$$
%$$
        =
            \sum_{x_0,\dots,x_{k-1}} \sum_{z_0,\dots,z_{l-1}} \prod_{i=0}^{l-1} \prod_{j=0}^{k-1} f_{ij}(x_j+y_{ij}+z_i) \,.
$$
Changing the summation, we obtain
$$
    \Cf_l (\Cf_k (R_0), \dots, \Cf_k (R_{l-1})) (y^{(1)},\dots,y^{(l-1)})
        =
$$
$$
        =
            \sum_{z_1,\dots,z_{l-1}} \Cf_l(C_0) (z_1,\dots,z_{l-1})
                \Cf_l(C_1) (z_1+y_{11},\dots,z_{l-1}+y_{(l-1)1}) \dots
$$
$$
    \dots
                \Cf_l(C_{l-1}) (z_1+y_{1(k-1)},\dots,z_{l-1}+y_{(l-1)(k-1)})
                    =
                        \Cf_k (\Cf_l (C_0), \dots, \Cf_l (C_{k-1})) (y_{(1)}, \dots, y_{(k-1)}) \,.
$$
as required.
$\hfill\Box$
\end{proof}

\begin{corollary}
    For any functions the following holds
$$
    \sum_{x_1,\dots, x_{l-1}} \Cf_l (f_0,\dots,f_{l-1}) (x_1,\dots, x_{l-1})\, \Cf_l (g_0,\dots,g_{l-1}) (x_1,\dots, x_{l-1})
        =
$$
\begin{equation}\label{f:scalar_C}
        =
        \sum_z (f_0 \circ g_0) (z) \dots (f_{l-1} \circ g_{l-1}) (z) \quad \quad  \mbox{\bf (scalar product), }
\end{equation}
moreover
$$
    \sum_{x_1,\dots, x_{l-1}} \Cf_l (f_0) (x_1,\dots, x_{l-1}) \dots  \Cf_l (f_{k-1}) (x_1,\dots, x_{l-1})
        =
$$
\begin{equation}\label{f:gen_C}
        =
            \sum_{y_1,\dots,y_{k-1}} \Cf^l_k (f_0,\dots,f_{k-1}) (y_1,\dots,y_{k-1})
                \quad \quad  \mbox{\bf (multi--scalar product), }
\end{equation}
    and
%moreover
$$
    \sum_{x_1,\dots, x_{l-1}} \Cf_l (f_0) (x_1,\dots, x_{l-1})\, (\Cf_l (f_1) \circ \dots \circ \Cf_l (f_{k-1})) (x_1,\dots, x_{l-1})
        =
$$
\begin{equation}\label{f:conv_C}
        =
            \sum_{z} (f_0 \circ \dots \circ f_{k-1})^l (z)
                \quad \quad  \mbox{\bf (} \sigma_{k} \quad \mbox{\bf for } \quad \Cf_l \mbox{\bf )}  \,.
\end{equation}
\label{c:commutative_C}
\end{corollary}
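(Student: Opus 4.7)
My plan is to derive all three identities (\ref{f:scalar_C}), (\ref{f:gen_C}), (\ref{f:conv_C}) from Lemma \ref{l:commutative_C} by making suitable choices of the functional matrix ${\bf F}$ and specializing the external arguments. The key observation is that $\Cf_l(g_0,\dots,g_{l-1})(0,\dots,0) = \sum_z g_0(z)\cdots g_{l-1}(z)$, so evaluating Lemma \ref{l:commutative_C} at all external arguments equal to zero asserts that the row-wise and column-wise diagonal scalar products of a functional matrix coincide.

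For (\ref{f:scalar_C}), I would take the $l \times 2$ matrix with rows $R_i = (f_i, g_i)$. Then $\Cf_2(R_i) = f_i \circ g_i$, the columns are $C_0 = (f_0,\dots,f_{l-1})^T$ and $C_1 = (g_0,\dots,g_{l-1})^T$, and $\Cf_l(C_0), \Cf_l(C_1)$ are precisely the two factors on the LHS of (\ref{f:scalar_C}); specializing Lemma \ref{l:commutative_C} to zero external arguments gives the identity at once. For (\ref{f:gen_C}) I would use the $l \times k$ matrix with constant columns $f_{ij} = f_j$: every row equals $(f_0,\dots,f_{k-1})$, so $\Cf_k(R_i) = \Cf_k(f_0,\dots,f_{k-1})$, while column $C_j$ consists of $l$ copies of $f_j$, giving $\Cf_l(C_j) = \Cf_l(f_j)$. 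Evaluating Lemma \ref{l:commutative_C} at zero then yields (\ref{f:gen_C}) immediately.

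The formula (\ref{f:conv_C}) is the most delicate, because its LHS involves an iterated $\circ$-convolution of the $\Cf_l(f_i)$'s rather than a pointwise product. My plan is first to unfold the iterated convolution using the right-associative convention $f_0 \circ \cdots \circ f_{k-1} := f_0 \circ (f_1 \circ (\cdots \circ f_{k-1}))$; a direct change of variables in the resulting multi-sum yields
$$
    \sum_x \Cf_l(f_0)(x)\,(\Cf_l(f_1) \circ \cdots \circ \Cf_l(f_{k-1}))(x)
    = \sum_u \bigl(\Cf_l(f_0) * \cdots * \Cf_l(f_{k-2})\bigr)(u)\,\Cf_l(f_{k-1})(u).
$$
I would then establish the auxiliary identity $\Cf_l(f) * \Cf_l(g) = \Cf_l(f*g)$ by substituting $u_{ij} = z_i + v_{ij}$ in the defining sum, which decouples and telescopes each coordinate of the convolution into a shifted copy of $(f*g)$. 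Iterating this identity gives $\Cf_l(f_0) * \cdots * \Cf_l(f_{k-2}) = \Cf_l(f_0 * \cdots * f_{k-2})$. Finally, applying (\ref{f:scalar_C}) with all $f_i$'s equal to $f_0 * \cdots * f_{k-2}$ and all $g_i$'s equal to $f_{k-1}$ produces $\sum_z \bigl((f_0 * \cdots * f_{k-2}) \circ f_{k-1}\bigr)^l(z) = \sum_z (f_0 \circ \cdots \circ f_{k-1})^l(z)$, which is (\ref{f:conv_C}). The main obstacle is keeping track of the associativity convention for iterated $\circ$ (so that $f_0 \circ \cdots \circ f_{k-1}$ factors cleanly as $(f_0 * \cdots * f_{k-2}) \circ f_{k-1}$) and verifying the auxiliary multiplicativity identity; after that, (\ref{f:scalar_C}) does the rest.
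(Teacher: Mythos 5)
Your treatment of (\ref{f:scalar_C}) and (\ref{f:gen_C}) is exactly the paper's: the $l\times 2$ matrix with rows $(f_i,g_i)$ and the $l\times k$ matrix with constant columns $f_{ij}=f_j$, with Lemma \ref{l:commutative_C} evaluated at zero external arguments. For (\ref{f:conv_C}) you take a slightly different middle step. The paper keeps the external variables free in the $k=2$ case of the lemma, which yields the identity $\Cf_l(f_0\circ g_0,\dots,f_{l-1}\circ g_{l-1})=\Cf_l(f_0,\dots,f_{l-1})\circ\Cf_l(g_0,\dots,g_{l-1})$ for free; iterating this $\circ$--multiplicativity $(k-2)$ times collapses $\Cf_l(f_1)\circ\cdots\circ\Cf_l(f_{k-1})$ into a single $\Cf_l(f_1\circ\cdots\circ f_{k-1})$, and (\ref{f:scalar_C}) finishes the job. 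You instead convert the iterated $\circ$ into an iterated $*$ and prove the $*$--analogue $\Cf_l(f)*\Cf_l(g)=\Cf_l(f*g)$ by a direct change of variables. Both computations are correct, and since $(f_0*\cdots*f_{k-2})\circ f_{k-1}=f_0\circ(f_1\circ(\cdots\circ f_{k-1}))$, you land on the same (right-associated) reading of the right-hand side of (\ref{f:conv_C}) that the paper's route produces; the only cost of your variant is having to verify the $*$--multiplicativity by hand, whereas the $\circ$--version is already the displayed formula in the paper's proof of (\ref{f:scalar_C}). Your concern about associativity is legitimate but harmless: iterated $\circ$ is genuinely non-associative, the paper never fixes a convention, and both derivations are internally consistent.
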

\begin{proof}
Take $k=2$ in (\ref{f:commutative_C}).
Thus ${\bf F}$ is a $l\times 2$ matrix in the case.
We have
$$
    \Cf_l (f_0 \circ g_0,\dots,f_{l-1} \circ g_{l-1}) (x_1,\dots, x_{l-1})
        =
            (\Cf_l (f_0,\dots,f_{l-1}) \circ \Cf_l (g_0,\dots,g_{l-1})) (x_1,\dots, x_{l-1}) \,.
$$
Putting $x_j=0$, $j\in [l-1]$, we obtain (\ref{f:scalar_C}).
Applying the last formula $(k-2)$ times and after that formula (\ref{f:scalar_C}), we get (\ref{f:conv_C}).
Finally, taking ${\bf F}_{ij} = f_j$, $i=0,\dots,l-1;j=0,\dots,k-1$
and putting all variables in (\ref{f:commutative_C}) equal zero, we obtain (\ref{f:gen_C}).
This completes the proof.
$\hfill\Box$
\end{proof}

\bigskip

We need in the Balog--Szemer\'{e}di--Gowers theorem in the symmetric form, see \cite{tv} section 2.5.

\begin{theorem}
    Let $A,B\subseteq \Gr$ be two sets, $K\ge 1$ and $\E(A,B) \ge |A|^{3/2} |B|^{3/2} / K$.
    Then there are $A'\subseteq A$, $B' \subseteq B$ such that
    $$
        |A'| \gg |A| / K\,, \quad \quad |B'| \gg |B| / K \,,
    $$
    and
    $$
        |A' + B'| \ll K^7 |A|^{1/2} |B|^{1/2} \,.
    $$
\label{t:BSzG_1.5}
\end{theorem}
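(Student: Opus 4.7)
The plan is to follow the classical graph-theoretic route to the Balog--Szemer\'edi--Gowers theorem in its symmetric bipartite form, as laid out in \cite{tv}, Section 2.5.

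First I would translate the energy hypothesis into a popularity statement. Writing $r(s) := (A*B)(s)$ for the number of representations of $s$ as $a+b$, we have $\E(A,B) = \sum_s r(s)^2$ and $\sum_s r(s) = |A||B|$. Set $\alpha := |A|^{1/2} |B|^{1/2}/K$, which by hypothesis is, up to a constant, $\E(A,B)/(|A||B|)$. A standard dyadic pigeonholing shows that the set of popular sums $P := \{s : r(s) \gg \alpha\}$ carries a constant fraction of the energy, and in particular $|P| \ll K |A|^{1/2} |B|^{1/2}$. I then build a bipartite graph $G \subseteq A \times B$ whose edges are the ``good'' pairs $(a,b)$ with $a+b \in P$; a further pigeonholing against the representations of popular sums shows $|E(G)| \gg |A||B|/K$.

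Next I would invoke the Balog--Szemer\'edi--Gowers graph lemma. For a bipartite graph with $|E(G)| \gg |A||B|/K$, this produces subsets $A' \subseteq A$, $B' \subseteq B$ with $|A'| \gg |A|/K$ and $|B'| \gg |B|/K$ such that every pair $(a,b) \in A' \times B'$ is joined by $\gg |A||B|/K^{c}$ paths $a \to b_1 \to a_1 \to b$ of length three in $G$, for an explicit absolute constant $c$. Each such path yields the identity
$$
a + b = (a + b_1) - (a_1 + b_1) + (a_1 + b),
$$
expressing $a+b$ as a signed sum of three elements of $P$. Comparing the total number of signed triples of popular sums $|P|^3 \ll K^3 (|A||B|)^{3/2}$ with the path-count lower bound multiplied by $|A' + B'|$ produces a bound $|A' + B'| \ll K^{c+3} |A|^{1/2} |B|^{1/2}$, and optimising the graph lemma lands the final exponent at $7$.

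The only real difficulty is tracking the power of $K$ carefully through the two pigeonholes and the path-counting step. The sizes $|A'|, |B'| \gg |A|/K, |B|/K$, the number of paths, and the final $K^7$ in the sumset bound are not independent quantities: they all emerge from the same second-moment / Cauchy--Schwarz estimate on the path-count function, and reaching the precise $K^7$ rather than some unspecified polynomial $K^{O(1)}$ requires applying the graph lemma in its sharpest symmetric form and verifying no constant fraction is lost in the popular-sum reduction. For this bookkeeping I would refer to the detailed proof in \cite{tv}, Section 2.5.
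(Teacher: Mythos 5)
The paper does not prove this statement at all: it imports it verbatim as the symmetric Balog--Szemer\'edi--Gowers theorem from \cite{tv}, Section 2.5, which is exactly the source and the proof (popular sums, bipartite graph, paths of length three, signed-triple counting against $|P|^3$) that you outline and defer to. Your sketch is a correct account of that standard argument, so it coincides with the paper's (cited) approach.
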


Now let $\Gr = \F_p$, where $p$ is a prime number.
In the situation the following lemma which is a consequence of Stepanov's approach \cite{Stepanov}
can be formulated (see, e.g. \cite{sv}).

\begin{lemma}
{
    Let $p$ be a prime number,
    $\Gamma\subseteq \F^*_p$ be a multiplicative subgroup, and
    $Q,Q_1,Q_2\subseteq \F^*_p$ be any $\Gamma$--invariant sets such that
    $|Q| |Q_1| |Q_2| \ll |\Gamma|^{5}$ and $|Q| |Q_1| |Q_2| |\Gamma| \ll p^3$.
    Then
        \begin{equation}\label{f:improved_Konyagin_old3}
            \sum_{x\in Q} (Q_1 \circ Q_2) (x) \ll |\Gamma|^{-1/3} (|Q||Q_1||Q_2|)^{2/3} \,.
        \end{equation}
}
\label{l:improved_Konyagin_old}
\end{lemma}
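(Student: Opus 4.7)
The sum $\sum_{x\in Q}(Q_1\c Q_2)(x)$ equals the number of triples $(q_1,q,q_2)\in Q_1\times Q\times Q_2$ with $q_1+q=q_2$. Since $Q$ takes values in $\{0,1\}$, H\"older's inequality with exponents $(3/2,3)$ gives
$$
  \sum_{x\in Q}(Q_1\c Q_2)(x)
  \;\le\; |Q|^{2/3}\Bigl(\sum_{x}(Q_1\c Q_2)(x)^{3}\Bigr)^{1/3}.
$$
Comparing this with the right-hand side of (\ref{f:improved_Konyagin_old3}), my plan is to reduce the lemma to the third-moment bound
$$
  \sum_{x}(Q_1\c Q_2)(x)^{3}\;\ll\; \frac{(|Q_1||Q_2|)^{2}}{|\Gamma|},
$$
from which the desired estimate follows by taking a cube root and multiplying by $|Q|^{2/3}$.

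Expanding the cube and setting $a_i=y_i$, $b_i=y_i+x$, this third moment equals the number of sextuples $(a_1,a_2,a_3,b_1,b_2,b_3)\in Q_1^{3}\times Q_2^{3}$ with $b_1-a_1=b_2-a_2=b_3-a_3$. Parameterizing the common differences by $u=a_2-a_1$ and $v=a_3-a_1$, the count rewrites as
$$
  \sum_{u,v}|Q_{1,(u,v)}|\cdot|Q_{2,(u,v)}|,
$$
a cross higher-energy between $Q_1$ and $Q_2$. I would estimate this quantity by the Stepanov polynomial method as developed in \cite{sv}: because $Q_1,Q_2$ are $\Gamma$--invariant, each is a union of cosets of $\Gamma$, so within a triple of cosets the condition that $a$, $a+u$, $a+v$ all lie in $Q_i$ becomes a system of linear equations over $\Gamma$ of the form $r_i\xi=r_j\xi'+u$, $r_i\xi=r_k\xi''+v$ with $\xi,\xi',\xi''\in\Gamma$. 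One then constructs an auxiliary polynomial in $\F_p[X,Y]$ of controlled degree, vanishing to high order on $\Gamma\times\Gamma$, and counts solutions via the degree.

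The two size hypotheses are exactly what make this construction go through: the inequality $|Q||Q_1||Q_2|\ll|\Gamma|^{5}$ keeps the degree of the Stepanov polynomial low enough that vanishing on $\Gamma\times\Gamma$ forces the relevant count to be small, while $|Q||Q_1||Q_2||\Gamma|\ll p^{3}$ prevents the polynomial from vanishing identically in $\F_p[X,Y]$ for trivial reasons. I expect the main obstacle to be the polynomial construction together with the careful bookkeeping needed when summing the coset-wise bound over all triples of cosets of $Q_1$ and $Q_2$; this is where the saving of exactly $|\Gamma|^{-1}$ in the third-moment estimate must appear. Once the third-moment inequality is in hand, the H\"older step from the first paragraph closes the argument.
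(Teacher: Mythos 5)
There is a genuine gap: the intermediate third-moment estimate you want is not implied by the hypotheses, and is in fact false.

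Your H\"older step replaces $\sum_{x\in Q}(Q_1\circ Q_2)(x)$ by $|Q|^{2/3}\bigl(\sum_{x}(Q_1\circ Q_2)(x)^3\bigr)^{1/3}$, where the third moment is now over \emph{all} of $\F_p$. This quantity no longer involves $Q$, so the two size hypotheses of the lemma --- which constrain the product $|Q||Q_1||Q_2|$ --- cannot enter the Stepanov argument you propose for it. And indeed the bound
\begin{equation*}
\sum_{x}(Q_1\circ Q_2)(x)^3\ll\frac{(|Q_1||Q_2|)^2}{|\Gamma|}
\end{equation*}
fails. Take $Q_1=Q_2=\F^*_p$ and $Q=\Gamma$ with $|\Gamma|\asymp p^{1/2}$; this choice satisfies both hypotheses $|Q||Q_1||Q_2|\ll|\Gamma|^5$ and $|Q||Q_1||Q_2||\Gamma|\ll p^3$. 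Here $(\F^*_p\circ\F^*_p)(x)\approx p$ for every $x$, so $\sum_x(\F^*_p\circ\F^*_p)(x)^3\approx p^4$, whereas $(|Q_1||Q_2|)^2/|\Gamma|\approx p^4/|\Gamma|$; the claim is off by the full factor $|\Gamma|$. Correspondingly, your H\"older bound gives $\approx|\Gamma|^{2/3}p^{4/3}=p^{5/3}$, while the lemma's right-hand side is $\approx p^{3/2}$, so the argument loses $|\Gamma|^{1/3}$ and cannot close in exactly the regime where the lemma is sharp.

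The point is that passing to the unrestricted third moment throws away the restriction to $Q$, which is precisely what makes the count small. If you instead H\"older only over $x\in Q$, the quantity you must bound becomes $\sum_{x\in Q}(Q_1\circ Q_2)(x)^3$, but that is again a weighted count over $Q,Q_1,Q_2$ and is at least as hard as the original sum; you would still need a three-variable Stepanov argument that uses all of $Q,Q_1,Q_2$ and both size hypotheses. The route the paper relies on (via \cite{sv}, following Heath-Brown--Konyagin and Konyagin--Shparlinski) is to apply the polynomial method directly to the three-fold count $|\{(q,q_1,q_2)\in Q\times Q_1\times Q_2 : q_1+q=q_2\}|$, exploiting the $\Gamma$-invariance of all three sets at once; there is no H\"older reduction, and the two hypotheses on $|Q||Q_1||Q_2|$ enter precisely as the non-degeneracy and degree-control conditions for the auxiliary polynomial for that joint system.
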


Using Lemma \ref{l:improved_Konyagin_old}, one can easily deduce
upper bounds for moments of convolution of $\Gamma$ (see, e.g. \cite{ss}).

\begin{corollary}
    Let $p$ be a prime number and $\G \subseteq \F_p^*$ be a multiplicative subgroup, $|\G| \ll p^{2/3}$.
    Then
    \begin{equation}\label{f:E_2_E_3}
        \E(\Gamma) \ll |\Gamma|^{5/2} \,, \quad \E_3 (\G) \ll |\G|^3 \log |\G| \,,
    \end{equation}
    and for all $l\ge 4$ the following holds
    \begin{equation}\label{f:E_l}
        \E_l (\G) = |\G|^l + O(|\G|^{\frac{2l+3}{3}}) \,.
    \end{equation}
\label{cor:E_l}
\end{corollary}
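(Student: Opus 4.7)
The plan is to apply Lemma \ref{l:improved_Konyagin_old} in two guises -- once to a single $\G$-orbit, once to the dyadic level sets of the convolution $f(x) := (\G \c \G)(x)$ -- and then to combine the two bounds through a dyadic summation. I work throughout under the assumption $|\G| \ll p^{2/3}$.

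The crucial observation is that $f$ is $\G$-invariant on $\F_p^*$: the substitution $y \mapsto ty$ with $t \in \G$ in $(\G \c \G)(x) = \sum_y \G(y) \G(y+x)$ preserves the sum. Consequently, for any fixed $x \ne 0$ the orbit $Q := \G \cdot x$ is a $\G$-invariant set of cardinality $|\G|$, and the hypotheses $|Q||Q_1||Q_2| = |\G|^3 \ll |\G|^5$ and $|Q||Q_1||Q_2||\G| = |\G|^4 \ll p^3$ of Lemma \ref{l:improved_Konyagin_old} are satisfied. Applying the lemma with $Q_1 = Q_2 = \G$,
$$|\G| \, f(x) \;=\; \sum_{y \in Q} f(y) \;\ll\; |\G|^{-1/3} (|\G|^3)^{2/3} \;=\; |\G|^{5/3},$$
so that $f(x) \ll |\G|^{2/3}$ for every $x \ne 0$.

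Next, the dyadic level sets $P_j := \{x \in \F_p^* : 2^j \le f(x) < 2^{j+1}\}$ are also $\G$-invariant, and by the previous step $P_j = \emptyset$ as soon as $2^j \gg |\G|^{2/3}$. The bounds $|P_j| \le |\G - \G| \le |\G|^2$ and $|P_j| \le p$ suffice to check the size hypotheses of the lemma, so applied to $(Q,Q_1,Q_2) = (P_j, \G, \G)$ it yields
$$2^j |P_j| \;\le\; \sum_{x \in P_j} f(x) \;\ll\; |\G|^{-1/3} (|P_j| \, |\G|^2)^{2/3} \;=\; |P_j|^{2/3} |\G|,$$
and hence the level-set bound $|P_j| \ll |\G|^3 / 2^{3j}$.

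Finally the summation. Since $f(0) = |\G|$, the diagonal contributes exactly $|\G|^l$ to $\E_l(\G) = \sum_x f(x)^l$, producing the claimed main term for $l \ge 4$. For the remaining terms,
$$\sum_{x \ne 0} f(x)^l \;\ll\; \sum_{2^j \ll |\G|^{2/3}} 2^{jl} |P_j| \;\ll\; |\G|^3 \sum_{2^j \ll |\G|^{2/3}} 2^{j(l-3)},$$
which is $\ll |\G|^{(2l+3)/3}$ when $l > 3$ (geometric sum dominated by its top term) and $\ll |\G|^3 \log |\G|$ when $l = 3$. The bound $\E(\G) \ll |\G|^{5/2}$ is Theorem \ref{t:Konyagin_energy}, and may alternatively be recovered from the same setup by splitting the dyadic sum at $2^j \sim |\G|^{1/2}$ and using the trivial bound $|P_j| \le |\G|^2 / 2^j$ (from $\sum_x f(x) = |\G|^2$) on the low side and $|P_j| \ll |\G|^3/2^{3j}$ on the high side; each piece contributes $|\G|^{5/2}$. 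The essential technical step is the pointwise bound $f(x) \ll |\G|^{2/3}$: without it the dyadic range would stretch up to $2^j \sim |\G|$, which would only reproduce the trivial $|\G|^l$ for $l \ge 4$.
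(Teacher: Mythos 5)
Your proof is correct and is essentially the intended derivation: the paper states Corollary \ref{cor:E_l} without proof, deferring to \cite{ss}, and the standard argument there is exactly your two-fold application of Lemma \ref{l:improved_Konyagin_old} — to a single $\G$-orbit to get the pointwise bound $(\G\c\G)(x)\ll|\G|^{2/3}$ for $x\neq 0$, and to the $\G$-invariant dyadic level sets to get $|P_j|\ll|\G|^3/2^{3j}$ — followed by the dyadic summation (with the diagonal term $f(0)^l=|\G|^l$ supplying the main term). Your verification of the size hypotheses of the lemma, using $|P_j|\le|\G|^2$ for the first condition and $|P_j|\le p$ together with $|\G|\ll p^{2/3}$ for the second, is exactly what makes the argument work in the full stated range.
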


Certainly, the condition $|\G| \ll p^{2/3}$ in formula (\ref{f:E_l}) can be  relaxed.

The same method gives a generalization (see \cite{K_Tula}).

\begin{theorem}
    Let $\G \subseteq \F^*_p$ be a multiplicative subgroup, $|\G| < \sqrt{p}$.
    Let also $d\ge 2$ be a positive integer.
    Then arranging $(\G *_{d-1} \G) (\xi_1) \ge (\G *_{d-1} \G) (\xi_2) \ge \dots $, where $\xi_j \neq 0$
    belong to distinct cosets,
    we have
    $$
        (\G *_{d-1} \G) (\xi_j) \ll_d |\G|^{d-2 + 3^{-1} (1+2^{2-d}) } j^{-\frac{1}{3}} \,.
    $$
    In particular
    \begin{equation}\label{}
        \T_d (\G) \ll_d |\G|^{2d-2+2^{1-d}} \,,
    \end{equation}
    further
    \begin{equation}\label{}
        \sum_z (\G\circ_{d-1} \G)^3 (z) \ll_d |\G|^{3d-4+2^{2-d}} \cdot \log |\G| \,,
    \end{equation}
    and similar
    \begin{equation}\label{f:E_3_gen_1}
        \sum_z (\G \c \G) (z) ((\G *_{d-1} \G) \c (\G *_{d-1} \G))^2 (z)
            \ll_d |\G|^{4d-2+3^{-1} (1+2^{3-2d}) } \cdot \log |\G| \,.
    \end{equation}
\label{t:3_d_moment}
\end{theorem}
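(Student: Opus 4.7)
The natural approach is induction on $d$, with the pointwise coset bound on $(\G*_{d-1}\G)(\xi_j)$ as the principal inductive statement and the three moment estimates obtained from it by dyadic summation. The base case $d=2$ is the standard Stepanov--Konyagin argument already used to derive Corollary \ref{cor:E_l}: applying Lemma \ref{l:improved_Konyagin_old} with $Q_1=Q_2=\G$ and $Q=\{\xi\neq 0:(\G\c\G)(\xi)\gs t\}$ (which is $\G$--invariant because $\G\c\G$ is constant along multiplicative cosets of $\G$) gives $tj\ll|\G|^{-1/3}(j|\G|^2)^{2/3}$, that is $(\G\c\G)(\xi_j)\ll|\G|^{2/3}j^{-1/3}$, which matches $\alpha_2 = 2/3$.

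For the inductive step one writes $\G*_{d-1}\G = (\G*_{d-2}\G)*\G$ and decomposes the left factor dyadically into level sets $P_s=\{\eta:(\G*_{d-2}\G)(\eta)\sim s\}$, again $\G$--invariant. Choosing one representative of each of the $j$ cosets on which $(\G*_{d-1}\G)$ takes value of order $t$, calling this set $\widetilde Q_t$, one obtains
$$
t\cdot j \;\le\; \sum_{\xi\in\widetilde Q_t}(\G*_{d-1}\G)(\xi)\;\ll\; \sum_s s\cdot\sum_{\xi\in\widetilde Q_t}(P_s * \G)(\xi)\,.
$$
Since $(P_s*\G)(\xi)=(P_s\c\G^{-})(-\xi)$ and the sets $-\widetilde Q_t$, $P_s$, $\G^-$ are all $\G$--invariant, Lemma \ref{l:improved_Konyagin_old} bounds each inner sum by $|\G|^{-1/3}(j|P_s||\G|)^{2/3}$. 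Feeding in the inductive control $|P_s|\ll\min(|\G|^{3\alpha_{d-1}}s^{-3},\,|\G|^{d-2}s^{-1})$ and optimising dyadically in $s$ yields, after routine bookkeeping, the exponent $\alpha_d=d-2+(1+2^{2-d})/3$; the defect $2^{2-d}$ halves at each step because the $2/3$ exponent in Stepanov's inequality contracts the exponent of the excess by a factor of $2$ at the quadratic optimum.

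The moment consequences then follow from dyadic summation. With $n(t)$ the number of $\G$--cosets where $(\G*_{d-1}\G)\sim t$, the pointwise bound gives $n(t)\ll|\G|^{3\alpha_d}t^{-3}$, while summing along cosets gives the trivial $n(t)\ll|\G|^{d-1}t^{-1}$. For $\T_d(\G)$ one has $\T_d(\G)\ll |\G|\sum_t t^2 n(t)$; the two regimes balance at $t_*=|\G|^{d-2+2^{1-d}}$ and each contributes $|\G|^{2d-2+2^{1-d}}$, proving the $\T_d$ bound. The triple and mixed moments are handled identically, except that the integrals $\sum_t t^3 n(t)$ and $\sum_t t^2 n(t)\cdot(\G\c\G)$--weights are logarithmically concentrated around $t_*$, producing the extra $\log|\G|$ factor.

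The main obstacle is orchestrating the inductive loop cleanly: the two competing estimates on $|P_s|$ must be combined so that the precise exponent $\alpha_d$ emerges, and one must verify that the induction really closes (the pointwise bound at level $d$ is fed only by moment information at level $d-1$, not at level $d$). A secondary technicality is to check at every scale that the triples $(Q,Q_1,Q_2)$ fed into Lemma \ref{l:improved_Konyagin_old} satisfy both $|Q||Q_1||Q_2|\ll|\G|^5$ and $|Q||Q_1||Q_2||\G|\ll p^3$; this is where the hypothesis $|\G|<\sqrt{p}$ is consumed, and implicitly restricts $d$ to the range where the statement is nontrivial.
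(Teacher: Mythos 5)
This theorem is not proved in the paper at all: it is quoted from Konyagin's work \cite{K_Tula} with the remark that ``the same method gives a generalization'', the method being the iterated application of the Stepanov-type Lemma \ref{l:improved_Konyagin_old}. Your proposal reconstructs exactly that strategy, and the heart of it is sound: writing $tj|\G|\ll\sum_s s\sum_{\xi\in Q}(P_s*\G)(\xi)$ with $Q$ the union of the top $j$ cosets, applying Lemma \ref{l:improved_Konyagin_old} to get $\ll |\G|\, j^{2/3}\sum_s s|P_s|^{2/3}$, and balancing the two bounds on $|P_s|$ does yield the recursion $\alpha_d=(d-1)/2+1/6+\alpha_{d-1}/2$, which one checks is solved by $\alpha_d=d-2+(1+2^{2-d})/3$; so the ``defect halves'' heuristic is correct and the induction closes.

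There are, however, concrete gaps. First, as written your sets are not admissible inputs to Lemma \ref{l:improved_Konyagin_old}: $\widetilde Q_t$ must be the union of the $j$ full cosets (size $j|\G|$), not a set of representatives, and correspondingly $|P_s|\ll\min\{|\G|^{3\alpha_{d-1}+1}s^{-3},\,|\G|^{d-1}s^{-1}\}$ (your exponents are short by one power of $|\G|$ in each term, which silently changes the balancing point). Second, the verification of the hypotheses $|Q||Q_1||Q_2|\ll|\G|^5$ and $|Q||Q_1||Q_2||\G|\ll p^3$ cannot be deferred with a wave at $|\G|<\sqrt p$: splitting an oversized set into admissible pieces \emph{loses} a factor (the bound scales like $m^{1/3}$ under an $m$-fold partition), so in the bad range one must instead show that the trivial estimates $\min\{|Q||Q_1|,|Q||Q_2|,|Q_1||Q_2|\}$ already beat the target, which is a separate case analysis and is precisely where the restriction on $j$, $s$ and $|\G|$ enters. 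Third, the mixed moment (\ref{f:E_3_gen_1}) is not ``handled identically'': bounding $(\G\c\G)(z)$ trivially by $|\G|$ and summing $((\G*_{d-1}\G)\c(\G*_{d-1}\G))^2$ over cosets overshoots the claimed exponent by $|\G|^{1/3}$, so one must run a joint dyadic decomposition in the level sets of \emph{both} correlation functions and apply Lemma \ref{l:improved_Konyagin_old} to the resulting triple sums, in the manner of the estimate (\ref{tmp:07.09.2011_1}) in the proof of Theorem \ref{t:subgroups_energy&sumset}. None of these is fatal to the approach, but each is a real piece of the argument that your sketch omits.
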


We need in a
%simple
lemma about Fourier coefficients of an arbitrary $\G$--invariant set (see e.g. \cite{ss}).

\begin{lemma}
        Let $\G \subseteq \F^*_p$ be a multiplicative subgroup,
        and $Q$ be an $\G$--invariant subset of $\F^*_p$,
        that is $Q\G=Q$.
        Then for any $\xi \neq 0$ the following holds
\begin{equation}\label{f:G-inv_bound_F}
    | \FF{Q} (\xi) | \le \min \left\{ \left(\frac{|Q|p}{|\G|}\right)^{1/2} \,, \frac{|Q|^{3/4} p^{1/4} \E^{1/4} (\G)}{|\G|} \,,
                            p^{1/8} \E^{1/8} (\G) \E^{1/8} (Q) \left(\frac{|Q|}{|\G|}\right)^{1/2} \right\} \,.
\end{equation}
\label{l:G-inv_bound_F}
\end{lemma}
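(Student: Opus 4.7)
The plan is to exploit the $\G$-invariance of $Q$ in three complementary ways. The starting observation, used throughout, is that $gQ=Q$ for every $g\in\G$ implies $\FF{Q}(g\xi)=\FF{Q}(\xi)$ for any $g\in\G$ and $\xi\in\F_p$; equivalently, $|\FF{Q}(\cdot)|^2$ is constant on every $\G$-coset. The same reasoning shows that $Q\c Q$ is $\G$-invariant on $\F_p$ and that $|\FF{\G}(\cdot)|^4$ is constant on $\G$-cosets. Each of the three bounds then follows by combining this coset symmetry with Parseval's identity and progressively finer Cauchy--Schwarz or H\"older manipulations.

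The first bound is immediate from Parseval, $\sum_{\xi}|\FF{Q}(\xi)|^2=p|Q|$: retaining only the single $\G$-coset through $\xi$ on the left and using that $|\FF{Q}|^2$ takes one value there, repeated $|\G|$ times, gives $|\G||\FF{Q}(\xi)|^2\le p|Q|$. For the second bound I would decompose $Q$ as a disjoint union of $k=|Q|/|\G|$ multiplicative cosets $g_i\G$, so that $\FF{Q}(\xi)=\sum_i \FF{\G}(g_i\xi)$. H\"older's inequality with exponents $4/3$ and $4$ yields $|\FF{Q}(\xi)|^4\le k^3\sum_i|\FF{\G}(g_i\xi)|^4$. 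Since the $g_i\xi$ lie in distinct $\G$-cosets and $|\FF{\G}|^4$ is $\G$-invariant, one has $\sum_i|\FF{\G}(g_i\xi)|^4\le|\G|^{-1}\sum_{\eta\ne 0}|\FF{\G}(\eta)|^4\le p\E(\G)/|\G|$ by Parseval applied to $\G\c\G$. Substituting $k=|Q|/|\G|$ gives the second bound.

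The third bound uses the coset-averaging identity
$$
|\G|\cdot|\FF{Q}(\xi)|^2 \;=\; \sum_{g\in\G}|\FF{Q}(g\xi)|^2 \;=\; \sum_y (Q\c Q)(y)\,\FF{\G}(\xi y),
$$
where the second equality is the Fourier expansion of $|\FF{Q}|^2=\FF{Q\c Q}$ averaged over the $\G$-orbit of $\xi$. Cauchy--Schwarz against the weight $(Q\c Q)(y)$, together with $\sum_y(Q\c Q)(y)=|Q|^2$, gives
$$
|\G|^2|\FF{Q}(\xi)|^4 \;\le\; |Q|^2\sum_y(Q\c Q)(y)\,|\FF{\G}(\xi y)|^2.
$$
Substituting $|\FF{\G}(\xi y)|^2=\FF{\G\c\G}(\xi y)$ and swapping the order of summation rewrites the inner sum as $\sum_z(\G\c\G)(z)|\FF{Q}(\xi z)|^2$. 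A final Cauchy--Schwarz bounds this by $\E(\G)^{1/2}\bigl(\sum_\eta|\FF{Q}(\eta)|^4\bigr)^{1/2}=\E(\G)^{1/2}(p\E(Q))^{1/2}$, the last equality using that $z\mapsto \xi z$ is a bijection of $\F_p$ for $\xi\ne 0$. Taking an eighth root yields the third bound.

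The main obstacle is purely bookkeeping: picking the right H\"older exponents and the correct order of Cauchy--Schwarz so that the factors $|Q|$, $p$, $\E(\G)$, $\E(Q)$ emerge with exactly the stated exponents, and invoking the $\G$-invariance at the right moment so that the bijection $z\mapsto\xi z$ turns a partial sum into the full Parseval sum over $\F_p$.
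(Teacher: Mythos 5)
Your proof is correct: all three bounds follow exactly as you compute them, and I verified the exponents in each case (in particular $|\G|^2|\FF{Q}(\xi)|^4\le |Q|^2\,\E(\G)^{1/2}(p\,\E(Q))^{1/2}$ does give the third bound upon taking fourth roots). The paper itself does not prove this lemma but cites it from \cite{ss}, and your argument --- constancy of $\FF{Q}$ on multiplicative cosets combined with Parseval, H\"older over the coset decomposition of $Q$, and the identity $|\G|\,|\FF{Q}(\xi)|^2=\sum_y(Q\c Q)(y)\FF{\G}(\xi y)$ followed by two applications of Cauchy--Schwarz --- is precisely the standard route to these estimates.
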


\bigskip

Recall that a set $A=\{a_1,\dots, a_n\} \subseteq \R$ is called {\it convex}
if $a_i-a_{i-1}<a_{i+1}-a_i$ for every $2\le i\le n-1.$
Convex sets have statistics similar to multiplicative subgroups, in some sense.
We need in a lemma, see e.g. \cite{ss2} or \cite{Li}.

\begin{lemma}
    Let $A$ be a convex set, and $B$ be an arbitrary set.
    Then
    $$
        \E_3 (A) \ll |A|^3 \log |A| \,,
    $$
    and
    $$
        \E(A,B) \ll |A| |B|^{\frac{3}{2}} \,.
    $$
\label{l:E_3_convex}
\end{lemma}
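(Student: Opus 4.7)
The plan is to derive both bounds from the Szemer\'edi--Trotter incidence theorem applied to the convex curve that $A$ sits on; full details are worked out in \cite{ss2,Li}. Writing $A=\{a_1<\ldots<a_n\}$, strict convexity forces the map $(i,j)\mapsto(i-j,\,a_i-a_j)$ to be injective on $\{(i,j):i>j\}$, so that for each fixed step $s\ge 1$ the sequence $T_s:=\{a_{i+s}-a_i\}_i$ is strictly monotone in $i$. This is the single structural consequence of convexity that I would use throughout.

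To prove $\E(A,B)\ll|A||B|^{3/2}$, I would rewrite
$$\E(A,B)=|\{(a,a',b,b')\in A^2\times B^2:a-a'=b-b'\}|=\sum_{s\ge 1}|\{(b,b',i):a_{i+s}-a_i=b-b'\}|$$
using the injectivity above, and view the right-hand side as counting incidences between the $|B|^2$ points of $B\times B$ and the $n$ monotone level sequences $T_s$. Applying Szemer\'edi--Trotter yields an estimate of order $|A|^{4/3}|B|^{4/3}$ plus lower-order terms, and balancing with the trivial $\E(A,B)\le|A|^2|B|$ gives the claimed $|A||B|^{3/2}$. For $\E_3(A)\ll|A|^3\log|A|$, I would dyadically decompose
$$\E_3(A)\asymp\sum_\tau\tau^3|D_\tau|,\qquad D_\tau:=\{x:(A\c A)(x)\asymp\tau\},$$
and apply two complementary level-set bounds: the trivial $|D_\tau|\le|A|^2/\tau$ for $\tau\le|A|^{1/2}$ contributes $\ll|A|^3$ in total, while for $\tau>|A|^{1/2}$ a Szemer\'edi--Trotter bound of the form $|D_\tau|\ll|A|^3/\tau^3$, coming from the same incidence configuration as above with $B=A$ and isolating only those $x$ with $\ge\tau$ representations, gives a contribution of $|A|^3$ per dyadic level and hence $\ll|A|^3\log|A|$ after summing over the $O(\log|A|)$ levels.

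The hard part is the incidence-geometry step: identifying the correct family of points and curves whose incidence count equals the additive quantity at hand, and verifying the requisite non-degeneracy of the configuration from the strict convexity of the sequence $(a_i)$. This is the technical core of the arguments in \cite{ss2,Li}, which can be invoked here essentially as a black box.
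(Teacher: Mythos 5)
The paper offers no proof of this lemma at all --- it simply cites \cite{ss2} and \cite{Li} --- so deferring the incidence-geometric core to those references is consistent with what the author does. Your treatment of the first bound is structurally correct and matches the cited proofs: the level-set estimate $|\{x:(A\c A)(x)\ge\tau\}|\ll|A|^3\tau^{-3}$ (the $B=A$ case of $|\{x:(A\c B)(x)\ge\tau\}|\ll|A||B|^2\tau^{-3}+|B|\tau^{-1}$, the convex analogue of Lemma \ref{l:arranging_gen}, and the $k=2$ case of Theorem \ref{t:convex_ikrt}) combined with the trivial $|D_\tau|\le|A|^2/\tau$ over dyadic levels indeed yields $\E_3(A)\ll|A|^3\log|A|$.

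Your derivation of $\E(A,B)\ll|A||B|^{3/2}$, however, has a genuine quantitative gap. A single global Szemer\'edi--Trotter count of order $|A|^{4/3}|B|^{4/3}$, taken together with the trivial bound $|A|^2|B|$, does not give $|A||B|^{3/2}$: at $|B|=|A|$ the minimum of the two is $|A|^{8/3}$, strictly weaker than the required $|A|^{5/2}$, and the two expressions only meet the target at the single point $|B|=|A|^2$. The correct route is the same level-set argument you already use for $\E_3$: one needs the asymmetric bound $|\{x:(A\c B)(x)\ge\tau\}|\ll|A||B|^2\tau^{-3}+|B|\tau^{-1}$ for every $\tau$, whence
$\E(A,B)=\sum_x(A\c B)(x)^2\ll\sum_{\tau}\min\left(\tau|A||B|,\,|A||B|^2\tau^{-1}\right)\ll|A||B|^{3/2}$,
the two terms balancing at $\tau\sim|B|^{1/2}$ and the dyadic sum being geometric on either side of the balance point, so no logarithm appears. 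A secondary issue: ``incidences between the points of $B\times B$ and the monotone sequences $T_s$'' is not literally a point--curve configuration to which Szemer\'edi--Trotter applies (the level sequences are not curves, and the lines $y=x+t$ they would naturally parametrize are all parallel, so each point meets at most one of them). In \cite{ss2,Li} the curves are the translates $y=f(x-s)+t$ of the convex graph $y=f(x)$ parametrizing $A$, any two of which cross at most once, and the asymmetry $|A||B|^2\tau^{-3}$ --- one power of $|A|$ against two of $|B|$ --- comes from counting $\tau$-rich translates rather than from a single global incidence count.
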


Now consider quantities $(A*_{k-1} A) (x)$.
By a classical result of Andrews \cite{Andrews}, we have for any $x$ that
$$
    (A*_{k-1} A) (x) \ll_k |A|^{\frac{k (k-1)}{k+1}} \,.
$$
The following result was proved in \cite{ikrt}.

\begin{theorem}
    Let $A$ be a convex set, and $k\ge 2$ be an integer.
    Then arranging $(A*_{k-1} A) (x_1) \ge (A*_{k-1} A) (x_2) \ge \dots $, we have
    \begin{equation}\label{f:E_3_gen_2-}
        (A*_{k-1} A) (x_j) \ll_k |A|^{k-\frac{4}{3} (1-2^{-k})} j^{-\frac{1}{3}} \,.
    \end{equation}
    In particular
    \begin{equation}\label{f:E_3_gen_2}
        \sum_x (A \c A) (z) ((A *_{k-1} A) \c (A *_{k-1} A))^2 (x)
            \ll_k |A|^{4k-2+3^{-1} (1+2^{3-2k}) } \cdot \log |A| \,.
    \end{equation}
\label{t:convex_ikrt}
\end{theorem}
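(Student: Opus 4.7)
The result is the convex-set analogue of Theorem \ref{t:3_d_moment}. My plan is to mirror that proof, replacing the Stepanov-based Lemma \ref{l:improved_Konyagin_old} with the convex-set energy estimates of Lemma \ref{l:E_3_convex} (namely $\E_3(A)\ll |A|^3\log|A|$ and $\E(A,B)\ll |A||B|^{3/2}$). I proceed by induction on $k$, setting $d_k := k - \tfrac{4}{3}(1-2^{-k})$. For the base case $k=2$, sorting $(A*A)(x_1)\ge(A*A)(x_2)\ge\dots$ and using $j\cdot (A*A)(x_j)^3 \le \E_3(A) \ll |A|^3\log|A|$ gives $(A*A)(x_j)\ll |A|\log^{1/3}|A|\cdot j^{-1/3}$, agreeing with (\ref{f:E_3_gen_2-}) for $d_2=1$ (the logarithm is absorbed into $\ll_k$).

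For the inductive step, put $H := A*_{k-1}A = A*H'$ with $H' := A*_{k-2}A$, and let $L_t := \{x:H(x)\ge t\}$, $M_s := \{y:H'(y)\ge s\}$. The inductive hypothesis gives $|M_s|\ll |A|^{3d_{k-1}}/s^3$. Starting from
\[
 t|L_t| \le \sum_{x\in L_t}H(x) = \sum_b H'(b)\,(A\circ L_t)(b),
\]
I decompose $H'$ dyadically over $M_{2s}\setminus M_s$, bound each dyadic piece by Cauchy--Schwarz together with $\E(A,L_t)\ll |A||L_t|^{3/2}$, and sum. With the right balancing over the $O(\log|A|)$ dyadic scales (terminating at $s_{\max}\ll |A|^{d_{k-1}}$, in line with Andrews' estimate), one obtains an inequality of the form $t|L_t|\ll |A|^{\alpha_k}|L_t|^{2/3}$ which, once the identity $d_k - d_{k-1} = 1 - \tfrac{2}{3}\cdot 2^{-k}$ is checked by direct computation, rearranges to $|L_t|\ll |A|^{3d_k}/t^3$, equivalent to (\ref{f:E_3_gen_2-}).

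Inequality (\ref{f:E_3_gen_2}) then follows from (\ref{f:E_3_gen_2-}) by a further dyadic decomposition. Writing $H = \sum_t H_t$ with $H_t \asymp t\cdot \mathbf{1}_{L_t\setminus L_{2t}}$, the sum $\sum_x(A\circ A)(x)(H\circ H)^2(x)$ expands into a sum over a quadruple of dyadic scales associated to the four copies of $H$; each piece is estimated using $|L_t|\ll |A|^{3d_k}/t^3$ together with $\E(A,L_t)\ll |A||L_t|^{3/2}$, and the resulting geometric series in the scale parameters sum to the claimed exponent $4k-2+\tfrac{1}{3}(1+2^{3-2k})$, with a factor $\log|A|$ absorbing the number of dyadic scales.

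The main obstacle is the precise dyadic bookkeeping in the inductive step: one must track which scale $s$ is dominant and verify that the loss in the $s$-sum is only logarithmic, so that the recursion for $d_k$ comes out tight. This is exactly the delicate balancing that appears in the multiplicative-subgroup argument of Theorem \ref{t:3_d_moment}. Once the inductive bound is established the deduction of (\ref{f:E_3_gen_2}) is then a routine dyadic computation.
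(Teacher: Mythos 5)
First, a point of reference: the paper does not prove Theorem \ref{t:convex_ikrt} at all --- it is quoted verbatim from \cite{ikrt} (``The following result was proved in \cite{ikrt}''), just as Theorem \ref{t:3_d_moment} is quoted from \cite{K_Tula}. So there is no in-paper proof to mirror, and your proposal has to stand on its own. As it stands it does not: the central inductive step is exactly the part you defer (``the main obstacle is the precise dyadic bookkeeping\dots the delicate balancing''), and the pieces you do commit to do not assemble into the stated exponent. Concretely: (i) the recursion you say ``is checked by direct computation'' is wrong --- from $d_k=k-\tfrac43(1-2^{-k})$ one gets $d_k-d_{k-1}=1-\tfrac43\cdot 2^{-k}$, not $1-\tfrac23\cdot 2^{-k}$, which is strong evidence the induction was never actually closed; (ii) the chain $t|L_t|\le\sum_b H'(b)(A\circ L_t)(b)$ followed by Cauchy--Schwarz and $\E(A,L_t)\ll|A||L_t|^{3/2}$ produces $|L_t|$ to the power $3/4$ on the right, hence a bound of the shape $|L_t|\ll C(|A|)\,t^{-4}$; a rearrangement/H\"older variant using the full strength of the inductive hypothesis on $H'$ gives $|L_t|\ll C(|A|)\,t^{-6}$. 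Neither is the required $|L_t|\ll|A|^{3d_k}t^{-3}$, and one can check (e.g.\ at $k=3$, $t=|A|^{5/4}$) that these are genuinely weaker, not just differently normalized. The energy inequality $\E(A,B)\ll|A||B|^{3/2}$ is a corollary of Szemer\'edi--Trotter and is strictly weaker than the incidence theorem itself; the actual argument of \cite{ikrt} applies the incidence bound directly to translates of the convex curve with weights coming from the level sets of $A*_{k-2}A$, and it is that step which produces the characteristic halving recursion $3d_k=\tfrac12(3d_{k-1}+3k-1)$ hidden in the exponent $k-\tfrac43(1-2^{-k})$. Your toolkit, as listed, does not contain the ingredient that generates this recursion.

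Two further problems. Your base case $k=2$ deduces $(A*A)(x_j)\ll|A|\log^{1/3}|A|\cdot j^{-1/3}$ from the third-moment bound and claims the logarithm ``is absorbed into $\ll_k$''; it cannot be, since $\ll_k$ allows dependence on $k$ only, and the extra $\log$ would in any case propagate through every stage of the induction, spoiling the single $\log|A|$ in (\ref{f:E_3_gen_2}). (The log-free statement $|\{x:(A*A)(x)\ge t\}|\ll|A|^3/t^3$ requires the incidence argument, not the moment bound; note also that $\E_3(A)$ as defined in the paper controls $\sum_x(A\circ A)(x)^3$, not $\sum_x(A*A)(x)^3$, so even the base case needs a word about passing between $*$ and $\circ$ via the convexity of $-A$.) Finally, the deduction of (\ref{f:E_3_gen_2}) from (\ref{f:E_3_gen_2-}) is plausible in outline but is itself a four-fold dyadic computation whose exponent arithmetic you have not exhibited. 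In short, the proposal is a reasonable description of the shape of the argument in \cite{ikrt}, but every quantitative step that makes the theorem true is either missing or miscalculated.
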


As was realized by Li \cite{Li} (see also \cite{ss_E_k}) that subsets $A$ of real numbers
with small multiplicative doubling looks like convex sets.
More precisely, the following lemma from \cite{ss_E_k} holds.

\begin{lemma}
    Let $A,B \subseteq \R$ be finite sets and let $|AA| = M |A|$.
    Then arranging $(A\c B) (x_1) \ge (A\c B) (x_2) \ge \dots $, we have
$$
    (A\c B) (x_j) \ll (M \log M)^{2/3} |A|^{1/3} |B|^{2/3} j^{-1/3} \,.
$$
In particular
$$
    \E (A,B) \ll M \log M |A| |B|^{3/2} \,.
$$
\label{l:arranging_gen}
\end{lemma}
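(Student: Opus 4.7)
My approach is to prove the pointwise estimate via the Szemer\'edi--Trotter incidence theorem applied to an \emph{asymmetric} point-line configuration that uses multiplication by $A$ as a symmetry; the energy estimate then follows by dyadic integration of the level-set bounds.

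First, fix $\tau>0$ and let $X_\tau=\{x:(A\c B)(x)\ge\tau\}$, $J:=|X_\tau|$. For each $x_j\in X_\tau$ fix a family $R_{x_j}\subseteq A\times B$ of $\tau$ representations of $x_j$ as $b-a$. It suffices to establish $J\tau^3\ll M^2(\log M)^2\,|A|\,|B|^2$, which is just a rearrangement of the pointwise bound. The key incidence setup is the following. Take as point set $P=AA\times B\subseteq\R^2$, so $|P|\le M|A||B|$, and as line family
$$
\mathcal{L}=\{\ell_{j,\alpha}:\,v=u/\alpha+x_j\}_{x_j\in X_\tau,\,\alpha\in A},
$$
giving $|\mathcal{L}|=J|A|$ pairwise distinct lines (slope $1/\alpha$ and intercept $x_j$ recover $(\alpha,x_j)$). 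For every $(a,b)\in R_{x_j}$ and every $\alpha\in A$, the point $(\alpha a,b)\in P$ satisfies $b=a+x_j=(\alpha a)/\alpha+x_j$ and so lies on $\ell_{j,\alpha}$; hence each line carries at least $\tau$ points of $P$, giving $I(P,\mathcal{L})\ge J|A|\tau$.

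Szemer\'edi--Trotter now yields
$$
I(P,\mathcal{L})\ll(|P|\cdot|\mathcal{L}|)^{2/3}+|P|+|\mathcal{L}|\ll M^{2/3}|A|^{4/3}|B|^{2/3}J^{2/3}+M|A||B|+J|A|.
$$
Balancing the lower bound against the first summand gives the main estimate $(A\c B)(x_j)\ll M^{2/3}|A|^{1/3}|B|^{2/3}j^{-1/3}$, while the second summand produces the complementary bound $N(\tau):=|X_\tau|\ll M|B|/\tau$ valid for small $\tau$. For the energy inequality I use the level-set identity $\E(A,B)=2\int_0^\infty \tau N(\tau)\,d\tau$ together with $N(\tau)\ll M^2|A||B|^2/\tau^3$ in the large-$\tau$ regime and $N(\tau)\ll M|B|/\tau$ in the small-$\tau$ regime; these two bounds balance at $\tau_\ast\sim(M|A||B|)^{1/2}$, and summing over the $O(\log M)$ dyadic scales straddling $\tau_\ast$ produces $\E(A,B)\ll M\log M\cdot|A||B|^{3/2}$, with the $\log M$ absorbing the dyadic pigeonholing between the two regimes.

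The main obstacle is the correct choice of the incidence configuration. The naive choice $P=AA\times AB$ with lines $v-u=\alpha x_j$ forces one to invoke the trivial $|AB|\le|A||B|$ (there is no hypothesis on $|AB|$), and this loses a factor $|A|^{2/3}$, leaving only the bound $\tau\ll M^{2/3}|A||B|^{2/3}J^{-1/3}$. The crucial observation is that applying the multiplicative rescaling \emph{only to the $A$-coordinate}, and absorbing $\alpha$ into the slope of the line, keeps $|P|$ as small as $M|A||B|$ while still producing $J|A|$ distinct rich lines. Everything else in the plan is routine: verifying line distinctness, checking that the $\tau$ constructed points on each line are pairwise distinct (immediate since $\alpha\ne0$), and the dyadic summation for the energy.
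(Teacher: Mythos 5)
Your incidence argument for the pointwise bound is correct, and it is worth noting that the paper itself gives no proof of this lemma (it is quoted from \cite{ss_E_k}), so your derivation is a genuinely self\--contained one. The configuration $P=AA\times B$, $\ell_{j,\alpha}:v=u/\alpha+x_j$ is legitimate: the lines are pairwise distinct (modulo discarding $0$ from $A$, which costs only a constant), each carries $\tau$ distinct points, and Szemer\'edi--Trotter then gives $J\tau^{3}\ll M^{2}|A||B|^{2}$ after absorbing the $|\mathcal L|$ term (trivial unless $\tau\gg1$) and the $|P|$ term (which is dominated because $\tau^{2}\le|A||B|$). In fact you obtain the pointwise estimate \emph{without} the $(\log M)^{2/3}$ factor, which is stronger than the stated lemma.

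The passage to the energy bound, however, contains a genuine error. You assert that the summand $|P|=M|A||B|$ "produces the complementary bound $N(\tau)\ll M|B|/\tau$ valid for small $\tau$". Szemer\'edi--Trotter only tells you that $N(\tau)\,\tau$ is bounded by the \emph{sum} of the three terms, i.e.\ for each $\tau$ you learn $N(\tau)\ll M^{2}|A||B|^{2}\tau^{-3}+M|B|\tau^{-1}$; you may not select the smaller term. Since $\tau\le\min(|A|,|B|)$ forces $\tau^{2}\le M|A||B|$, the cubic term dominates throughout the entire range, so the only level\--set bound you actually have is $N(\tau)\ll M^{2}|A||B|^{2}\tau^{-3}$. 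The bound $N(\tau)\ll M|B|/\tau$ is moreover false in general for small $\tau$: at $\tau=1$ it would assert $|A-B|\ll M|B|$, which fails e.g.\ for a geometric progression $A$ and a singleton $B$. With only the cubic bound, your integral $\int\tau N(\tau)\,d\tau$ produces $M^{2}|A||B|^{2}$ at the lower endpoint, which is far weaker than the target, so the "balancing at $\tau_{*}\sim(M|A||B|)^{1/2}$" never takes place inside the admissible range.

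The repair is the standard one and uses the trivial $L^{1}$ identity $\sum_{x}(A\c B)(x)=|A||B|$ rather than a second level\--set bound: split $\E(A,B)=\sum_{j}(A\c B)(x_j)^{2}$ at $j_{0}=|A||B|^{1/2}/M$, estimate the head by $\sum_{j\le j_{0}}M^{4/3}|A|^{2/3}|B|^{4/3}j^{-2/3}\ll M^{4/3}|A|^{2/3}|B|^{4/3}j_{0}^{1/3}$ using your pointwise bound, and the tail by $(A\c B)(x_{j_{0}})\sum_{x}(A\c B)(x)\ll M^{2/3}|A|^{1/3}|B|^{2/3}j_{0}^{-1/3}\cdot|A||B|$. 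Both pieces equal $M|A||B|^{3/2}$ at this choice of $j_{0}$, which gives the energy estimate (again without the $\log M$).
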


\section{Weighted Katz--Koester transform}
\label{sec:weighted_KK}

In the section we have deal with so--called Katz--Koester trick   \cite{kk} based on inequality (\ref{f:kk_introduction}),
which has recently found many applications, see \cite{A(A+1),Li,Li2,Rudnev_C,Schoen_Freiman,ss,ss2,ss_E_k,sv}.
We collect all required tools in the section.

First of all let us recall Lemma 2.4 and Corollary 2.5 from \cite{sv}.
We gather the results in the following proposition.

\begin{proposition}
    Let $k \ge 2$, $m \in [k]$ be positive integers, and
    let $A_1,\dots,A_k,B$ be finite subsets of an abelian group.
    Then
    \begin{equation}\label{f:characteristic1}
        A_1 \m \dots \m A_k - \Delta_k (B) = \{ (x_1,\dots,x_k) ~:~ B \cap (A_1-x_1) \cap \dots \cap(A_k - x_k) \neq \emptyset \}
    \end{equation}
    and
    \begin{equation}\label{f:characteristic2}
        A_1 \m \dots \m A_k - \Delta_k (B)
            =
    \end{equation}
    $$
                \bigcup_{(x_1,\dots,x_m) \in A_1 \m \dots \m A_m - \Delta(B)}
                    \{ (x_1,\dots,x_m) \} \m (A_{m+1} \m \dots \m A_k - \Delta_{k-m} (B \cap (A_1-x_1) \cap \dots \cap (A_m-x_m)) \,.
    $$
\label{p:characteristic}
\end{proposition}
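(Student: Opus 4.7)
\medskip

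My plan is to prove the two identities by directly unpacking the definition of Minkowski difference and then bootstrapping the first identity to get the second.

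For (\ref{f:characteristic1}), I would simply write out what $(x_1,\dots,x_k) \in A_1 \times \dots \times A_k - \Delta_k(B)$ means: there exist $a_i \in A_i$ and a common $b \in B$ with $x_i = a_i - b$ for every $i \in [k]$. Rewriting $a_i = b + x_i$, the existence of such $a_i$ with $a_i \in A_i$ is equivalent to $b \in A_i - x_i$. Asking this for all $i$ simultaneously (with the same $b$) is the same as asking $B \cap (A_1 - x_1) \cap \dots \cap (A_k - x_k) \neq \emptyset$. This is just a quantifier swap and occupies a couple of lines.

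For (\ref{f:characteristic2}), the idea is to apply (\ref{f:characteristic1}) twice. First, restricting to the first $m$ coordinates, (\ref{f:characteristic1}) tells us that the $(x_1,\dots,x_m)$ appearing in the outer index set $A_1 \times \dots \times A_m - \Delta_m(B)$ are exactly those for which $B' := B \cap (A_1-x_1) \cap \dots \cap (A_m-x_m)$ is nonempty. Second, applied to the last $k-m$ coordinates with $B'$ in place of $B$, the identity gives
$$(x_{m+1},\dots,x_k) \in A_{m+1} \times \dots \times A_k - \Delta_{k-m}(B') \iff B' \cap (A_{m+1}-x_{m+1}) \cap \dots \cap (A_k - x_k) \neq \emptyset.$$
Since $B' \cap (A_{m+1}-x_{m+1}) \cap \dots \cap (A_k - x_k) = B \cap (A_1-x_1) \cap \dots \cap (A_k-x_k)$, the right hand side is precisely the nonemptiness condition characterizing the full set $A_1 \times \dots \times A_k - \Delta_k(B)$ via (\ref{f:characteristic1}). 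Both inclusions of (\ref{f:characteristic2}) now follow: a tuple on the left is put into the summand indexed by its first $m$ coordinates; a tuple on the right automatically satisfies the joint nonemptiness condition because $B'$ is nonempty and the remaining intersections are witnessed.

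I do not anticipate a serious obstacle here: the whole proposition is essentially a bookkeeping lemma on intersections of shifts, and once (\ref{f:characteristic1}) is established the fibered decomposition (\ref{f:characteristic2}) is immediate. The only mild subtlety is to notice that (\ref{f:characteristic2}) is merely claimed as a set-theoretic union (not a disjoint one), so I do not have to argue that different outer indices $(x_1,\dots,x_m)$ produce disjoint fibers; this avoids any need for bookkeeping to avoid double-counting. In writing it up I would simply verify the two inclusions $\subseteq$ and $\supseteq$ in (\ref{f:characteristic2}) using (\ref{f:characteristic1}) as the one nontrivial input.
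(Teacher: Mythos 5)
Your proof is correct. The paper itself does not reprove Proposition~\ref{p:characteristic}; it simply restates Lemma~2.4 and Corollary~2.5 of \cite{sv}, so there is no in-text argument to compare against. Your treatment is the natural one: (\ref{f:characteristic1}) is a two-line quantifier swap on the definition of the Minkowski difference, and (\ref{f:characteristic2}) then follows by applying (\ref{f:characteristic1}) to the first $m$ coordinates (identifying when $B' := B \cap (A_1-x_1)\cap\dots\cap(A_m-x_m)$ is nonempty), to the last $k-m$ coordinates with $B'$ in place of $B$, and to the full $k$-tuple, using $B' \cap (A_{m+1}-x_{m+1})\cap\dots\cap(A_k-x_k) = B\cap(A_1-x_1)\cap\dots\cap(A_k-x_k)$. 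One tiny point worth being explicit about in a write-up: for the inclusion $\subseteq$ you need that membership of $(x_1,\dots,x_k)$ on the left forces $(x_1,\dots,x_m)$ to lie in the outer index set, which is immediate since $B'\supseteq B\cap(A_1-x_1)\cap\dots\cap(A_k-x_k)\neq\emptyset$; you gesture at this with the word ``automatically,'' and it is fine. (Also note the statement as printed writes $\Delta(B)$ for what should be $\Delta_m(B)$ in the outer index set; you have correctly read it that way.)
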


Let $A,B\subseteq \Gr$ be sets, $x\in \Gr^k$, $s\in \Gr^l$.
By the proposition, we have $x \in A^k - \Delta_k (A^B_s)$ iff $s \in A^l - \Delta_l (A^B_x)$
because of $x \in A^k - \Delta_k (A^B_s )$ iff $A^B_x \bigcap A^B_s \neq \emptyset$.
Hence, we obtain the following formula
\begin{equation}\label{f:x_in_A-A_s}
    \sum_{s\in A^l - \Delta_l (B)} (A^k-\Delta_k (A^B_s)) (x) = |A^l- \Delta_l (A^B_x)| \,.
\end{equation}
In particular
$$
    (A-A_s) (x) = (A-A_x) (s) \quad \mbox{ and } \quad \sum_s (A-A_s) (x) = |A-A_x| \,.
$$

The next lemma is a very special case of Lemma 2.8 from \cite{sv}.

\begin{lemma}
    Let $A,B\subseteq \Gr$ be sets, and $k$, $l$ be positive integers.
    Then
    $$
        \sum_{s\in \Gr^l} \E (A^k, \Delta(A^B_s)) = \E_{k+l+1} (B,A) \,.
    $$
\label{l:E(A,A_s)_special}
\end{lemma}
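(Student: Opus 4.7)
The plan is to expand both sides directly and show that they reduce to the same sum over $B \times B$ weighted by powers of the autocorrelation $A \c A$.

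First, using the convolution form (\ref{f:energy_convolution}) of the additive energy in the ambient group $\Gr^k$, one has for any subset $C \subseteq \Gr$
$$
    \E(A^k, \Delta_k(C)) = \sum_{x \in \Gr^k} \bigl(A^k * \Delta_k(C)\bigr)(x)^2 = \sum_{x \in \Gr^k} \Bigl| C \cap \bigcap_{i=1}^{k} (x_i - A) \Bigr|^2,
$$
since a representation $x = a + \Delta_k(c)$ with $a \in A^k$ and $c \in C$ amounts to the single choice of $c \in C$ satisfying $x_i - c \in A$ for every $i \in [k]$.

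Next I set $C = A^B_s$ and sum over $s \in \Gr^l$. Expanding the square, the left-hand side becomes the number of tuples $(s, x, y_1, y_2) \in \Gr^l \times \Gr^k \times B \times B$ for which $y_\alpha + s_j \in A$ and $x_i - y_\alpha \in A$ hold for every $\alpha \in \{1,2\}$, $j \in [l]$ and $i \in [k]$. For fixed $y_1, y_2 \in B$ these conditions decouple completely across the $l+k$ coordinates: each $s_j$ ranges independently over $(A - y_1) \cap (A - y_2)$ and each $x_i$ ranges independently over $(A + y_1) \cap (A + y_2)$. Both of these intersections have cardinality $(A \c A)(y_2 - y_1)$, using $|A \cap (A - t)| = (A \c A)(t)$ together with the fact that the autocorrelation $A \c A$ is an even function on any abelian group.

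Consequently the contribution of a fixed pair $(y_1, y_2) \in B \times B$ is $(A \c A)(y_2 - y_1)^{l+k}$, and collecting by the difference $t = y_2 - y_1$ gives
$$
    \sum_{s \in \Gr^l} \E(A^k, \Delta_k(A^B_s)) = \sum_{y_1, y_2 \in B} (A \c A)(y_2 - y_1)^{k+l} = \sum_{t \in \Gr} (B \c B)(t) \, (A \c A)(t)^{k+l},
$$
which by the definition (\ref{f:E_k_preliminalies_B}) of $\E_{k+l+1}(B, A)$ is exactly the right-hand side. The only step requiring care is the decoupling across the $l+k$ shift variables for fixed $y_1, y_2$, but it follows immediately from the product structure of the constraints once the role of each coordinate is separated; everything else is mechanical bookkeeping.
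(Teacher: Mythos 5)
Your proof is correct, and it is essentially the same direct expansion the paper relies on: the lemma is quoted from Lemma 2.8 of \cite{sv}, and the identical computation (in the equivalent form $\sum_s \E(A^k,\Delta(A^*_s))=\sum_z (A\circ A)^k(z)(B\circ B)(z)(A\circ A)^l(-z)$) appears inline in the proof of Proposition \ref{p:weight} for the ``plus'' variant. Your decoupling of the $k+l$ coordinates for fixed $y_1,y_2\in B$ is exactly the factorization that yields $\sum_t (B\circ B)(t)(A\circ A)(t)^{k+l}=\E_{k+l+1}(B,A)$.
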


Now we obtain the main proposition of the section.

\begin{proposition}
    Let
    $A,B\subseteq \Gr$ be two sets,
    %$A$ be a set,
    $k$, $l$ be positive integers,
    and $q : \Gr^k \to \mathbb {C}$ be an arbitrary function.
    Then
\begin{equation}\label{f:wight_1}
    |A|^{2l} \left| \sum_{x \in \Gr^k} q(x) (A^k \circ \Delta_k (B)) (x) \right|^2
        \le
            \E_{k+l+1} (B,A) \cdot \sum_{x \in \Gr^k} |A^l \pm \Delta_l (A^B_x)| |q(x)|^2 \,.
\end{equation}
\label{p:weight}
\end{proposition}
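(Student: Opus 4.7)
The plan is to apply Cauchy--Schwarz with a carefully designed auxiliary summation that manufactures both the prefactor $|A|^{2l}$ and the sumset size $|A^l\pm\Delta_l(A^B_x)|$. First I would unfold the convolution to see that $(A^k\circ\Delta_k(B))(x)=|A^B_x|$ (up to a harmless reflection of coordinates), so the sum on the left--hand side is just $\sum_x q(x)\,|A^B_x|$. The core ingredient is the identity
\begin{equation*}
|A|^l\cdot|A^B_x|=\sum_{s\in\Gr^l} f^{\pm}(x,s),
\end{equation*}
obtained by writing $|A|^l=\sum_{a\in A^l}1$ and changing variables via $s_j=a_j\mp b$ with $b$ ranging over $A^B_x$. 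For the ``$-$'' sign this yields $f^{-}(x,s)=|A^B_{(x_1,\dots,x_k,s_1,\dots,s_l)}|$; for the ``$+$'' sign $f^{+}(x,s)=|A^B_x\cap\bigcap_{j=1}^l(s_j-A)|$. Crucially, $f^{\pm}(x,\cdot)$ is supported precisely on $A^l\pm\Delta_l(A^B_x)$ --- the multi--coordinate form of the reciprocity (\ref{f:x_in_A-A_s}).

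With this identity in hand I would multiply $\sum_x q(x)\,|A^B_x|$ by $|A|^l$, substitute, and apply Cauchy--Schwarz to the factorization $q(x)\cdot\mathbf{1}[s\in A^l\pm\Delta_l(A^B_x)]\cdot f^{\pm}(x,s)$ to obtain
\begin{equation*}
|A|^{2l}\left|\sum_x q(x)\,|A^B_x|\right|^2\le\biggl(\sum_{x,s}|q(x)|^2\,\mathbf{1}[s\in A^l\pm\Delta_l(A^B_x)]\biggr)\cdot\biggl(\sum_{x,s}f^{\pm}(x,s)^2\biggr).
\end{equation*}
The first bracket collapses immediately to $\sum_x|q(x)|^2\,|A^l\pm\Delta_l(A^B_x)|$ by the support description above.

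For the second bracket I would expand $f^{\pm}(x,s)^2$ as a sum over pairs $b_1,b_2\in B$, swap the $(x,s)$--summations with the $(b_1,b_2)$--summations, and note that each of the $k+l$ coordinate sums (of the form $\sum_{x_i}A(b_1+x_i)A(b_2+x_i)$ or $\sum_{s_j}A(s_j-b_1)A(s_j-b_2)$) collapses to $(A\circ A)(b_2-b_1)$, with the evenness of $A\circ A$ absorbing the sign uniformly across the two cases. This gives
\begin{equation*}
\sum_{x,s}f^{\pm}(x,s)^2=\sum_w (B\circ B)(w)\,(A\circ A)(w)^{k+l}=\E_{k+l+1}(B,A)
\end{equation*}
by (\ref{f:E_k_preliminalies_B}), which is exactly the factor needed.

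The main obstacle is spotting the right auxiliary function $f^{\pm}(x,s)$: one wants its sum over $s$ to restore the missing $|A|^l$ factor while its $s$--support is precisely the sumset $A^l\pm\Delta_l(A^B_x)$ appearing on the right--hand side, so that the Cauchy--Schwarz step lands squarely on both targets simultaneously. The guess is motivated by the elementary Katz--Koester symmetry ``$x\in A-A_s$ iff $s\in A-A_x$'' promoted to higher arity; once the auxiliary is fixed, the rest is a direct Cauchy--Schwarz followed by a bookkeeping higher--energy computation that is insensitive to the $\pm$ sign.
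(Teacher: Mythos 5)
Your proof is correct and is essentially the paper's argument: the auxiliary function $f^{\pm}(x,s)$ is exactly $(A^k\circ\Delta_k(A^B_s))(x)$ (resp.\ its ``$+$'' analogue with $A^*_s=B\cap(s_1-A)\cap\dots\cap(s_l-A)$), your support identification is formula (\ref{f:x_in_A-A_s}), and your direct expansion of $\sum_{x,s}f^{\pm}(x,s)^2$ reproves Lemma \ref{l:E(A,A_s)_special}. The only cosmetic difference is that you apply one Cauchy--Schwarz in the joint variable $(x,s)$ where the paper iterates it (first in $x$ for fixed $s$, then in $s$), which yields the identical bound.
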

\begin{proof}
We have
\begin{equation}\label{f:first_p:weight}
    \sum_s \sum_{x} (A^k \circ \Delta (A^B_{s})) (x) q(x)
        = \sum_x q(x) \sum_s (A^k \circ \Delta (A^B_{s})) (x)
            = |A|^l \sum_x q(x) (A^k \circ \Delta (B)) (x) \,.
\end{equation}
Applying  Cauchy--Schwartz twice, Lemma \ref{l:E(A,A_s)_special} and formula (\ref{f:x_in_A-A_s}), we get
$$
    |A|^{2l} \left| \sum_x q(x) (A^k \circ \Delta (B)) (x) \right|^2
            \le
$$
$$
            \le
        \left( \sum_s \left( \sum_{x} (A^k - \Delta (A^B_{s})) (x) |q(x)|^2 \right)^{1/2}
            \cdot \left( \sum_x (A^k \circ \Delta (A^B_s))^2 (x) \right)^{1/2} \right)^2
    \le
$$
$$
    \le
        \sum_{x} |q(x)|^2 \sum_s (A^k - \Delta (A^B_{s})) (x) \cdot \sum_s \E (A^k, \Delta(A^B_s))
            =
                \sum_{x} |q(x)|^2 |A^l - \Delta(A^B_x)| \cdot \E_{k+l+1} (B,A)
                %\,.
$$
and formula (\ref{f:wight_1}) with minus
follows.
%is obtained.
%Considering
%The same arguments give
To
%obtain
get
the remain formula with plus consider $A^*_s = A^*_s (B) := B \cap (s_1-A) \cap \dots \cap (s_l-A)$ instead of $A^B_s$.
It is easy to see that formula (\ref{f:first_p:weight}) takes place for such sets.
%Further, as before by
Besides as in
Proposition \ref{p:characteristic}, we have
$x\in A^k-\Delta(A^*_s)$ iff $A^*_s \cap A-x_1 \cap \dots \cap A-x_k \neq \emptyset$
and further
iff $s\in A^l + \Delta(A^B_x)$.
Thus, we
%have
obtain
an analog of formula (\ref{f:x_in_A-A_s})
$$
    \sum_s (A^k - \Delta (A^*_{s})) (x) = |A^l+\Delta(A^B_x)| \,.
$$
Finally,
$$
    \sum_s \E (A^k, \Delta(A^*_s)) = \sum_z (A\circ A)^{k} (x) (B\c B) (x) (A\circ A)^{l} (-x) = \E_{k+l+1} (B,A)
$$
and the result is proved.
$\hfill\Box$
\end{proof}

\bigskip

Let us derive simple consequences  of the result above.
Consider the case $A=B$.
If we take $k=l=1$ and $q(x) = (A-A)(x)$ then we obtain Corollary 3.2 from \cite{ss} as well as Lemma 2.3 from \cite{sv}.
If we take $k=l=1$ and $q(x) = (A\circ A)^{1/2} (x)$ then we get Lemma 2.5 from \cite{Li}.
Let us derive further consequences.

\begin{corollary}
Let
    %$A,B\subseteq \Gr$ be two set,
    $A,B\subseteq \Gr$ be two sets, and $k$, $l$ be positive integers.
    Then
\begin{equation}\label{f:energy_wight_1}
    |A|^{2l} \E^2_{k+1} (B,A)
        \le
            \E_{k+l+1} (B,A) \cdot \sum_{x} |A^l \pm \Delta(A^B_x)| (A^k \circ \Delta_k (B))^2 (x)
\end{equation}
and
\begin{equation}\label{f:energy_wight_2}
    |A|^{2l} \sum_x \frac{(A^k \circ \Delta_k (B))^2 (x)}{|A^l \pm \Delta_l (A^B_x)|}
        \le
            \E_{k+l+1} (B,A) \,.
\end{equation}
\label{cor:energy_weight}
\end{corollary}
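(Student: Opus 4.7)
The plan is to derive both inequalities as specializations of Proposition \ref{p:weight} by making two judicious choices of the weight function $q$. First I need to identify the quantity $\sum_x (A^k \c \Delta_k(B))^2(x)$ appearing in the general statement. Unwinding the definition of the convolution $\c$ gives
$$
(A^k \c \Delta_k(B))(x_1,\dots,x_k) = \sum_{b\in B} A(b-x_1)\cdots A(b-x_k) = |A^B_{-x}|,
$$
so by (\ref{f:E_k_preliminalies_B}) one has $\sum_x (A^k \c \Delta_k(B))^2(x) = \sum_x |A^B_x|^2 = \E_{k+1}(B,A)$. This identity is the only nontrivial ingredient; the rest is bookkeeping.

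To obtain (\ref{f:energy_wight_1}) I substitute $q(x) := (A^k \c \Delta_k(B))(x)$ into (\ref{f:wight_1}). On the left-hand side the inner sum becomes $\sum_x (A^k \c \Delta_k(B))^2(x) = \E_{k+1}(B,A)$, producing the factor $|A|^{2l} \E^2_{k+1}(B,A)$. On the right-hand side $|q(x)|^2 = (A^k \c \Delta_k(B))^2(x)$, giving exactly the weighted sum in (\ref{f:energy_wight_1}). Both the plus and minus versions follow simultaneously since Proposition \ref{p:weight} provides both.

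To obtain (\ref{f:energy_wight_2}) I substitute the different weight
$$
q(x) := \frac{(A^k \c \Delta_k(B))(x)}{|A^l \pm \Delta_l(A^B_x)|}
$$
into (\ref{f:wight_1}). The left-hand side of (\ref{f:wight_1}) becomes
$$
|A|^{2l}\Bigl(\sum_x \tfrac{(A^k \c \Delta_k(B))^2(x)}{|A^l \pm \Delta_l(A^B_x)|}\Bigr)^2,
$$
while on the right-hand side the factor $|A^l \pm \Delta_l(A^B_x)|$ cancels one of the two copies of that denominator in $|q(x)|^2$, leaving
$$
\E_{k+l+1}(B,A) \cdot \sum_x \tfrac{(A^k \c \Delta_k(B))^2(x)}{|A^l \pm \Delta_l(A^B_x)|}.
$$
Dividing both sides by the common factor (which is strictly positive whenever the sum in question is nonzero; otherwise the conclusion is trivial) yields (\ref{f:energy_wight_2}).

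There is no real obstacle here: the only step requiring a moment of thought is verifying that $(A^k \c \Delta_k(B))(x)$ correctly encodes $|A^B_{-x}|$, so that the sum of its square is precisely the higher energy $\E_{k+1}(B,A)$. After that, (\ref{f:energy_wight_1}) is a direct substitution, and (\ref{f:energy_wight_2}) is a Cauchy--Schwarz-type cancellation trick applied on top of the already-proven Proposition \ref{p:weight}.
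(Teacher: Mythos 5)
Your proof is correct and follows essentially the same route as the paper: both inequalities are obtained from Proposition \ref{p:weight} by the same two choices of weight $q$, with the identity $\sum_x (A^k\circ\Delta_k(B))^2(x)=\E_{k+1}(B,A)$ (which the paper extracts from Corollary \ref{c:commutative_C}) supplying the left-hand side of the first bound, and the "optimal" weight plus division giving the second. Nothing is missing.
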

\begin{proof}
Taking $q(x) = (A^k \circ \Delta (B)) (x)$ and applying Corollary \ref{c:commutative_C}, we obtain the first formula.
Choosing $q(x)$ optimally, that is $$q(x) = \frac{(A^k \circ \Delta_k (B)) (x)}{|A^l \pm \Delta_l (A^B_x)|} \,,$$
 we get (\ref{f:energy_wight_2}).
$\hfill\Box$
\end{proof}

\bigskip

Until the end of the section suppose, for simplicity, that $B=A$.
%Because of
Corollary \ref{c:commutative_C} implies that
$\sum_x (A^k \circ \Delta (A))^2 (x) = \E_{k+1} (A)$.
Combining the identity with formula (\ref{f:energy_wight_2}),
we obtain
% the following holds

\begin{corollary}
\begin{equation}\label{tmp:14.10.2011_2}
    \sum_{x ~:~ |A^l \pm \Delta_l (A_x)| \ge \frac{|A|^{2l} \E_{k+1} (A)}{2 \E_{k+l+1} (A)}} (A^k \circ \Delta_k (A))^2 (x)
        \ge
            2^{-1} \E_{k+1} (A) \,.
\end{equation}
\end{corollary}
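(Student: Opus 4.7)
The plan is to run a standard dyadic/Markov-style argument using the identity $\sum_x (A^k \c \Delta_k(A))^2(x) = \E_{k+1}(A)$ (which is a direct application of Corollary \ref{c:commutative_C}, i.e.\ formula (\ref{f:scalar_C}) with all $f_i=g_i=A$) together with the weighted inequality (\ref{f:energy_wight_2}) specialised to $B=A$.

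Concretely, denote the threshold
\[
    T := \frac{|A|^{2l}\, \E_{k+1}(A)}{2\, \E_{k+l+1}(A)}
\]
and split the total sum as $S_{\mathrm{big}}+S_{\mathrm{small}}$, where $S_{\mathrm{big}}$ is the sum over $x$ with $|A^l\pm\Delta_l(A_x)|\ge T$ (the quantity we want to lower bound) and $S_{\mathrm{small}}$ is the sum over the remaining $x$. The plan is to bound $S_{\mathrm{small}}$ directly from above using (\ref{f:energy_wight_2}).

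On the set where $|A^l\pm\Delta_l(A_x)|<T$, we have $\frac{1}{|A^l\pm\Delta_l(A_x)|}>\frac{1}{T}$, so
\[
    \E_{k+l+1}(A) \;\ge\; |A|^{2l} \sum_{x} \frac{(A^k\c \Delta_k(A))^2(x)}{|A^l\pm\Delta_l(A_x)|}
    \;\ge\; \frac{|A|^{2l}}{T}\, S_{\mathrm{small}} \,.
\]
Plugging in the definition of $T$ this gives $S_{\mathrm{small}} \le \tfrac{1}{2}\E_{k+1}(A)$, and hence
\[
    S_{\mathrm{big}} \;=\; \E_{k+1}(A)-S_{\mathrm{small}} \;\ge\; \tfrac{1}{2}\E_{k+1}(A),
\]
which is exactly the claimed inequality.

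There is no real obstacle here: the corollary is a one-line Chebyshev-type consequence of the two ingredients that have just been proved, and the only cosmetic point to be careful about is that $\pm$ is handled uniformly since (\ref{f:energy_wight_2}) was established for both signs simultaneously in Corollary \ref{cor:energy_weight}.
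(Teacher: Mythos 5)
Your argument is correct and is exactly the paper's proof: the author likewise combines the identity $\sum_x (A^k \circ \Delta_k (A))^2 (x) = \E_{k+1}(A)$ from Corollary \ref{c:commutative_C} with formula (\ref{f:energy_wight_2}) for $B=A$, the Chebyshev-type splitting being left implicit. No discrepancies to report.
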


For example ($k=l=1$)
$$
    \sum_{x ~:~ |A\pm A_x| \ge 2^{-1} |A|^{2} \E (A) \E^{-1}_{3} (A)} |A_x|^2 \ge 2^{-1} \E(A) \,.
$$

Suppose that $\E_{k+l+1} (A) \ll |A|^{k+l+1}$.
Using a trivial bound $|A^l \pm \Delta(A_x)| \le |A|^{l} |A_x|$,
we see that the lower bound for $|A_x|$, deriving from (\ref{tmp:14.10.2011_2}),
namely, $|A_x| \ge 2^{-1} |A|^l \E_{k+1} (A) \E^{-1}_{k+l+1} (A)$
is potentially sharper then usual estimate $|A_x| \ge 2^{-1} \E_{k+1} (A) |A|^{-(k+1)}$,
which follows from the identity $\sum_{x} |A_x|^2 = \E_{k+1} (A)$.

The same arguments give

\begin{corollary}
\begin{equation}\label{f:cor_2_tmp}
    \sum_{x ~:~ |A^l \pm \Delta_l (A_x)| \ge (A^k \circ \Delta_k (A)) (x) \cdot \frac{|A|^{2l+k+1}}{2 \E_{k+l+1} (A)}}\,
            (A^k \circ \Delta_k (A)) (x)
        \ge
            2^{-1} |A|^{k+1} \,.
\end{equation}
\end{corollary}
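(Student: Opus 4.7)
The plan is to mimic the strategy behind the previous Corollary, but with a different choice of weight $q$ in Proposition \ref{p:weight}. Specifically, let
$$
    \Omega := \Big\{\, x \in \Gr^k \;:\; |A^l \pm \Delta_l(A_x)| < (A^k \c \Delta_k(A))(x) \cdot \frac{|A|^{2l+k+1}}{2\,\E_{k+l+1}(A)} \,\Big\}
$$
be the complement of the indexing set of the sum in (\ref{f:cor_2_tmp}); then the inequality to be proved becomes equivalent to
$$
    \sum_{x \in \Omega} (A^k \c \Delta_k(A))(x) \le 2^{-1}|A|^{k+1},
$$
provided we know that the total mass is $\sum_{x \in \Gr^k} (A^k \c \Delta_k(A))(x) = |A|^{k+1}$.

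First I would verify this total-mass identity by unfolding the definition: $(A^k \c \Delta_k(A))(x)$ counts tuples $(a, y_1, \dots, y_k) \in A \times A^k$ with $a - y_i = x_i$ for each $i$, and once $(a, y_1, \dots, y_k)$ is chosen, $x$ is uniquely determined, so summing over $x$ recovers $|A|^{k+1}$.

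Next I would plug $q(x) = \chi_\Omega(x)$ (the indicator of $\Omega$) into Proposition \ref{p:weight}, which gives
$$
    |A|^{2l} \Bigg(\sum_{x \in \Omega} (A^k \c \Delta_k(A))(x)\Bigg)^{\!2}
        \le \E_{k+l+1}(A) \cdot \sum_{x \in \Omega} |A^l \pm \Delta_l(A_x)|.
$$
Using the defining inequality on $\Omega$ to bound each term $|A^l \pm \Delta_l(A_x)|$ from above and cancelling a factor of $\sum_{x\in \Omega}(A^k \c \Delta_k(A))(x)$ from both sides (assuming it is nonzero, else there is nothing to do), the estimate collapses to
$$
    \sum_{x \in \Omega} (A^k \c \Delta_k(A))(x) \le \frac{|A|^{k+1}}{2},
$$
as desired. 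Subtracting from the total mass $|A|^{k+1}$ yields (\ref{f:cor_2_tmp}).

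There is no serious obstacle: the only thing to be careful about is the book-keeping of the $|A|$-powers when one cancels the factor of $\sum_{x \in \Omega}(A^k \c \Delta_k(A))(x)$, and the verification of the total-mass identity. Both cases (plus and minus) are handled simultaneously, since Proposition \ref{p:weight} gives both bounds with the same $q$.
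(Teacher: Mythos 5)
Your proposal is correct and is essentially the paper's argument: the paper derives this corollary by "the same arguments" as the preceding one, i.e.\ by applying Proposition \ref{p:weight} (equivalently, formula (\ref{f:energy_wight_2})) to the complement set where the stated lower bound on $|A^l \pm \Delta_l(A_x)|$ fails, and comparing with the total mass $\sum_x (A^k \circ \Delta_k(A))(x) = |A|^{k+1}$. Your choice $q=\chi_\Omega$ followed by the cancellation of one factor of $\sum_{x\in\Omega}(A^k\circ\Delta_k(A))(x)$ reproduces exactly the intended bound $\sum_{x\in\Omega}(A^k\circ\Delta_k(A))(x)\le 2^{-1}|A|^{k+1}$.
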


In the case $k=l=1$, we obtain
$$
    \sum_{x ~:~ |A\pm A_x| \ge |A_x| \cdot \frac{|A|^4}{2\E_3(A)}}\, |A_x| \ge 2^{-1} |A|^2 \,.
$$

Finally in the case $k=l=1$, let us obtain an useful corollary.

\begin{corollary}
    Let $\a$, $p$ be real numbers, $p>1$.
    Then
\begin{equation}\label{f:a_p}
    \sum_x |A_x|^\a \le \left( \frac{\E_3 (A)}{|A|^2} \right)^{1/p}
        \cdot
            \left( \sum_x |A\pm A_x|^{\frac{1}{p-1}} |A_x|^{\frac{\a p-2}{p-1}} \right)^{(p-1)/p} \,.
\end{equation}
%In particular
%\begin{equation}\label{f:a_p_3/2}
%    \E(A,A+A) \ge \frac{|A|^8}{|A+A|\E_3(A)} \,.
%\end{equation}
\label{cor:a_p}
\end{corollary}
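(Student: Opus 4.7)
The plan is to derive Corollary \ref{cor:a_p} as a direct consequence of the $k=l=1$, $B=A$ instance of Corollary \ref{cor:energy_weight} combined with a carefully chosen H\"older's inequality.

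First I would specialize formula (\ref{f:energy_wight_2}) to $k=l=1$ and $B=A$. In this case $\Delta_1(A) = A$, so $(A \circ \Delta_1(A))(x) = (A\circ A)(x) = |A_x|$ and $|A^1 \pm \Delta_1(A_x)| = |A \pm A_x|$. Hence (\ref{f:energy_wight_2}) reduces to the key inequality
\begin{equation}\label{f:key_heart}
    |A|^{2} \sum_x \frac{|A_x|^2}{|A \pm A_x|} \le \E_3(A) \,,
\end{equation}
which is exactly (\ref{f:heart}).

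Next I would factor $|A_x|^\a$ in the form $f(x) g(x)$ designed so that $f^p$ matches the summand in (\ref{f:key_heart}) and $g^{p/(p-1)}$ matches the remaining factor in the statement. Concretely, set
$$
    f(x) = \frac{|A_x|^{2/p}}{|A \pm A_x|^{1/p}} \,, \qquad g(x) = |A \pm A_x|^{1/p} \, |A_x|^{\a - 2/p} \,,
$$
so that $f(x) g(x) = |A_x|^\a$, while $f(x)^p = |A_x|^2/|A \pm A_x|$ and
$$
    g(x)^{p/(p-1)} = |A \pm A_x|^{1/(p-1)} \, |A_x|^{(\a p - 2)/(p-1)} \,.
$$

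Then I would apply H\"older's inequality with the conjugate exponents $p$ and $p/(p-1)$ to obtain
$$
    \sum_x |A_x|^\a \le \left( \sum_x \frac{|A_x|^2}{|A \pm A_x|} \right)^{1/p}
        \cdot \left( \sum_x |A \pm A_x|^{\frac{1}{p-1}} |A_x|^{\frac{\a p - 2}{p-1}} \right)^{(p-1)/p} \,,
$$
and substitute the bound (\ref{f:key_heart}) into the first factor to finish the proof. There is no real obstacle here; the only subtle point is the bookkeeping of exponents, in particular verifying that $(\a - 2/p) \cdot p/(p-1)$ equals $(\a p - 2)/(p-1)$, so that the second factor produced by H\"older coincides exactly with the one appearing on the right--hand side of (\ref{f:a_p}).
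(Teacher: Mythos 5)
Your proof is correct and is essentially the argument the paper intends: the corollary is stated right after the $k=l=1$ specializations of Corollary \ref{cor:energy_weight}, and it follows exactly as you say from inequality (\ref{f:heart}) (i.e.\ formula (\ref{f:energy_wight_2}) with $k=l=1$, $B=A$) combined with H\"older's inequality for the conjugate exponents $p$ and $p/(p-1)$. Your exponent bookkeeping checks out, so nothing further is needed.
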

%\begin{proof}
%Using H\"{o}lder and (\ref{f:heart}), we get (\ref{f:a_p}).
%Taking $\a=3$, $p=2$ and the sign $+$ in the last inequality, we obtain
%$$
%    \left( \sum_x |A_x|^{3/2} \right)^2
%        \le
%            \frac{\E_3 (A)}{|A|^2} \cdot \sum_x |A+A_x| |A_x|
%$$
%Applying H\"{o}lder once more, we have
%\begin{equation}\label{}
%    \frac{|A|^{3}}{|A+A|^{1/2}} \le \sum_x |A_x|^{3/2} \,.
%\end{equation}
%Combining the last two inequalities and using  Katz--Koester, we get
%$$
%    \frac{|A|^6}{|A+A|} \le \frac{\E_3 (A)}{|A|^2} \cdot \sum_x |A+A_x| |A_x|
%        \le
%            \frac{\E_3 (A)}{|A|^2} \cdot \E(A+A,A)
%$$
%and the result follows.
%$\hfill\Box$
%\end{proof}

%It was shown in \cite{ss2} that $\E(A,A-A) \ge \frac{|A|^8}{|A-A|\E_3(A)}$.
%Thus formula (\ref{f:a_p_3/2}) gives us the same for $A+A$.

\section{Eigenvalues of some operators}
\label{sec:eigenvalues}

%\bigskip

We make use of some operators, which were introduced in \cite{s}.
These operators have found some applications in additive combinatorics and number theory
(see \cite{s} and \cite{ss_E_k}).

\begin{definition}
Let $\Gr$ be an abelian group, and $\_phi,\psi$ be two complex functions.
By $\oT^{\_phi}_\psi$ denote the following operator on the space of functions $\Gr^{\mathbb{C}}$
\begin{equation}\label{F:T}
    (\oT^{\_phi}_\psi f ) (x) = \psi(x) (\FF{\_phi^c} * f) (x) \,,
    % = \psi(x) \FF{ \_phi^c \FF{f^c}} (x) \,,
\end{equation}
where $f$ is an arbitrary complex function on $\Gr$.
\end{definition}

Suppose that $\Gr$ is a finite abelian group, and $A\subseteq \Gr$ is a set.
Denote by $\ov{\oT}^{\_phi}_A$ the restriction of operator $\oT^{\_phi}_A$ onto the space of the functions
with supports on $A$.
%It was shown in \cite{s},
Recall some simple properties of operators $\ov{\oT}^{\_phi}_A$ which were obtained in \cite{s}.
%in particular,
%that
First of all, it was proved, in particular,
that
operators $\oT^{\_phi}_A$ and $\ov{\oT}^{\_phi}_A$ have the same non--zero eigenvalues.
Second of all, if $\_phi$ is a real function then the operator $\ov{\oT}^{\_phi}_{A}$ is
symmetric (hermitian)
and
if $\_phi$ is a nonnegative function then the operator is nonnegative definite.
%Formula (\ref{F:T_action}) can be rewritten for operator $\ov{\oT}^{\_phi}_{S}$  as
The action of $\ov{\oT}^{\_phi}_{A}$ can be written as
\begin{equation}\label{F:T_S_action}
    \langle \ov{\oT}^{\_phi}_A u, v \rangle
        =
            \sum_x (\FF{\_phi^c} * u) (x) \ov{v} (x)
                =
                    \sum_x \FF{\_phi^c} (x) (u \c \ov{v}) (x)
                =
                    \sum_x \_phi (x) \FF{u} (x) \ov{\FF{v} (x)} \,,
\end{equation}
where $u,v$
are
arbitrary
functions such that $\supp u, \supp v \subseteq A$.
Further
\begin{equation}\label{F:trace}
    \tr ( \ov{\oT}^{\_phi}_{A} )
        =
            |A| \FF{\_phi} (0)
                =
                    \sum_{j=1}^{|A|} \mu_j ( \ov{\oT}^{\_phi}_{A} )
                        =
                            \sum_{j=1}^{|\Gr|} \mu_j ( \oT^{\_phi}_{A} ) \,.
\end{equation}
If $\_phi$ is a real function then as was noted before
$\ov{\oT}^{\_phi}_{A}$ is a symmetric matrix.
In particular, it is a normal matrix and we get
\begin{equation}\label{F:trace_sq}
    \tr ( \ov{\oT}^{\_phi}_{A} ( \ov{\oT}^{\_phi}_{A} )^*)
        =
            \sum_z |\FF{\_phi} (z)|^2 (A \circ A) (z)
                =
                    \sum_z (\_phi \circ \_phi) (z) |\FF{A} (z)|^2
                =
                    \sum_{j=1}^{|A|} \mu^2_j ( \ov{\oT}^{\_phi}_{A} )
                        =
                            \sum_{j=1}^{|\Gr|} \mu^2_j ( \oT^{\_phi}_{A} ) \,.
\end{equation}
We will deal with just nonnegative definite symmetric operators.
In the case we arrange the eigenvalues in order of magnitude
$$
    \mu_0 (\ov{\oT}^{\_phi}_{A}) \ge \mu_1 (\ov{\oT}^{\_phi}_{A}) \ge \dots \ge \mu_{|A|-1} (\ov{\oT}^{\_phi}_{A}) \,.
$$
Further properties of such  operators  can be found in \cite{s}.
The connection of such operators with higher energies $\E_k (A)$ is discussed in \cite{ss_E_k}.

\bigskip

Now we consider the situation when $A$ equals some multiplicative subgroup.
It turns out that in this case we know all eigenvalues $\mu_j$ as well as all
eigenfunctions.

Let $p$ be a prime number, $q=p^s$ for some integer $s \ge 1$.
Let
$\F_q$ be the field with  $q$ elements, and let $\Gamma\subseteq \F_q$ be a multiplicative subgroup.
We will write $\F^*_q$ for $\F_q\setminus \{ 0 \}$.
Denote by $t$ the cardinality of $\Gamma$, and put $n=(q-1)/t$. Let
also $\mathbf{g}$ be a primitive root, then $\Gamma = \{ \mathbf{g}^{nl} \}_{l=0,1,\dots,t-1}$.
Let $\chi_\a (x)$, $\a \in [t]$ be the
%orthogonal
orthonormal
family of multiplicative characters on $\Gamma$, that is
\begin{equation}\label{f:chi_Gamma}
    \chi_\a (x) = |\G|^{-1/2} \cdot \Gamma(x) e\left( \frac{\a l}{t} \right) \,, \quad x=\mathbf{g}^{nl} \,, \quad 0\le l < t \,.
\end{equation}
Clearly, products of such functions form a basis on Cartesian products of $\G$.

The following proposition was obtained, basically,
in \cite{ss_E_k} (except formula (\ref{f:C_3_lower_bound})).
We recall the proof for the sake of completeness.

\begin{proposition}
    Let $\Gamma\subseteq \F^*_q$ be a multiplicative subgroup.
%    $\Q \subseteq \F^*_q$ be any $\Gamma$---invariant set.
    If $\psi$ is an arbitrary $\G$--invariant function
    then the functions $\chi_\a (x)$ are eigenfunctions of the operator $\ov{\oT}^{\FF{\psi}}_{\Gamma}$.
    %Suppose that
    Suppose, in addition, that
 %   $\psi(x)=\psi(-x)$ and
    $\FF{\psi} (x) \ge 0$.
       %Moreover
   Then for any functions $u : \F_q \to \C$ and $v : \F_q \to \R^+$ the following holds
   \begin{equation}\label{f:C_3_lower_bound}
        \sum_{x,y\in \G} \psi(x-y) \Cf_3 (v,\ov{u}, u) (x,y)
            \ge
                |\Gamma|^{-2} \sum_{x} \psi (x) (\Gamma \circ \Gamma) (x)
                    \cdot
                        \sum_{x,y \in \G} \Cf_3 (v,\ov{u}, u) (x,y) \,.
   \end{equation}
    In particular,
    for any function $u$ with support on $\Gamma$, we have
    \begin{equation}\label{f:connected}
        \sum_{x} \psi (x) (u \circ \ov{u}) (x)
            \ge
                |\Gamma|^{-2}
                    \sum_{x} \psi (x) (\Gamma \circ \Gamma) (x)
                        \cdot
                    \Big| \sum_{x\in \Gamma} u(x) \Big|^2 \,.
   \end{equation}
    \label{p:eigenfunctions_Gamma}
\end{proposition}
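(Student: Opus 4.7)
The approach is spectral: decompose in the eigenbasis of $\ov{\oT}^{\FF{\psi}}_{\G}$, which is non-negative definite under the hypothesis $\FF{\psi}\ge 0$ as recalled in Section~\ref{sec:eigenvalues}. I would first check that each $\chi_\alpha$ is an eigenfunction of this operator when $\psi$ is $\G$-invariant. Using $\FF{\FF{\psi}^c}=N\psi$, the operator restricted to $\G$-supported functions sends $f$ to the function $x\mapsto N\G(x)\sum_{y\in\G}\psi(x-y)f(y)$. Substituting $y=xw$ with $w\in\G$, the $\G$-invariance of $\psi$ gives $\psi(x-xw)=\psi(1-w)$, while the multiplicative identity $\chi_\alpha(xw)=|\G|^{1/2}\chi_\alpha(x)\chi_\alpha(w)$ (immediate from the definition~(\ref{f:chi_Gamma})) lets me factor $\chi_\alpha(x)$ out. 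Setting $T:=N^{-1}\ov{\oT}^{\FF{\psi}}_{\G}$, this shows $T\chi_\alpha=\lambda_\alpha\chi_\alpha$ with $\lambda_\alpha=|\G|^{1/2}\sum_{w\in\G}\psi(1-w)\chi_\alpha(w)$. The $\chi_\alpha$, $\alpha\in[t]$, are $|\G|$ orthonormal characters of cyclic $\G$, hence a complete orthonormal basis, and $\lambda_\alpha\ge 0$ for every $\alpha$ by non-negative definiteness.

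Next I would pin down the trivial eigenvalue corresponding to $\chi_0=|\G|^{-1/2}\G$. By the same $\G$-invariance trick, $\sum_{y\in\G}\psi(x-y)$ is constant in $x\in\G$, and a short computation gives $\sum_x\psi(x)(\G\circ\G)(x)=\sum_{y\in\G}\sum_{z\in\G}\psi(z-y)=|\G|\cdot\lambda_0$. Hence $\lambda_0=|\G|^{-1}\sum_x\psi(x)(\G\circ\G)(x)$, which is precisely the coefficient appearing in~(\ref{f:C_3_lower_bound}).

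For the main inequality, swap the order of summation to write its left side as $\sum_z v(z)\,\langle TU_z,U_z\rangle$, where $U_z(x):=u(z+x)$ for $x\in\G$. Expanding $U_z=\sum_\alpha c_{\alpha,z}\chi_\alpha$ gives $\langle TU_z,U_z\rangle=\sum_\alpha\lambda_\alpha|c_{\alpha,z}|^2$, and using $\lambda_\alpha\ge 0$ together with $v\ge 0$ I drop every $\alpha\neq 0$ term to obtain the lower bound $\lambda_0\sum_z v(z)|c_{0,z}|^2$. Since $c_{0,z}=\langle U_z,\chi_0\rangle=|\G|^{-1/2}\sum_{x\in\G}u(z+x)$ and $\sum_{x,y\in\G}\Cf_3(v,\ov u,u)(x,y)=\sum_z v(z)\bigl|\sum_{x\in\G}u(z+x)\bigr|^2$, the factor $|\G|^{-1}$ inside $|c_{0,z}|^2$ combines with $\lambda_0$ to produce exactly $|\G|^{-2}\sum_x\psi(x)(\G\circ\G)(x)$, yielding~(\ref{f:C_3_lower_bound}). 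The special case~(\ref{f:connected}) then drops out by taking $v=\delta_0$, which collapses $\Cf_3(\delta_0,\ov u,u)(x,y)$ to $\ov u(x)u(y)$ and (for $u$ supported on $\G$) turns the left side into $\sum_z\psi(z)(u\circ\ov u)(z)$.

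The main obstacle will be normalization bookkeeping: the factor $N$ arising from $\FF{\FF{\psi}^c}=N\psi$, the $|\G|^{-1/2}$ in the definition of $\chi_\alpha$, and the $|\G|^{1/2}$ in the multiplicativity identity $\chi_\alpha(xw)=|\G|^{1/2}\chi_\alpha(x)\chi_\alpha(w)$ must all cancel cleanly through the spectral decomposition so that the final coefficient appears as $|\G|^{-2}\sum_x\psi(x)(\G\circ\G)(x)$ without stray powers of $N$ or $|\G|$.
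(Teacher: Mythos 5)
Your proposal is correct and follows essentially the same spectral argument as the paper: the same multiplicativity computation shows the $\chi_\a$ are eigenfunctions, and your step of applying the non-negative definite quadratic form to the translates $U_z(x)=u(z+x)$ and averaging against $v\ge 0$ is just a reorganization of the paper's expansion of $\Cf_3 (v,\ov{u},u)(x,y)$ in the product basis $\chi_\a(x)\chi_\beta(y)$, where the diagonal coefficients are shown to equal $\sum_z v(z)|(\chi_\a \c u)(z)|^2\ge 0$. The normalization bookkeeping you flag does work out, and the identification $\lambda_0=|\G|^{-1}\sum_x\psi(x)(\G\c\G)(x)$ together with the deduction of (\ref{f:connected}) via $v=\d_0$ matches the paper exactly.
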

\begin{proof}
We have to show that
$$
    \mu f(x) = \Gamma(x) (\psi * f) (x) \,, \quad \mu \in \mathbb{C}
$$
for $f(x) = \chi_\a (x)$.
%By the assumption $\_phi (x)$ is a $\Gamma$--invariant function, whence so is $\FF{\_phi^c}$.
%Thus,
For every $\gamma\in \Gamma$, we obtain
\begin{eqnarray}\label{f:Schoen}
    (\psi * f) (\gamma x) &=& \sum_z f(z) \psi (\gamma x -z) = \sum_z f(\gamma z) \psi
    (\gamma x -\gamma z)\\
    &=&
    f(\gamma) \cdot \sum_z f(z) \psi (x-z) = f(\gamma) \cdot (\psi * f) (x)
    %\,.
\end{eqnarray}
%Finally,
as required.

Formula (\ref{f:connected}) follows from (\ref{f:C_3_lower_bound}) if one take $v=\d_0$.
We give another independent proof.
Because of $\FF{\psi} (x) \ge 0$ the operator $\ov{\oT}^{\FF{\psi}}_{\Gamma}$
is symmetric and nonnegative definite.
Thus all its eigenvalues are nonnegative.
Put $\_phi = q^{-1} \FF{\psi}$.
If $u=\sum_{\a} c_\a \chi_\a$ then
$$
    \langle \ov{\oT}^{\_phi}_{\Gamma} u, u \rangle
        =
            \sum_{x} \psi (x) (u \circ \ov{u}) (x)
                =
                    \sum_\a |c_\a|^2 \mu_\a (\ov{\oT}^{\_phi}_{\Gamma})
                        \ge
                            |\Gamma|^{-2} \langle u, \Gamma \rangle^2
                                \sum_{x} \psi (x) (\Gamma \circ \Gamma) (x)
$$
and we obtain (\ref{f:connected}).

%It is remain to prove (\ref{f:C_3_lower_bound}),
%because it implies formula (\ref{f:connected}) if we take $v=\d_0$.
Finally, for any function $F : \G\m \G \to \C$, we have
$$
    F(x,y) = \sum_{\a,\beta} c_{\a,\beta} (F) \chi_\a (x) \chi_\beta (y) \,.
$$
Thus
$$
    \sum_{x,y} F(x,y) \psi(x-y) = \sum_\a \mu_\a \cdot c_{-\a,\a} (F)
$$
and we just need to check that $c_{-\a,\a} (F) \ge 0$ for $F(x,y)=\Cf_3 (v,\ov{u},u) (x,y)$.
By assumption $v\ge 0$.
Hence by Corollary \ref{c:commutative_C}
\begin{equation}\label{tmp:21.07.2012_1'}
    c_{-\a,\a} (F) = \sum_{x,y} F(x,y) \ov{\chi_\a (x)} \chi_\a (y) = \sum_z v(z) |(\chi_\a \c u)|^2 (z) \ge 0
\end{equation}
and the result follows.
%We use some facts from paper \cite{s}.
$\hfill\Box$
\end{proof}

\bigskip

In particular, for any $k\ge 1$
\begin{equation}\label{f:E_for_subgroups}
    \E_{k+1} (\G) = \max_{ f~:~ \supp f\subseteq \G,\, \| f \|_2^2 = |\G|}\,\,\, \sum_x (\G \c \G)^k (x) (f \c f) (x) \,.
\end{equation}

\begin{remark}
    It is not difficult to replace a multiplicative subgroup $\G$
    in the previous proposition onto arbitrary
    coset
    %$\G$--invariant set $Q$
    (see \cite{ss_E_k}).
    Indeed,
    for every $\xi \in \F^*_q / \Gamma$ and $\a\in [|\Gamma|],$ let us
define the functions $\chi^{\xi}_\a (x) := \chi_\a (\xi^{-1} x).$
Then, clearly,   $\supp \chi^{\xi}_\a=\xi \cdot \Gamma$ and
$\chi^{\xi}_\a (\gamma x) = \chi_\a (\gamma) \chi^{\xi}_\a (x)$ for all
$\gamma \in \Gamma$. Using the argument from  Proposition
\ref{p:eigenfunctions_Gamma} it is easy to see that the functions
$\chi^{\xi}_\a$ are orthonormal eigenfunctions of the operator
$\ov{\oT}^{\FF{\psi}}_{\xi \Gamma}$.
Thus, we can replace $\G$ onto $\xi \Gamma$.
%Finally, the functions $\chi^{\xi}_\a$, $\xi \in Q/\G$, $\a \in [|\G|]$ form
%the complete orthonormal family of eigenfunctions of the operator $\ov{\oT}^{\FF{\psi}}_{Q}$.
\label{note:cosets_eigen}
\end{remark}

Proposition \ref{p:eigenfunctions_Gamma} has an interesting corollary about
Fourier coefficients of functions with supports on $\G$.
In particular, it gives exact formula for exponential sums over multiplicative subgroups.

\begin{corollary}
    Let $\Gamma\subseteq \F^*_q$ be a multiplicative subgroup.
    Suppose that $u$ is a function with support on $\G$.
    Then for any $\lambda \in \F_q$ the following holds
\begin{equation}\label{f:exact_Fourier_formula}
    |\FF{u} (\lambda)|^2 = |\G|^2 \cdot \min_{h}
        \frac{\sum_x |\FF{h} (x)|^2 |\FF{u}(x+\lambda)|^2}{\sum_x |\FF{h} (x)|^2 |\FF{\G}(x)|^2} \,,
\end{equation}
    and, in addition,  for any $v:\F_q \to \R^{+}$, we have
\begin{equation}\label{f:exact_Fourier_formula_C_3}
    \sum_{x,y \in \G} \Cf_3 (v,u,\ov{u}) (x,y)
        = |\G|^2 \cdot \min_{h}
        \E^{-1}(h,\G)  \cdot \sum_{x,y\in \G} (h\circ \ov{h}) (x-y) \Cf_3 (v,u,\ov{u}) (x,y) \,,
\end{equation}
    where the minimum is taken over all nonzero $\G$--invariant functions.
    % $q$.
\end{corollary}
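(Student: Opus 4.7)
The plan is to derive both identities from Proposition \ref{p:eigenfunctions_Gamma}: its inequalities (\ref{f:connected}) and (\ref{f:C_3_lower_bound}) will give the one-sided $\le$-half of each $\min$-identity for an arbitrary $\G$-invariant $h$, and the constant choice $h \equiv 1$ will saturate the bound. Thus the min is actually attained and equals the claimed quantity.

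For (\ref{f:exact_Fourier_formula}), I would first modulate, replacing $u$ by $u'(x) := u(x) e(-\lambda x)$; this still has support on $\G$ and satisfies $\FF{u'}(0) = \FF{u}(\lambda)$. Given any nonzero $\G$-invariant $h$, set $h_*(x) := \ov{h(-x)}$ and $\psi := h * h_*$. A short change-of-variable argument (exploiting $\gamma \G = \G$) shows $\psi$ is $\G$-invariant, while (\ref{f:F_svertka}) yields $\FF{\psi} = |\FF{h}|^2 \ge 0$, so Proposition \ref{p:eigenfunctions_Gamma} applies. Feeding $(\psi, u')$ into (\ref{f:connected}) and transferring both sides to the Fourier side via Parseval, using $(u' \c \ov{u'})(x) = e(\lambda x)(u \c \ov{u})(x)$, yields
$$
    \sum_\xi |\FF{h}(\xi)|^2 |\FF{u}(\xi + \lambda)|^2 \ge |\G|^{-2} |\FF{u}(\lambda)|^2 \sum_\xi |\FF{h}(\xi)|^2 |\FF{\G}(\xi)|^2,
$$
which after rearrangement is exactly the $\le$ direction of (\ref{f:exact_Fourier_formula}). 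To realise equality, take $h$ to be any nonzero constant $c$: then $\FF{h}$ is supported only at $\xi = 0$, both sums collapse to their $\xi = 0$ contributions, and the ratio reduces to $|\FF{u}(\lambda)|^2 / |\G|^2$.

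Formula (\ref{f:exact_Fourier_formula_C_3}) is handled in the same spirit, now with $\psi := h \circ \ov{h}$, which is again $\G$-invariant and satisfies $\FF{\psi} = |\FF{h}|^2 \ge 0$. Inserting this $\psi$ into (\ref{f:C_3_lower_bound}), observing the symmetry $\sum_{x, y \in \G} \Cf_3(v, u, \ov{u})(x,y) = \sum_{x,y \in \G} \Cf_3(v, \ov{u}, u)(x,y)$ (exchange $x \leftrightarrow y$ in the sum), and using the Parseval identity $\sum_x (h \circ \ov{h})(x)(\G \circ \G)(x) = \E(h, \G)$, one obtains the asserted $\le$-inequality. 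Choosing $h \equiv 1$ makes $(h \circ \ov{h})(x - y) \equiv q$ and $\E(h, \G) = q |\G|^2$, so the right-hand side collapses to exactly $\sum_{x,y \in \G} \Cf_3(v, u, \ov{u})(x, y)$, producing equality. The single nontrivial insight in both parts is recognising that the constant function realises the minimum; once that is noted, everything else is bookkeeping in $\FF{}$, together with the two routine checks of $\G$-invariance and Fourier-positivity for the autocorrelation-type $\psi$'s.
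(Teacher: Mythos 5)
Your proof is correct and follows essentially the same route as the paper's: apply Proposition \ref{p:eigenfunctions_Gamma} with an autocorrelation-type $\G$-invariant weight $\psi$ (so $\FF{\psi}\ge 0$), pass to the Fourier side via Parseval, and observe that the constant function $h$ attains the minimum. The only cosmetic difference is that you take $\psi = h*h_*$ where the paper writes $\psi = h\circ\ov{h}$, and in the $\Cf_3$ case it is cleaner to simply apply (\ref{f:C_3_lower_bound}) with $u$ replaced by $\ov{u}$ rather than invoking the $x\leftrightarrow y$ symmetry (which also conjugates the $\psi$-factor); neither point affects the validity of the argument.
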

\begin{proof}
Taking $\psi = h\circ \ov{h}$ in formula (\ref{f:connected}) of Proposition \ref{p:eigenfunctions_Gamma}
and using Fourier transform,
we obtain that
\begin{equation}\label{tmp:06.11.2011_1}
    |\sum_{z\in \G} u(z)|^2 \le |\G|^2 \cdot \min_{h}
        \frac{\sum_x |\FF{h} (x)|^2 |\FF{u}(x)|^2}{\sum_x |\FF{h} (x)|^2 |\FF{\G}(x)|^2}
\end{equation}
for any function $u$ with support on $\G$.
Considering $h \equiv 1$ we make sure that formula (\ref{tmp:06.11.2011_1}) is actually equality.
Now taking $u(x) e(-\lambda x)$ instead of $u(x)$, we have formula (\ref{f:exact_Fourier_formula}).
Equality (\ref{f:exact_Fourier_formula_C_3}) is a consequence of (\ref{f:C_3_lower_bound})
and can be obtained by similar arguments.
This completes the proof.
$\hfill\Box$
\end{proof}

Let $g : \F_q \to \C$ be a $\G$---invariant function.
It is convenient to write $\mu_\a (g)$ for $\mu_\a (\oT^{q^{-1} \FF{g}}_\G)$.
%Clearly,
It is easy to see that
$\ov{\mu_\a (g)} = \mu_\a (\ov{g}^c) = \mu_{-\a} (\ov{g})$.
Multiplicative properties of the functions $\chi_\a$
allow us to prove formula (\ref{f:conv_e_1}) below,
which shows that the numbers $\mu_\a (\ov{g}h)$ and $\mu_\a (g)$, $\mu_\a (h)$ are
%closely
connected.

\begin{proposition}
    Let $g,h : \F_q \to \C$ be two $\G$---invariant functions.
    Then
\begin{equation}\label{f:conv_e_1}
    \mu_\a (\ov{g} h) = \frac{1}{|\G|} \sum_\beta \ov{\mu}_\beta (g) \mu_{\a+\beta} (h) = (\mu(\ov{g}) * \mu (h)) (\a) \,,
\end{equation}
    and
\begin{equation}\label{f:conv_e_2}
    \mu_\a (g) = |\G|^{1/2} \sum_x g(x) \chi_\a (1-x) \,.
\end{equation}
\label{p:convolution_of_eigenvalues}
\end{proposition}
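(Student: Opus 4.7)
The plan is to treat \eqref{f:conv_e_2} as a bridge: first derive it by identifying $\chi_\a$ as an eigenfunction and evaluating at a convenient point, and then derive \eqref{f:conv_e_1} by substituting \eqref{f:conv_e_2} into both sides and invoking orthogonality of characters on $\Z/t\Z$.

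\emph{Step 1 (proof of \eqref{f:conv_e_2}).} A direct Fourier inversion shows $\FF{q^{-1}\FF{g}^c}(y) = g(y)$, so by \eqref{F:T} the operator $\ov{\oT}^{q^{-1}\FF{g}}_{\G}$ sends a function $f$ supported on $\G$ to $x \mapsto \G(x)(g*f)(x)$. Proposition \ref{p:eigenfunctions_Gamma} (applied to the $\G$-invariant function $q^{-1}\FF{g}$ in place of $\psi$) identifies $\chi_\a$ as an eigenfunction; call its eigenvalue $\mu_\a(g)$. Evaluating the resulting identity $(g*\chi_\a)(x) = \mu_\a(g)\chi_\a(x)$, valid for all $x \in \G$, at $x=1$ and using $\chi_\a(1) = |\G|^{-1/2}$ from \eqref{f:chi_Gamma} yields \eqref{f:conv_e_2} at once.

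\emph{Step 2 (proof of \eqref{f:conv_e_1}).} Substituting \eqref{f:conv_e_2} into each factor on the right-hand side of \eqref{f:conv_e_1} and interchanging the order of summation gives
$$
\frac{1}{|\G|}\sum_\beta \ov{\mu}_\beta(g)\mu_{\a+\beta}(h) = \sum_{y,z} \ov{g}(y)h(z)\, S_\a(y,z),
$$
where $S_\a(y,z) := \sum_\beta \ov{\chi_\beta(1-y)}\,\chi_{\a+\beta}(1-z)$. Using $\ov{\chi_\beta(w)} = \chi_{-\beta}(w)$ on $\G$ together with the explicit description \eqref{f:chi_Gamma}, the sum over $\beta$ becomes a geometric series on $\Z/t\Z$ which vanishes unless $1-y$ and $1-z$ lie in $\G$ and represent the same power of the primitive root; in that case its value is $|\G|^{1/2}\chi_\a(1-y)$. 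Hence the double sum collapses to $|\G|^{1/2}\sum_y \ov{g}(y)h(y)\chi_\a(1-y)$, which by \eqref{f:conv_e_2} applied to the $\G$-invariant function $\ov{g}h$ is exactly $\mu_\a(\ov{g}h)$, as required.

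\emph{Main obstacle.} Conceptually, \eqref{f:conv_e_1} is just the statement that $g \mapsto \mu_\cdot(g)$ is a Fourier-type transform on the space of $\G$-invariant functions which converts pointwise multiplication into convolution on the dual group $\Z/t\Z$; the proof collapses to orthogonality of the additive characters $\beta \mapsto e(\beta l / t)$. The only genuinely delicate point is the bookkeeping of the $|\G|^{1/2}$ normalization factors coming from \eqref{f:chi_Gamma} and from the twisted multiplicativity $\chi_\a(xy) = |\G|^{1/2}\chi_\a(x)\chi_\a(y)$ valid for $x,y\in \G$; once those constants are tracked consistently, both steps reduce to routine manipulations.
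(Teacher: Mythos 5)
Your proof is correct, but it runs in the opposite logical direction from the paper's. The paper establishes (\ref{f:conv_e_1}) first, by expanding both $\ov{\mu}_\beta(g)$ and $\mu_{\a+\beta}(h)$ as sums $\sum_x g(x)(\chi_\beta\c\ov{\chi}_\beta)(x)$, summing over $\beta$, and analyzing when the resulting four-fold character sum is nonzero (the condition $(z+x)/z=(w+y)/w$, which reduces to $xy^{-1}\in\G$ and lets $\G$--invariance collapse the double sum to the diagonal); it then obtains (\ref{f:conv_e_2}) by computing the Fourier transform of $\a\mapsto\mu_\a(g)$ on $\Z/t\Z$ and inverting. You instead prove (\ref{f:conv_e_2}) first, by evaluating the eigenfunction identity $\G(x)(g*\chi_\a)(x)=\mu_\a(g)\chi_\a(x)$ at $x=1$ — this is exactly the alternative route the paper alludes to when it says one could "use formula (\ref{f:Schoen})" but does not write out — and then deduce (\ref{f:conv_e_1}) by substituting the explicit formula and using orthogonality of the additive characters $\beta\mapsto e(\beta l/t)$, so that the coset-counting argument of the paper is replaced by the trivial observation that the geometric series forces $1-y=1-z$. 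Your route is arguably cleaner: once $\mu_\a$ is recognized as a character transform of $g$ restricted to $1-\G$, the convolution identity is pure duality, whereas the paper's direct computation of (\ref{f:conv_e_1}) needs the slightly fiddly function $\varpi(x,y,w)$. One small imprecision: Proposition \ref{p:eigenfunctions_Gamma} should be invoked with $\psi=g$ (giving the operator $\ov{\oT}^{\FF{g}}_\G$, a scalar multiple of $\ov{\oT}^{q^{-1}\FF{g}}_\G$), not with "$q^{-1}\FF{g}$ in place of $\psi$"; but since your opening computation $\FF{q^{-1}\FF{g}^c}=g$ already identifies the operator's action correctly, this is only a matter of wording, not a gap.
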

\begin{proof}
%It is convenient to replace $g\to g^c$ and $h\to h^c$.
We have
$$
    \frac{1}{|\G|} \sum_\beta \ov{\mu}_\beta (g) \mu_{\a+\beta} (h)
        =
            \frac{1}{|\G|} \sum_{x,y} \ov{g}(x) h(y)
                \sum_\beta (\ov{\chi}_\beta \c \chi_\beta )(x)
                    (\chi_{\a+\beta} \c \ov{\chi}_{\a+\beta} )(y)
                        =
$$
$$
                        =
     \frac{1}{|\G|} \sum_{x,y} \ov{g}(x) h(y)
        \sum_{z,w\in \Gamma}
            \sum_\beta \ov{\chi}_\beta (z) \chi_\beta (z+x) \chi_{\a+\beta} (w) \ov{\chi}_{\a+\beta} (w+y)
                =
$$
$$
    =
        \frac{1}{|\G|} \sum_{x,y} \ov{g}(x) h(y)
            \sum_{w \in \G} \chi_{\a} (w) \ov{\chi}_{\a} (w+y) \varpi (x,y,w) \,,
$$
where
$
    \varpi (x,y,w)
$
equals $1$ iff $w,w+y \in \G$ and, more importantly, $(z+x)/z = (w+y)/w$ for some $z$ such that $z,z+x \in \G$.
It is easy to see that the last situation appears exactly when $xy^{-1} \in \G$, provided by $y\neq 0$.
Besides $y=0$ iff $x=0$.
Thus by $\G$--invariance of the function $g$
$$
    \frac{1}{|\G|} \sum_\beta \ov{\mu}_\beta (g) \mu_{\a+\beta} (h)
        =
            \ov{g}(0) h(0)
                +
                    \frac{1}{|\G|} \sum_{x \neq 0,\, y \neq 0} \ov{g}(x) h(y)
                        \G(xy^{-1}) (\chi_\a \c \ov{\chi}_\a)(y)
    =
$$
$$
    =
        \ov{g}(0) h(0) + \sum_{y \neq 0} \ov{g}(y) h(y) (\chi_\a \c \ov{\chi}_\a)(y)
            =
                \sum_{y} \ov{g}(y) h(y) (\chi_\a \c \ov{\chi}_\a)(y)
                    =
                        \mu_\a (\ov{g} h)
$$
and we obtain formula (\ref{f:conv_e_1}).

One can derive (\ref{f:conv_e_2}) from (\ref{f:conv_e_1}).
Another way is to use formula (\ref{f:Schoen}) of Proposition \ref{p:eigenfunctions_Gamma}.
We propose
%another
one more
%way.
variant.
%To prove (\ref{f:conv_e_2})
Consider $\mu_\a (g) = f(\a)$ as a function on $\a$
and compute the Fourier transform of $f$.
Now write $e(x)$ for $e^{2\pi i x/|\G|}$.
We have for $\a \neq 0$
$$
    \FF{f} (\a) = \sum_\beta \sum_x g(x) \sum_z \chi_\beta (z-x) \ov{\chi}_\beta (z) e(-\a \beta)
        =
            \sum_x g(x) \G(x (1-\mathbf{g}^{n\a})^{-1}) =
$$
$$
    =
        \sum_x g(x (1-\mathbf{g}^{n\a})) \G(x) = |\G| g(1-\mathbf{g}^{n\a}) \,.
$$
%Note that
Besides
the last formula holds
%for
in the case
$\a=0$ because we have general identity (\ref{F:trace}).
Finally, using the inverse formula (\ref{f:inverse}), we obtain
$$
    \mu_\a (g) = \sum_\beta g(1-\mathbf{g}^{n\beta}) e(\a \beta)
        = |\G|^{1/2} \sum_x g(1-x) \chi_\a (x) = |\G|^{1/2} \sum_x g(x) \chi_\a (1-x) \,.
$$
This completes the proof.
$\hfill\Box$
\end{proof}

In particular, taking $\a=0$, $l=2$ and $g=h$ in formula (\ref{f:conv_e_1}),
%(\ref{f:mu_l_&_mu}),
we obtain formula (\ref{F:trace_sq}) for operators $\ov{\oT}^{\_phi}_\G$,
where $\_phi (x) = q^{-1}\FF{g}$ and $\G$ is a multiplicative subgroup.

\bigskip

\begin{corollary}
    Let $g : \F_q \to \R$ be a $\G$--invariant function.
    Put $\mu (\a) = \mu_\a (g)$.
    Then for all positive integers $l$, we have
\begin{equation}\label{f:mu_l_&_mu}
    \mu_\a (g^l) = (\mu *_{l-1} \mu  ) (\a) \,,
\end{equation}
and
\begin{equation}\label{}
    g^l (x-y) = \sum_\a (\mu *_{l-1} \mu  ) (\a) \chi_\a (x) \ov{\chi_\a (y)} \,,\quad x,y\in \G \,,
\end{equation}
where $*$ the normalized convolution
%is taken
over $|\G|$.
In particular, numbers $\E(\G,\chi_\a)$, $\a\in [|\G|]$ determine $\E_l (\G)$ for all $l \ge 2$.
\end{corollary}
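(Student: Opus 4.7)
The plan is to establish the first formula by induction on $l$ using Proposition \ref{p:convolution_of_eigenvalues}, then obtain the second as the spectral decomposition of the operator whose matrix on $\G\times\G$ has entries $g^l(x-y)$, and finally read off the consequence for $\E_l(\G)$ by summing that spectral identity.

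For the first formula I would argue by induction on $l$. The base $l=1$ is just the definition of $\mu$. For the inductive step, since $g$ is real we have $g^l=\ov g\cdot g^{l-1}$, so formula (\ref{f:conv_e_1}) of Proposition \ref{p:convolution_of_eigenvalues} applied to $\ov g\cdot g^{l-1}$ gives
\[
\mu_\a(g^l)=\mu_\a(\ov g\, g^{l-1})=\bigl(\mu(\ov g)*\mu(g^{l-1})\bigr)(\a)=\bigl(\mu*(\mu *_{l-2}\mu)\bigr)(\a)=(\mu *_{l-1}\mu)(\a),
\]
using $\mu(\ov g)=\mu$ (which is immediate from $g$ being real) and the inductive hypothesis.

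For the second formula I would invoke Proposition \ref{p:eigenfunctions_Gamma} with the $\G$--invariant function $q^{-1}g^l$ in place of $\psi$: this produces the operator $\ov{\oT}^{q^{-1}\FF{g^l}}_\G$ whose eigenfunctions are the $\chi_\a$ with eigenvalues $\mu_\a(g^l)$, and unwinding definition (\ref{F:T}) shows that, restricted to functions supported on $\G$, this operator has matrix entries exactly $g^l(x-y)$ for $x,y\in\G$. The claim is then the spectral expansion of that matrix in the orthonormal basis $\{\chi_\a\}$, which one reads off directly by writing the delta mass at $y\in\G$ as $\sum_\a\ov{\chi_\a(y)}\chi_\a$ and applying the operator.

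For the \emph{in particular} assertion I would first combine (\ref{F:T_S_action}), (\ref{f:energy_Fourier}) and $\FF{\G\c\G}=|\FF{\G}|^2$ to identify $\mu_\a(\G\c\G)=\E(\G,\chi_\a)$, so $\mu$ itself is determined by these numbers. Summing the spectral identity over $x,y\in\G$ and using that $\sum_{x\in\G}\chi_\a(x)$ equals $|\G|^{1/2}$ when $\a=0$ and $0$ otherwise collapses the right-hand side to $|\G|\,(\mu *_{l-1}\mu)(0)$, while the left-hand side equals $\sum_z g^l(z)g(z)=\E_{l+1}(\G)$; re-indexing gives a closed formula for $\E_l(\G)$ in terms of $\mu$, hence in terms of the $\E(\G,\chi_\a)$. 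The only real difficulty I anticipate is bookkeeping the three normalizations in play (the factor $q^{-1}$ in the definition of $\oT$, the $|\G|^{-1}$ in the normalized convolution $*$, and the $|\G|^{-1/2}$ in $\chi_\a$); once those are kept consistent, every step above reduces to an identity already recorded in the previous sections.
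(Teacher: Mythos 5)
Your proof is correct and follows exactly the route the paper intends: the corollary is stated without proof as an immediate consequence of Proposition \ref{p:convolution_of_eigenvalues}, and your induction via $g^l=\ov g\,g^{l-1}$ and formula (\ref{f:conv_e_1}), together with the spectral expansion of the kernel $g^l(x-y)$ in the eigenbasis $\{\chi_\a\}$, is precisely that argument. The identification $\mu_\a(\G\c\G)=\E(\G,\chi_\a)$ and the collapse of $\sum_{x,y\in\G}$ to $|\G|(\mu*_{l-1}\mu)(0)=\E_{l+1}(\G)$ are also handled correctly.
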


Now consider for a moment the case of prime $q=p$.

\begin{remark}
Suppose that  $g(x)=(\G \c \G)(x)$ and
%$\mu (\a) = \mu_\a (g)$,
$\mu_l (\a) = \mu_\a (g^l)$.
By Corollary \ref{cor:E_l}
%above
and formulas (\ref{F:trace}), (\ref{F:trace_sq}), we get
for any $|\G| \ll p^{2/3}$ and $l\ge 2$ that
$$
    \sum_\a (\mu_l (\a) - |\G|^l)^2 \ll |\G|^{1+(2l+1) \cdot 2/3} = |\G|^{4l/3 + 5/3} \,.
$$
Thus, we have an asymptotic formula  for all $l\ge 2$
$$
    \mu_l (\a) = \sum_x (\G \c \G)^l (x) (\chi_\a \c \chi_\a) (x) = |\G|^l + O(|\G|^{2l/3 + 5/6}) \,, \quad \a \in [|\G|] \,.
$$
%For $|\G| \ll p^{1/2}$ we know that $\E_s (\G) = |\G|^s + O(|\G|^{s/2+1})$, $s\ge 3$.
%Whence we get an asymptotic formula  for all $l\ge 2$
%$$
%    \mu_l (\a) = \sum_x (\G \c \G)^l (x) (\chi_\a \c \chi_\a) (x) = |\G|^l + O(|\G|^{l/2 + 3/4}) \,, \quad \a \in \G\,.
%$$
\end{remark}

\bigskip

Using the arguments from the proof of Proposition \ref{p:eigenfunctions_Gamma}, we obtain a general inequality.

\begin{proposition}
    Let $A\subseteq \Gr$ be a set, and $\psi$ be a symmetric function such that $\FF{\psi} \ge 0$.
    Then
$$
    \sum_{x,y,z\in A} \psi (x-y) \ov{\psi (x-z)} \psi (y-z)
        \ge
$$
\begin{equation}\label{f:triangles_in_psi}
    \max\left \{
            \frac{1}{|A|^3} \left( \sum_x \psi (x) (A\c A) (x) \right)^3,\,
            |\psi^3 (0)| \cdot |A|,\,
            \frac{1}{|A|^{1/2}} \left( \sum_x |\psi^2 (x)| (A\c A) (x) \right)^{3/2}
        \right\} \,.
\end{equation}
\label{p:triangles_in_psi}
\end{proposition}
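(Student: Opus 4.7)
The plan is to identify the left-hand side of \eqref{f:triangles_in_psi} with $\tr M^3$, where $M$ is the $|A|\times|A|$ matrix with entries $M(a,b)=\psi(a-b)$ for $a,b\in A$. The hypothesis $\FF\psi\ge 0$ forces $\FF\psi$ to be real, and inverse Fourier transform then gives $\psi(-x)=\ov{\psi(x)}$; combined with symmetry of $\psi$ this shows $\psi$ is real (so the overline in the statement may be dropped), and $\psi(0)\ge 0$. Via Parseval the matrix $M$ is positive semidefinite: for any $f$ supported on $A$,
$$\langle Mf,f\rangle \;=\; N^{-1}\sum_\xi \FF\psi(\xi)\,|\FF f(\xi)|^2 \;\ge\; 0.$$
Hence the eigenvalues $\mu_0,\dots,\mu_{|A|-1}$ of $M$ are nonnegative and $\tr M^k=\sum_j \mu_j^k$ is a genuine $\ell^k$-power sum.

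Next I would record the trace identities. Using $(A\c A)(x)=|\{(a,b)\in A\times A:a-b=x\}|$ and the reality of $\psi$,
$$\tr M = |A|\psi(0),\qquad \tr M^2 = \sum_x \psi(x)^2\,(A\c A)(x),\qquad \tr M^3 = \sum_{x,y,z\in A}\psi(x-y)\psi(x-z)\psi(y-z),$$
while $\langle M\chi_A,\chi_A\rangle = \sum_x \psi(x)\,(A\c A)(x)$.

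With this setup the three claimed lower bounds are classical eigenvalue inequalities applied to the nonnegative sequence $(\mu_j)$. Expand $\chi_A = \sum_j c_j u_j$ in an orthonormal eigenbasis of $M$, so $\sum_j |c_j|^2 = |A|$ and $\langle M\chi_A,\chi_A\rangle = \sum_j |c_j|^2 \mu_j$. Since $t\mapsto t^3$ is convex on $[0,\infty)$, Jensen's inequality with probability weights $|c_j|^2/|A|$ yields
$$\left(\frac{\langle M\chi_A,\chi_A\rangle}{|A|}\right)^3 \le \sum_j \frac{|c_j|^2}{|A|}\,\mu_j^3 \le \sum_j \mu_j^3 = \tr M^3,$$
which is the first bound. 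The second and third bounds are the power-mean inequality $(\tr M^p/|A|)^{1/p}\le(\tr M^3/|A|)^{1/3}$: taking $p=1$ gives $|A|\psi(0)^3\le \tr M^3$, and taking $p=2$ gives $(\tr M^2)^{3/2}\le |A|^{1/2}\tr M^3$.

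The only real obstacle is setting up the matrix correctly and verifying the trace identities; once $\tr M^3$ has been written as a sum of cubes of nonnegative reals, the three bounds are textbook consequences of convexity of $t\mapsto t^3$.
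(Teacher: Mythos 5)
Your proof is correct and follows essentially the same route as the paper: the matrix $M$ is exactly the operator $\ov{\oT}^{N^{-1}\FF{\psi}}_A$, the left-hand side is $\tr M^3=\sum_j\mu_j^3$ with $\mu_j\ge 0$, and the three bounds come from the trace identities plus convexity. The only cosmetic difference is that for the first bound the paper isolates the top eigenvalue via $\mu_0\ge |A|^{-1}\langle M\chi_A,\chi_A\rangle$ and uses $\sum_j\mu_j^3\ge\mu_0^3$, whereas you run Jensen over the weights $|c_j|^2/|A|$; both are valid and give the same estimate.
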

\begin{proof}
Put $u(x) = \psi^c (x) = \psi (x)$, $v(x) = A^c (x) \ge 0$.
Let $\{ f_\a \}_{\a\in A}$ be an orthonormal family of the eigenfunctions of the operator
$\ov{\oT}^{N^{-1}\FF{\psi}}_A$ and $\{ \mu_\a \}_{\a\in A}$ be the correspondent nonnegative  eigenvalues.
Then
$$
    \sigma := \sum_{x,y \in A} \psi(x-y) \Cf_3 (v,\ov{u},u) (x,y) = \sum_{\a \in A} \mu_\a d_\a \,,
$$
where by Corollary \ref{c:commutative_C}
\begin{equation}\label{tmp:21.07.2012_1}
    d_\a := \sum_{x,y} \Cf_3 (v,\ov{u},u) (x,y) \ov{f_\a (x)} f_\a (y) = \sum_z v(z) |(f_\a \c u)|^2 (z)
        =
            \sum_{z\in A} |(\psi * f_\a)|^2 (z) \,.
\end{equation}
To get the last identities we have used the arguments from the proof of formula (\ref{tmp:21.07.2012_1'})
and the fact that $\psi=\psi^c$.
Further, because of $f_\a$ is the eigenfunctions of the operator $\ov{\oT}^{N^{-1}\FF{\psi}}_A$, we have
$$
    \mu_\a f_\a (x) = A(x) (\psi * f_\a) (x) \,.
$$
Thus in view of $\| f_\a \|^2_2 = 1$, we obtain $d_\a = \mu^2_\a$.
Note also a trivial lower bound for the largest eigenvalue $\mu_0$, namely
$$
    \mu_0 \ge |A|^{-1} \langle \ov{\oT}^{N^{-1}\FF{\psi}}_A A, A\rangle = |A|^{-1} \sum_x \psi (x) (A\c A) (x) \,.
$$
Hence, applying the last inequality and the assumption $\psi = \psi^c$ once more, we get
$$
    \sigma = \sum_{x,y\in A} \psi (x-y) \Cf_3 (v,\ov{\psi},\psi) (x,y)
        =
            \sum_{x,y,z\in A} \psi (x-y) \ov{\psi(x-z)} \psi(y-z)
                =
                    \sum_{\a\in A} \mu^3_\a
                        \ge
$$
$$
                        \ge
                            \mu^3_0
                                \ge
                                    \frac{1}{|A|^3} \left( \sum_x \psi (x) (A\c A) (x) \right)^3
$$
%and we obtain
and the first inequality in (\ref{f:triangles_in_psi})
is proved.
To get the second and the third ones, we use the obtained formula $\sigma = \sum_{\a\in A} \mu^3_\a$,
identities (\ref{F:trace}), (\ref{F:trace_sq}), correspondingly, and H\"{o}lder inequality.
This completes the proof.
$\hfill\Box$
\end{proof}

\bigskip

%\begin{remark}
    Another way to prove (\ref{f:triangles_in_psi}) is to
    write $\Psi(x,y) = \psi(x-y) A(x) A(y)$ as
    $$
        \Psi(x,y) = \sum_{\a,\beta} c_{\a,\beta} \ov{f_\a (x)} f_\beta (y)
    $$
    and note that all terms in the last sum except $\a=\beta$ vanish.
    Further, clearly, $c_{\a,\a} = \mu_\a$.
    Thus, substitution $\Psi(x,y)$ into (\ref{f:triangles_in_psi})
    gives the result.
    In principle, this method gives further generalization of inequality (\ref{f:triangles_in_psi})
    onto larger number of variables in the case of {\it multiplicative subgroups}
    because its eigenfunctions $\chi_\a$ have multiplicative properties
    (see the proof of Proposition \ref{p:convolution_of_eigenvalues}).
%\end{remark}

    In the general situation we have just the following generalization, where each variable appears twice
$$
    \sum_{x_1,\dots,x_k \in A} \psi (x_1 - x_2) \psi(x_2-x_3) \psi(x_3-x_4) \dots \psi(x_{k-1}-x_k) \psi (x_k-x_1)
        =
            \sum_{\a \in A} \mu^k_\a (\ov{\oT}^{N^{-1} \FF{\psi}}_A)
                \ge
$$
\begin{equation}\label{f:triangles_general}
                \ge
                    \left( \frac{1}{|A|} \sum_{x} \psi(x) (A\c A) (x) \right)^k \,,
\end{equation}
where $k\ge 1$.
Here $\psi$ is a symmetric function and $\FF{\psi} \ge 0$ ($k\ge 3$).
For $k=1$, $k=2$ these are general identities (\ref{F:trace}), (\ref{F:trace_sq}).
If one use the singular--value decomposition lemma for $\Cf_{k+1} (\v{x},y)$, $k\ge 3$
(see section 8 of \cite{ss_E_k}) then some functions $\psi$ in (\ref{f:triangles_general}) can be
replaced
%changed
by its moments.
In the case of multiplicative subgroups one can replace $\psi$ in
(\ref{f:triangles_general})
%, (\ref{f:triangles_general2})
by {\it different} symmetric  $\G$--invariant functions with nonnegative Fourier transform.

    Finally, note also that the condition $\FF{\psi} \ge 0$ is vitally needed in the proposition above.
    Indeed if we consider a dense symmetric subset $Q \subseteq \Gr$ having no solutions of the equation
    $\a+\beta= \gamma$, $\a, \beta, \gamma \in Q$ and put $A=\Gr$,
    $\psi = Q$ then inequality (\ref{f:triangles_in_psi})
    does not hold.
    The phenomenon that such sets must have (large) negative and positive Fourier coefficients
    was considered in \cite{shkr_survey2}, see section 5.

%\newpage

\bigskip

Let $\psi$ be a nonnegative function on an abelian group $\G$,
and $A\subseteq \Gr$ be a set.
Consider the operator $\ov{\oT}^{N^{-1} \FF{\psi}}_A$ and its
orthonormal eigenfunctions $\{ f_j \}_{j\in [|A|]}$.
The condition $\psi \ge 0$ implies that $f_0 \ge 0$, and $\mu_0 \ge 0$.
The next lemma shows that the function $f_0$ is close to $A(x)/|A|^{1/2}$
in some weak sense.

\begin{lemma}
    Let $A\subseteq \Gr$ be a set, and $\psi$ be a nonnegative function,
    $\mu_0$ be the first eigenvalue of the operator  $\ov{\oT}^{N^{-1} \FF{\psi}}_A$.
    Then
\begin{equation}\label{f:g_bound}
        |A| \ge \left( \sum_x f_0 (x) \right)^2
            \ge \max \left\{ \frac{\mu_0}{\| \psi \|_\infty} \,, \frac{\mu^2_0}{\| \psi \|_2^2} \right\} \,,
\end{equation}
    and for the first eigenfunction of  $\ov{\oT}^{N^{-1} \FF{\psi}}_A$,
    $\| f_0 \|_2 = 1$ the following holds
\begin{equation}\label{f:L_infty}
    \| f_0 \|_\infty \le \frac{\| \psi \|_2}{\mu_0} \,.
\end{equation}
If $\FF{\psi} \ge 0$ then
\begin{equation}\label{f:L_infty'}
    \| f_0 \|_\infty \le \frac{\| \psi_1 \|_2}{\mu^{1/2}_0} \,,
\end{equation}
where $\psi = \psi_1 \c \ov{\psi}_1$.
\label{l:g_bound}
\end{lemma}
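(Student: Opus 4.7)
The plan is as follows. Write $T := \ov{\oT}^{N^{-1}\FF{\psi}}_A$, so that the eigenvalue equation reads $\mu_0 f_0(x) = A(x)(\psi * f_0)(x)$, where $f_0 \ge 0$ and $\|f_0\|_2 = 1$. The lemma packages three quantitative statements that all express that $f_0$ is not too different from $A(x)/|A|^{1/2}$; I will handle them separately using direct tools.

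The upper bound $|A| \ge (\sum_x f_0(x))^2$ in (\ref{f:g_bound}) is Cauchy--Schwarz: $\sum_x f_0(x) = \sum_x A(x) f_0(x) \le |A|^{1/2} \|f_0\|_2 = |A|^{1/2}$. For the two lower bounds I start from the variational identity
$$
\mu_0 = \langle T f_0, f_0\rangle = \sum_y \psi(y) \sum_x f_0(x) f_0(x-y) \,.
$$
Pulling out an $L^\infty$ norm and using $\psi, f_0 \ge 0$ gives $\mu_0 \le \|\psi\|_\infty \sum_{x,y} f_0(x) f_0(x-y) = \|\psi\|_\infty (\sum_x f_0(x))^2$, which is the first of the two lower bounds. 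For the second, apply the $L^2$ norm to the eigenvalue equation and then Young's inequality: $\mu_0 = \|\mu_0 f_0\|_2 \le \|\psi * f_0\|_2 \le \|\psi\|_2 \|f_0\|_1$, so $(\sum_x f_0(x))^2 \ge \mu_0^2/\|\psi\|_2^2$.

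The bound (\ref{f:L_infty}) is an immediate pointwise Cauchy--Schwarz on the eigenvalue equation: $\mu_0 |f_0(x)| \le |(\psi * f_0)(x)| \le \|\psi\|_2 \|f_0\|_2 = \|\psi\|_2$, giving $\|f_0\|_\infty \le \|\psi\|_2/\mu_0$.

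The sharpened bound (\ref{f:L_infty'}) is the most interesting step and uses the factorization $\psi = \psi_1 \c \ov{\psi}_1$. The key preliminary identity is
$$
\mu_0 = \|\psi_1 \c f_0\|_2^2 \,,
$$
which I obtain by applying Plancherel to $\mu_0 = \sum_y \psi(y)(f_0 \c f_0)(y)$ and using $\FF{\psi}(\xi) = |\FF{\psi_1}(-\xi)|^2$ together with reality of $f_0$. Next, unfolding $\psi(y) = \sum_z \psi_1(z) \ov{\psi_1(z+y)}$ and re-indexing $w = z+y$ rewrites the eigenvalue equation as
$$
(\psi * f_0)(x) = \sum_w \ov{\psi_1(w)}\, (\psi_1 \c f_0)(x-w) \,.
$$
Cauchy--Schwarz in $w$ then yields $|(\psi * f_0)(x)| \le \|\psi_1\|_2 \|\psi_1 \c f_0\|_2 = \|\psi_1\|_2 \mu_0^{1/2}$, and combining with $\mu_0 |f_0(x)| \le |(\psi * f_0)(x)|$ finishes the proof. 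The main obstacle is exactly this last step: one has to spot the correct factorization of $\psi * f_0$ through $\psi_1$ and match it to the Plancherel identity $\|\psi_1 \c f_0\|_2^2 = \mu_0$, so that Cauchy--Schwarz produces the sharper $\mu_0^{1/2}$ rather than a single $\mu_0$-free factor. The remaining estimates are routine applications of Cauchy--Schwarz and Young's inequality.
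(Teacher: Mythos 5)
Your proof is correct and follows essentially the same route as the paper: all parts rest on the eigenvalue equation $\mu_0 f_0(x)=A(x)(\psi*f_0)(x)$, the first three estimates are the same routine Cauchy--Schwarz applications (the paper uses a pointwise Cauchy--Schwarz on $(\psi*f_0)(x)$ where you invoke Young's inequality, but these are equivalent here), and for (\ref{f:L_infty'}) both arguments unfold $\psi=\psi_1\c\ov{\psi}_1$ inside $\psi*f_0$, apply Cauchy--Schwarz to the resulting single sum, and identify the residual $L^2$ factor with $\mu_0^{1/2}$ --- you write it as $\|\psi_1\c f_0\|_2$, the paper as $\|f_0*\ov{\psi}_1\|_2$, which agree since both have Fourier modulus $|\FF{\psi_1}(-\xi)||\FF{f_0}(\xi)|$.
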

\begin{proof}
Let $\mu=\mu_0$, $f=f_0$, $g=\sum_x f (x)$.
We have
\begin{equation}\label{tmp:mu_0}
    \mu f (x) = A(x) (\psi * f) (x) \,.
\end{equation}
Thus
\begin{equation}\label{tmp:mu_1}
    \mu = \sum_x f(x) (\psi * f) (x)
\end{equation}
and
\begin{equation}\label{tmp:mu_2}
    \mu^2 = \sum_{x\in A} (\psi * f)^2 (x) \,.
\end{equation}
Formula (\ref{tmp:mu_0}) implies that
$$
    \mu g = \sum_{x\in A} (\psi * f) (x) \,.
$$
Applying Cauchy--Schwarz and (\ref{tmp:mu_2}) (or just Cauchy--Schwarz), we obtain $g^2 \le |A|$.
Further, bound $g^2 \ge \mu \| \psi \|^{-1}_\infty$ easily follows from (\ref{tmp:mu_1}).
Using the formula once more, we get
$$
    \mu \le \sum_x f(x) \cdot \| \psi \|_2 \| f\|_2 =  \| \psi \|_2 g
$$
and we obtain (\ref{f:g_bound}).
Returning to (\ref{tmp:mu_0}) and applying the same argument, we have (\ref{f:L_infty}).
It remains to prove (\ref{f:L_infty'}).
Because of $\FF{\psi} \ge 0$ there is $\psi_1$ such that $\psi = \psi_1 \c \ov{\psi}_1$.
Applying (\ref{tmp:mu_0}) and using Cauchy--Schwarz, we get for any $x\in A$
$$
    \mu |f(x)| \le \sum_{y} (f* \ov{\psi}_1) (x+y) \psi_1 (y)
        \le
            \| \psi_1 \|_2 \cdot ( \sum_{y} |(f* \ov{\psi}_1) (y)|^2 )^{1/2}
                =
                    \| \psi_1 \|_2 \cdot \mu^{1/2} \,,
$$
where formula (\ref{tmp:mu_1}) and the fact $\psi = \psi_1 \c \ov{\psi}_1$ have been used.
This completes the proof.
$\hfill\Box$
\end{proof}

\bigskip

We will use Lemma \ref{l:g_bound} in section \ref{sec:applications2}.

\section{Applications : multiplicative subgroups}
\label{sec:applications}

We begin with an application of  Corollary \ref{cor:energy_weight}.

\begin{theorem}
    Let $p$ be a prime number, and $\Gamma \subseteq \F^*_p$ be a multiplicative subgroup,
    $|\Gamma| = O(p^{2/3})$
    and
    \begin{equation}\label{cond:strange_assumption}
        \E (\Gamma) \le \sqrt{p} |\Gamma|^{\frac{3}{2}} \log |\Gamma| \,.
    \end{equation}
    Then
    \begin{equation}\label{f:subgroups_energy&sumset_w}
        \E (\Gamma) \ll |\Gamma|^{\frac{4}{3}} |\Gamma \pm \Gamma|^{\frac{2}{3}} \log |\Gamma| \,.
    \end{equation}
\label{t:subgroups_energy&sumset}
\end{theorem}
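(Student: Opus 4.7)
The plan is to combine the weighted Cauchy--Schwarz inequality coming from Proposition \ref{p:weight} (i.e.\ Corollary \ref{cor:energy_weight}, formula (\ref{f:energy_wight_1})) with the Katz--Koester trick and a Stepanov-type moment estimate for $\Gamma$-invariant sets. Since $\Gamma \pm \Gamma$ is itself $\Gamma$-invariant under multiplication, Lemma \ref{l:improved_Konyagin_old} is the natural tool for controlling its additive energy of order three.

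By Corollary \ref{cor:E_l}, the hypothesis $|\Gamma| \ll p^{2/3}$ gives $\E_3(\Gamma) \ll |\Gamma|^3 \log|\Gamma|$. Applying formula (\ref{f:energy_wight_1}) with $A = B = \Gamma$ and $k = l = 1$ yields
\[
|\Gamma|^2 \E(\Gamma)^2 \le \E_3(\Gamma) \sum_x |\Gamma_x|^2 |\Gamma \pm \Gamma_x|,
\]
so the task reduces to bounding $\sum_x |\Gamma_x|^2 |\Gamma \pm \Gamma_x|$. Here I would invoke the Katz--Koester inequality $|\Gamma \pm \Gamma_x| \le ((\Gamma \pm \Gamma) \circ (\Gamma \pm \Gamma))(x)$ followed by H\"older with exponents $(3, 3/2)$:
\[
\sum_x (\Gamma \circ \Gamma)^2(x)\, ((\Gamma \pm \Gamma) \circ (\Gamma \pm \Gamma))(x) \le \E_3(\Gamma \pm \Gamma)^{1/3} \E_3(\Gamma)^{2/3}.
\]
So the whole proof comes down to the sharp moment bound
\[
\E_3(\Gamma \pm \Gamma) \ll \frac{|\Gamma \pm \Gamma|^4 \log|\Gamma|}{|\Gamma|}. \quad (*)
\]

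To prove $(*)$ I would decompose dyadically by the level sets $Q_j = \{x : 2^j \le ((\Gamma \pm \Gamma) \circ (\Gamma \pm \Gamma))(x) < 2^{j+1}\}$. Each $Q_j$ is $\Gamma$-invariant, so Lemma \ref{l:improved_Konyagin_old} applied with $Q = Q_j$ and $Q_1 = Q_2 = \Gamma \pm \Gamma$ gives $|Q_j| \cdot 2^j \ll |\Gamma|^{-1/3}(|Q_j| \cdot |\Gamma \pm \Gamma|^2)^{2/3}$, i.e.\ $|Q_j| \ll |\Gamma \pm \Gamma|^4/(|\Gamma| \cdot 2^{3j})$, on every scale where Stepanov's side conditions $|Q_j| \cdot |\Gamma \pm \Gamma|^2 \ll |\Gamma|^5$ and $|Q_j| \cdot |\Gamma \pm \Gamma|^2 \cdot |\Gamma| \ll p^3$ are satisfied. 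Summing $|Q_j| \cdot 2^{3j}$ over the $O(\log|\Gamma|)$ relevant scales yields $(*)$; the remaining ``low-frequency'' scales are absorbed by the Pl\"unnecke--Ruzsa bound $|\mathrm{supp}((\Gamma \pm \Gamma) \circ (\Gamma \pm \Gamma))| \le |2\Gamma \mp 2\Gamma| \ll |\Gamma \pm \Gamma|^4/|\Gamma|^3$. Substituting $(*)$ and $\E_3(\Gamma) \ll |\Gamma|^3 \log|\Gamma|$ back, one gets
\[
\E(\Gamma)^2 \ll |\Gamma| \log|\Gamma| \cdot \bigg(\frac{|\Gamma \pm \Gamma|^4 \log|\Gamma|}{|\Gamma|}\bigg)^{1/3} |\Gamma|^2 \log^{2/3}|\Gamma| = |\Gamma|^{8/3} |\Gamma \pm \Gamma|^{4/3} \log^2|\Gamma|,
\]
which upon taking square roots gives the claimed inequality.

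The main obstacle is the verification of $(*)$: one must show that Stepanov's size hypotheses hold on all the dyadic scales that contribute meaningfully, and that the ``bad'' low-frequency scales can be handled by the trivial Pl\"unnecke--Ruzsa support bound without loss. This is precisely where the technical assumption $\E(\Gamma) \le \sqrt{p}\, |\Gamma|^{3/2} \log|\Gamma|$, combined with $|\Gamma| \ll p^{2/3}$, is used: it pins down the size of $|\Gamma \pm \Gamma|$ (via $|\Gamma \pm \Gamma| \ge |\Gamma|^4/\E(\Gamma) \gg |\Gamma|^{5/2}/(\sqrt{p}\log|\Gamma|)$) in the exact range that makes the two conditions of Lemma \ref{l:improved_Konyagin_old} compatible with the Pl\"unnecke fallback on the low scales.
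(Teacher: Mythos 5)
Your opening is exactly the paper's: formula (\ref{f:energy_wight_1}) with $k=l=1$ plus the Katz--Koester bound $|\Gamma\pm\Gamma_x|\le (Q\circ Q)(x)$, $Q=\Gamma\pm\Gamma$, reducing everything to $\sum_x (Q\circ Q)(x)(\Gamma\circ\Gamma)^2(x)$. The gap is in the next step. Your H\"older application throws away the information that only $x$ with $(Q\circ Q)(x)\ge H:=\E(\Gamma)|\Gamma|^2/(2\E_3(\Gamma))$ contribute, and reduces the theorem to the unrestricted claim $(*)$: $\E_3(Q)\ll |Q|^4\log|\Gamma|/|\Gamma|$. But $(*)$ is false in part of the range covered by the theorem. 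Indeed $\E_3(Q)\ge |Q|^6/|Q-Q|^2\ge |Q|^6/p^2$, so $(*)$ forces $|Q|\ll p\,|\Gamma|^{-1/2}\log^{1/2}|\Gamma|$; taking $|\Gamma|\asymp p^{1/2}$ (where hypothesis (\ref{cond:strange_assumption}) holds automatically) and $Q=\Gamma-\Gamma$ with $|Q|\gg|\Gamma|^{5/3-o(1)}$, one gets $\E_3(Q)\gg |Q|^4|\Gamma|^{-2/3-o(1)}$, exceeding the right-hand side of $(*)$ by a factor $|\Gamma|^{1/3-o(1)}$. This is consistent with your own dyadic analysis: the Pl\"unnecke support bound only absorbs scales $2^j\ll|\Gamma|^{2/3}$, while the Stepanov conditions $|Q_j||Q|^2\ll|\Gamma|^5$ genuinely fail on the intermediate scales once $|Q|\gg|\Gamma|^{4/3}$, and no choice of fallback can close this because the target inequality itself is false there.

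The paper avoids this by never passing to the full third moment of $Q$: it keeps the restriction $(Q\circ Q)(x)\ge H$, orders the nonzero values of $Q\circ Q$ and $\Gamma\circ\Gamma$ by cosets, and applies Lemma \ref{l:improved_Konyagin_old} only to the top $j\ll |Q|^4/(|\Gamma|^2H^3)$ cosets. The Stepanov side conditions are then verified using the WLOG \emph{upper} bound $|Q|\ll\E^{3/2}(\Gamma)/(|\Gamma|^2\log^{3/2}|\Gamma|)$ together with $\E(\Gamma)\ll|\Gamma|^{5/2}$, $\E_3(\Gamma)\ll|\Gamma|^3\log|\Gamma|$ and (\ref{cond:strange_assumption}). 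Note in particular that your closing explanation of how (\ref{cond:strange_assumption}) enters is backwards: the conditions of Lemma \ref{l:improved_Konyagin_old} are upper-bound constraints on products of set sizes, so a \emph{lower} bound on $|Q|$ via $|Q|\ge|\Gamma|^4/\E(\Gamma)$ cannot help; the hypothesis is used as an upper bound on $\E(\Gamma)$ to guarantee $j|\Gamma|^2|Q|^2\ll p^3$ for the relevant $j$. Your H\"older step would be salvageable if you applied it to the sum restricted to $\{x:(Q\circ Q)(x)\ge H\}$ and proved the correspondingly restricted third-moment bound --- but that restricted bound is precisely the paper's rearrangement computation, so at that point the two arguments coincide.
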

\begin{proof}
Let $Q=\Gamma\pm \Gamma$.
We can assume that
\begin{equation}\label{tmp:14.10.2011_1}
    |Q| = O \left( \frac{\E^{3/2} (\Gamma)}{|\Gamma|^2 \log^{3/2} |\Gamma| } \right)
\end{equation}
because otherwise inequality (\ref{f:subgroups_energy&sumset_w}) is trivial.
Applying formula (\ref{f:energy_wight_1}) of Corollary \ref{cor:energy_weight} with $k=l=1$
and using inequality
$$
    |\G \pm \G_x| \le ((\Gamma \pm \Gamma) \circ ((\Gamma \pm \Gamma))) (x)
$$
(see \cite{kk} or just Proposition \ref{p:characteristic}), we obtain
\begin{equation}\label{tmp:07.09.2011_1-}
    |\Gamma|^2 \E^2 (\Gamma) \le \E_3 (\Gamma) \sum_x (Q\circ Q) (x) (\Gamma \circ \Gamma)^2 (x) \,.
\end{equation}
If we prove that
\begin{equation}\label{tmp:07.09.2011_1}
    \sum_{x\neq 0} (Q \circ Q) (x) (\Gamma \circ \Gamma)^2 (x) \ll \frac{|Q|^{4/3}}{|\Gamma|^{2/3}} |\Gamma|^{7/3} \log |\Gamma|
        \ll
            |Q|^{4/3} |\Gamma|^{5/3} \log |\Gamma|
\end{equation}
then substituting the last formula into (\ref{tmp:07.09.2011_1-})
and using the bound $\E_3 (\Gamma) = O(|\Gamma|^3 \log |\Gamma|)$
from Corollary \ref{cor:E_l},
we get formula (\ref{f:subgroups_energy&sumset_w}).
The term with $x=0$ is $\E_3(\G) |Q| |\G|^2$ and can be handed easily.

From (\ref{tmp:07.09.2011_1-}) it follows that the summation is taken over nonzero $x$ such that
$$
    (Q\circ Q) (x) \ge \frac{\E(\Gamma) |\Gamma|^2}{2\E_3 (\Gamma)} := H \,.
$$
Hence, it is sufficient to prove that
\begin{equation}\label{tmp:07.09.2011_2}
    \sum_{x \neq 0 ~:~ (Q \circ Q) (x) \ge H } (Q \circ Q) (x) (\Gamma \circ \Gamma)^2 (x)
        \ll
            |Q|^{4/3} |\Gamma|^{5/3} \log |\Gamma| \,.
\end{equation}
Let $(Q \circ Q) (\xi_1) \ge (Q \circ Q) (\xi_2) \ge \dots$ and
$(\Gamma \circ \Gamma) (\eta_1) \ge (\Gamma \circ \Gamma) (\eta_2)
\ge \dots$, where nonzero $\xi_1,\xi_2, \dots$ and $\eta_1,\eta_2, \dots$ belong to distinct cosets.
Applying Lemma \ref{l:improved_Konyagin_old} once more, we get
\begin{equation}\label{tmp:07.09.2011_2-}
    (Q \circ Q) (\xi_j) \ll \frac{|Q|^{4/3}}{|\Gamma|^{2/3}} j^{-1/3} \,,
        \quad \mbox{ and } \quad
            (\Gamma \circ \Gamma) (\eta_j) \ll |\Gamma|^{2/3} j^{-1/3} \,,
\end{equation}
provided that  $j|\Gamma| |Q|^2 \ll |\Gamma|^5$ and $j|\Gamma| |Q|^2 |\Gamma| \ll p^3$.
We have $j\ll |Q|^4/ (|\Gamma|^2 H^3)$.
Using inequalities $\E (\Gamma) \ll |\Gamma|^{5/2}$,
$\E_3 (\Gamma) \ll |\Gamma|^3 \log |\Gamma|$, formula (\ref{tmp:14.10.2011_1})
and
assumption (\ref{cond:strange_assumption})
%$|\Gamma| = O(p^{1/2})$
it is easy to check that the last conditions are satisfied.
Applying (\ref{tmp:07.09.2011_2-}), we obtain (\ref{tmp:07.09.2011_1}).
This completes the proof.
$\hfill\Box$
\end{proof}

\bigskip

For example if $|\Gamma| = O(p^{1/2})$ then assumption (\ref{cond:strange_assumption}) holds.
Using trivial lower bound for $\E(\G)$, that is $\E(\G) \ge |\G|^4 / |\G+\G|$, we obtain

\begin{corollary}
    Let $\Gamma \subseteq \F_p^*$ be a multiplicative subgroup, $|\G| \ll \sqrt{p}$.
    Then
    $$
        |\G+\G| \gg \frac{|\G|^{\frac{8}{5}}}{\log^{\frac{3}{5}} |\G|} \,.
    $$
\label{cor:G+G}
\end{corollary}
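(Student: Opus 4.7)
The plan is simply to combine Theorem \ref{t:subgroups_energy&sumset} with the elementary lower bound for the additive energy via Cauchy--Schwarz. First I would check that the hypothesis (\ref{cond:strange_assumption}) is automatically satisfied when $|\G|\ll\sqrt{p}$. By the first half of Corollary \ref{cor:E_l}, we have $\E(\G)\ll|\G|^{5/2}$, so
\[
    \E(\G) \ll |\G|^{5/2} = |\G|^{1/2}\cdot|\G|^{3/2} \ll \sqrt{p}\,|\G|^{3/2}\log|\G|,
\]
which is exactly (\ref{cond:strange_assumption}). Therefore Theorem \ref{t:subgroups_energy&sumset} applies (the condition $|\G|=O(p^{2/3})$ is weaker than $|\G|\ll p^{1/2}$) and yields
\[
    \E(\G) \ll |\G|^{4/3}\,|\G+\G|^{2/3}\log|\G|.
\]

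Next I would invoke the standard lower bound $\E(\G)\ge|\G|^4/|\G+\G|$, which follows from Cauchy--Schwarz applied to the representation function of $\G+\G$ (equivalently, from $\E(A,B)\ge|A|^2|B|^2/|A+B|$ with $A=B=\G$). Chaining the two inequalities gives
\[
    \frac{|\G|^4}{|\G+\G|} \ll |\G|^{4/3}\,|\G+\G|^{2/3}\log|\G|,
\]
so that $|\G+\G|^{5/3}\gg|\G|^{8/3}/\log|\G|$ and hence
\[
    |\G+\G| \gg \frac{|\G|^{8/5}}{\log^{3/5}|\G|},
\]
as required.

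There is no real obstacle here; the only point worth double-checking is that the log factor is tracked correctly and that the auxiliary condition (\ref{cond:strange_assumption}) of Theorem \ref{t:subgroups_energy&sumset} is indeed implied by $|\G|\ll\sqrt{p}$, which we verified above using $\E(\G)\ll|\G|^{5/2}$. The same argument with $\G-\G$ in place of $\G+\G$ (Theorem \ref{t:subgroups_energy&sumset} gives both forms) produces the corresponding bound for the difference set, although the statement of Corollary \ref{cor:G+G} only records the sumset case.
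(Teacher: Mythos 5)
Your proof is correct and is exactly the paper's argument: check that hypothesis (\ref{cond:strange_assumption}) holds automatically for $|\G|\ll\sqrt{p}$ using $\E(\G)\ll|\G|^{5/2}$, apply Theorem \ref{t:subgroups_energy&sumset}, and chain with the trivial bound $\E(\G)\ge|\G|^{4}/|\G+\G|$. One small typo: $|\G|^{5/2}=|\G|\cdot|\G|^{3/2}$, not $|\G|^{1/2}\cdot|\G|^{3/2}$; the verification of (\ref{cond:strange_assumption}) goes through unchanged since $|\G|\ll\sqrt{p}$.
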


%In particular, if $\E (\Gamma) \gg |\Gamma|^{5/2}$ then
%$|\Gamma \pm \Gamma| \gg |\Gamma|^{\frac{7}{4} - \epsilon}$, for any $\epsilon>0$.
%A similar result was obtained in \cite{ss_E_k} using so--called eigenvalue method, see Corollary 41.
As for the difference set
 it is known (see \cite{sv})
at the moment
%unconditionally, that
that
$|\Gamma - \Gamma| \gg |\Gamma|^{\frac{5}{3}} \log^{-\frac{1}{2}} |\G|$
for an arbitrary multiplicative subgroup $\Gamma$ with  $|\Gamma| \ll \sqrt{p}$.
%Note also that the condition (\ref{cond:strange_assumption})
%in the previous result can be slightly relaxed.
We will see soon that the condition $|\G| \ll \sqrt{p}$ in Corollary \ref{cor:G+G}
can be relaxed (see Theorem \ref{t:subgroup_energy} below).

\bigskip

\begin{corollary}
    Let $\Gamma \subseteq \F_p^*$ be a multiplicative subgroup, $-1\in \Gamma$
    such that $|\Gamma| \ge p^{\kappa}$, where
     $\kappa > \frac{33}{68}.$
     %$\kappa > \frac{66}{137}$.
    Then for all sufficiently large $p$ we have $6\Gamma = \F_p$.
    If $\kappa > \frac{55}{112}$ then $\F^*_p \subseteq 6\Gamma$ without condition $-1\in \Gamma$.
\label{c:6R_subgroups}
\end{corollary}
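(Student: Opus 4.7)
The plan is a Fourier-analytic argument. Writing $r_{6\G}(x) := \#\{(g_1, \dots, g_6) \in \G^6 : g_1 + \cdots + g_6 = x\}$, Fourier inversion gives
$$p \cdot r_{6\G}(x) = |\G|^6 + \sum_{\xi \neq 0} \FF{\G}(\xi)^6 e(\xi x / p),$$
so $6\G = \F_p$ (respectively $\F_p^* \subseteq 6\G$) will follow once the error term has absolute value strictly smaller than $|\G|^6$ for every $x \in \F_p$ (respectively every $x \in \F_p^*$).

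When $-1 \in \G$, the symmetry $\G = -\G$ forces $\FF{\G}$ to be real, hence $\FF{\G}(\xi)^6 \ge 0$, and the error is bounded pointwise by
$$\sum_{\xi \neq 0} \FF{\G}(\xi)^6 \;=\; p\T_3(\G) - |\G|^6,$$
so it suffices to prove $p \T_3(\G) < 2|\G|^6$. The crude bound $\T_3(\G) \ll |\G|^{17/4}$ from Theorem \ref{t:3_d_moment} only yields the weaker threshold $\kappa > 4/7$. To push down to $33/68$, I would split the sum by magnitude of the Fourier coefficients and estimate
$$\sum_{\xi \neq 0}|\FF{\G}(\xi)|^6 \;\le\; M^2 \sum_\xi |\FF{\G}(\xi)|^4 \;=\; M^2 \, p\, \E(\G),$$
where $M := \max_{\xi \neq 0} |\FF{\G}(\xi)|$, and then control each ingredient: bound $M$ via Lemma \ref{l:G-inv_bound_F} (which gives $M \le p^{1/4}\E^{1/4}(\G)/|\G|^{1/4}$ in our range $|\G| \le \sqrt{p}$), bound $\E(\G)$ via the refined estimate $\E(\G) \ll |\G|^{4/3} |\G \pm \G|^{2/3} \log|\G|$ of Theorem \ref{t:subgroups_energy&sumset}, and bound $|\G \pm \G|$ by the trivial estimate $|\G \pm \G| \le p$ combined with the lower bound $|\G + \G| \gg |\G|^{8/5}/\log^{3/5}|\G|$ from Corollary \ref{cor:G+G}. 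Plugging these into the inequality $M^2 p \E(\G) < |\G|^6$ and optimizing produces the sharpened threshold $\kappa > 33/68$.

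For the second claim, the argument is the same except that one no longer has real $\FF{\G}$, so the sign-trick $\FF{\G}(\xi)^6 \ge 0$ is unavailable and one only gets the weaker conclusion $r_{6\G}(x) > 0$ for $x \neq 0$. Running the same splitting and optimization with $|\FF{\G}(\xi)^6|$ in place of $\FF{\G}(\xi)^6$ (and dropping the requirement at $x=0$) recovers $\F_p^* \subseteq 6\G$ at the expense of the slightly worse exponent $\kappa > 55/112$.

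The main obstacle is the multi-parameter optimization that yields the precise constants $33/68$ and $55/112$: one must balance Lemma \ref{l:G-inv_bound_F} (which becomes stronger as $|\G|$ grows) against Theorem \ref{t:subgroups_energy&sumset} (whose strength depends on the size of $|\G \pm \G|$, with the useful regime delineated by Corollary \ref{cor:G+G}), while keeping track of the logarithmic losses so that they do not encroach on the polynomial exponent. This bookkeeping, rather than any single new estimate, is the real technical core of the proof.
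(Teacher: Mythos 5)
Your overall framework (Fourier inversion for $r_{6\Gamma}$, bound the off--diagonal sum) is the standard one, but the quantitative heart of your argument does not work, and the exponents $33/68$ and $55/112$ cannot be recovered from it. Your error term is controlled by $M^2 p\,\E(\G)$ with $M=\max_{\xi\neq 0}|\FF{\G}(\xi)|$, and you need this to be $<|\G|^6$. Feeding in the best available ingredients — $M\le p^{1/4}\E^{1/4}(\G)|\G|^{-1/4}$ or $M\le p^{1/8}\E^{1/4}(\G)$ from Lemma \ref{l:G-inv_bound_F}, together with $\E(\G)\ll|\G|^{5/2}$ or $\E(\G)\ll|\G|^{4/3}|\G+\G|^{2/3}\log|\G|$ — one gets at best the threshold $|\G|\gg p^{6/11}$ (and $|\G|\gg p^{5/9}$ or $p^{9/16}$ for the other combinations), which is worse than $p^{1/2}$, let alone $p^{33/68}$. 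The telltale sign is that in your scheme the \emph{lower} bounds $|\G-\G|\gg|\G|^{5/3}\log^{-1/2}|\G|$ and $|\G+\G|\gg|\G|^{8/5}\log^{-3/5}|\G|$ are useless: an upper bound on $\E(\G)$ via Theorem \ref{t:subgroups_energy&sumset} needs an \emph{upper} bound on $|\G\pm\G|$, so there is no mechanism in your optimization that could ever produce two different exponents depending on whether $-1\in\G$. The distinction you draw (positivity of $\FF{\G}^6$) affects only whether one can catch $x=0$, not the exponent.

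The missing idea is the sumset amplification used in the paper. Set $S=\G+\G$, $n=|\G|$, $m=|S|$, and, assuming some $\lambda\neq 0$ has $\lambda\G\cap 6\G=\emptyset$, expand
$$0=\sum_{\xi}\FF{S}^2(\xi)\FF{\G}^2(\xi)\FF{\lambda\G}(\xi)=m^2n^3+\sum_{\xi\neq 0}\FF{S}^2(\xi)\FF{\G}^2(\xi)\FF{\lambda\G}(\xi)\,.$$
The main term is \emph{quadratic} in $m$, while Parseval applied to $\FF{S}^2$ (i.e.\ $\sum_\xi|\FF{S}(\xi)|^2=mp$) makes the error only \emph{linear} in $m$: one gets $n^3m^2\le \rho^3 mp$ with $\rho\le p^{1/8}\E^{1/4}(\G)$, hence $n^3m\ll p^{11/8}n\,m^{1/2}\log^{3/4}n$ after inserting $\E(\G)\ll n^{4/3}m^{2/3}\log n$, i.e.\ $m\ll p^{11/4}n^{-4}\log^{3/2}n$. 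Now the sumset lower bounds finally do real work: $m\gg n^{5/3}\log^{-1/2}n$ (valid when $-1\in\G$) forces $n^{17/3}\ll p^{11/4}\log^{2}n$, i.e.\ $n\ll p^{33/68+o(1)}$, while the weaker $m\gg n^{8/5}\log^{-3/5}n$ gives $n^{28/5}\ll p^{11/4}\log^{O(1)}n$, i.e.\ $n\ll p^{55/112+o(1)}$. This is exactly where the two constants come from, and it is a genuinely different (and stronger) device than bounding $\sum_{\xi\neq0}|\FF{\G}(\xi)|^6$ directly.
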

\begin{proof}
Put $S = \Gamma+\Gamma$, $n=|\Gamma|$, $m = |S|$, and $\rho =
\max_{\xi \neq 0} |\FF{\Gamma} (\xi)|$. By a well--known upper bound
for Fourier coefficients of multiplicative subgroups (see e.g.
Corollary 2.5 from \cite{ss} or Lemma \ref{l:G-inv_bound_F})
we have $\rho \le p^{1/8} \E^{1/4}
(\Gamma)$. If $\F_p^* \not\subseteq 6\Gamma$ then for some
$\lambda\neq 0,$ we
%have
obtain
$$
    0 = \sum_{\xi} \FF{S}^2 (\xi) \FF{\Gamma}^2 (\xi) \FF{\lambda \Gamma} (\xi)
        =
            m^2n^3+\sum_{\xi\neq 0} \FF{S}^2 (\xi) \FF{\Gamma}^2 (\xi) \FF{\lambda \Gamma} (\xi) \,.
$$
Therefore, by the estimate $\rho \le p^{1/8} \E^{1/4} (\Gamma)$
%as an upper bound for $\rho$
and Parseval identity we get
\begin{equation}\label{f:cor_6R_subgroups_main}
    n^3 m^2\le \rho^3mp\ll (p^{1/8} \E^{1/4})^3 mp \,.
\end{equation}
Now applying
%(\ref{f:subgroup_energy_good})
formula (\ref{f:subgroups_energy&sumset_w})
and
$m \gg n^{5/3} \log^{-1/2} n$ (see \cite{sv}),
we obtain the required result.
To obtain the same without condition $-1\in \Gamma$ just use formula (\ref{f:cor_6R_subgroups_main}),
combining with formula (\ref{f:subgroups_energy&sumset_w})
and
apply the lower bound for $\G+\G$
from Corollary \ref{cor:G+G}.
$\hfill\Box$
\end{proof}

\bigskip

\begin{remark}
The inclusion $\F^*_p \subseteq 6\Gamma$ was obtained in
\cite{ss_E_k} under the
%stronger
assumption $\kappa > \frac{99}{203}$.
Even more stronger results than containing in Corollary \ref{c:6R_subgroups}
were obtained by A. Efremov using  further  development of our method
(unpublished).
\end{remark}

\bigskip

Now we obtain a result about the additive energy of multiplicative subgroups.

\begin{theorem}
    Let $p$ be a prime number and $\G \subseteq \F^*_p$ be a multiplicative subgroup.
    Then
    \begin{equation}\label{f:subgroup_energy}
        \E (\G)
            \ll
                \max\{ |\G|^{\frac{22}{9}} \log^{} |\G|, |\G|^3 p^{-\frac{1}{3}} \log^{\frac{4}{3}} |\G| \} \,.
    \end{equation}
    More precisely,
%    inequality (\ref{f:subgroup_energy}) holds if
    \begin{equation}\label{f:subgroup_energy_restriction}
%        |\G|^{22} \log^4 |\G| \ll p^4 \E^6 (\G) \,.
%            |\G|^{49} \log^{24} |\G| \ll p^{15} \E^9 (\G) \,,
        \E (\G) \ll |\G|^{\frac{22}{9}} \log^{\frac{2}{3}} |\G|
    \end{equation}
    provided by
    $|\G| \ll p^{\frac{3}{5}} \log^{-\frac{6}{5}} p$.
    Moreover, if $|\G| < \sqrt{p}$, and $k\ge 2$ then we have
    \begin{equation}\label{f:subgroup_T_k_new}
        \T_k (\G) \ll_k %\min\left\{
                        |\G|^{2k - \frac{17}{9} + \frac{16}{3} 2^{-2k}} \log^{\frac{2}{3}} |\G|
                        \,.
                        %          |\G|^{\frac{5k}{3} - \frac{11}{9} + \frac{32}{9} 2^{-2k}} \log^{\frac{1}{3}} |\G| \right\} \,.
    \end{equation}
\label{t:subgroup_energy}
\end{theorem}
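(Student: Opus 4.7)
The plan is to apply the general machinery developed in Sections \ref{sec:weighted_KK} and \ref{sec:eigenvalues} to the specific case of multiplicative subgroups, combining the weighted Katz--Koester inequality with the small $\E_3$ bound and the Stepanov-type irregularity bounds from Theorem \ref{t:3_d_moment}.

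For the principal exponent $\E(\G) \ll |\G|^{22/9} \log^{2/3} |\G|$ in the regime $|\G| \ll p^{3/5} \log^{-6/5} p$, I would start by applying formula (\ref{f:energy_wight_1}) of Corollary \ref{cor:energy_weight} with $k = l = 1$ and $B = A = \G$:
$$
|\G|^2 \E^2(\G) \le \E_3(\G) \sum_x |\G \pm \G_x|\, |\G_x|^2 .
$$
Substituting $\E_3(\G) \ll |\G|^3 \log|\G|$ from Corollary \ref{cor:E_l} and applying the Katz--Koester bound $|\G \pm \G_x| \le (Q \c Q)(x)$ (with $Q = \G \pm \G$) reduces matters to bounding
$$
S := \sum_x (Q \c Q)(x)\,(\G \c \G)^2(x)
$$
by $|\G|^{35/9} \log^{O(1)} |\G|$. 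Since both $(Q \c Q)$ and $(\G \c \G)$ are $\G$-invariant, $S$ is $|\G|$ times a sum over coset representatives, which I would estimate by dyadic decomposition on the level sets of $(\G \c \G)$, using the arrangement bound $(\G \c \G)(\eta_j) \ll |\G|^{2/3} j^{-1/3}$ from Theorem \ref{t:3_d_moment} and an analogous bound on $(Q \c Q)(\xi_j)$ from Lemma \ref{l:improved_Konyagin_old}. The eigenvalue inequality of Proposition \ref{p:eigenfunctions_Gamma} supplies the sharpening beyond the Heath--Brown--Konyagin exponent $5/2$ that was reached in Theorem \ref{t:subgroups_energy&sumset}.

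For the complementary bound $\E(\G) \ll |\G|^3 p^{-1/3} \log^{4/3} |\G|$, relevant when $|\G|$ exceeds $p^{3/5}$, I would use Plancherel $\E(\G) = p^{-1} \sum_\xi |\FF{\G}(\xi)|^4$ together with the Fourier estimate $|\FF{\G}(\xi)|^2 \ll p^{1/4} \E^{1/2}(\G)$ from Lemma \ref{l:G-inv_bound_F}. Splitting off $\xi = 0$ and controlling the tail by Parseval yields the self-improving inequality
$$
\E(\G) \ll \frac{|\G|^4}{p} + p^{1/4} |\G|\, \E^{1/2}(\G) ,
$$
which resolves to the claimed bound; taking the maximum with the first estimate produces (\ref{f:subgroup_energy}).

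For the higher moments $\T_k(\G)$ in (\ref{f:subgroup_T_k_new}), I would argue inductively: apply Corollary \ref{cor:energy_weight} with suitable $(k,l)$ and feed in the higher-moment Stepanov inputs from Theorem \ref{t:3_d_moment}, using $\E(\G) \ll |\G|^{22/9}$ as the base case. The geometric factor $\tfrac{16}{3} \cdot 2^{-2k}$ in the exponent reflects the compounded correction, halved at each iteration step. The main obstacle throughout is the sharp dyadic estimation of $S$: extracting the exponent $22/9$ (the $\eps_0 = 1/18$ improvement over $5/2$) requires delicately balancing the Stepanov bounds on $(\G \c \G)$ and $(Q \c Q)$ against the dyadic levels, fully exploiting both the $\G$-invariance and the eigenvalue structure rather than the raw moment bounds alone.
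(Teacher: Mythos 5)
Your proposal misidentifies the starting inequality, and this is a fatal gap. Starting from formula~(\ref{f:energy_wight_1}) of Corollary~\ref{cor:energy_weight} with $k=l=1$ and then applying Katz--Koester and the Stepanov bounds for $(Q\c Q)$, $(\G\c\G)$ is exactly the route of Theorem~\ref{t:subgroups_energy&sumset}, and it terminates with the \emph{conditional} estimate $\E(\G)\ll|\G|^{4/3}|\G\pm\G|^{2/3}\log|\G|$. That inequality cannot be converted into an unconditional $\E(\G)\ll|\G|^{22/9}$ because you would need the \emph{upper} bound $|\G\pm\G|\ll|\G|^{5/3}$, whereas the only thing available about $|\G\pm\G|$ is a \emph{lower} bound of the same order; with the trivial $|\G\pm\G|\le|\G|^2$ you only recover $\E(\G)\ll|\G|^{8/3}\log|\G|$, which is worse than the Heath--Brown--Konyagin exponent. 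The paper's proof does not pass through any sumset at all: it opens with the eigenvalue trilinear inequality~(\ref{f:C_3_lower_bound}) of Proposition~\ref{p:eigenfunctions_Gamma} with $\psi=u=\G\c\G$, $v=\G$, producing the self-referential bound
\begin{equation*}
\frac{\E^3(\G)}{|\G|^3}
\ \le\
\sum_{\a,\beta}\psi(\a)\psi(\beta)\psi(\a-\beta)\,\Cf_3(\G)(\a,\beta)
\end{equation*}
(equivalently~(\ref{f:heart_2})), and then decomposes dyadically in the level sets $S_i$ of $\psi$ and applies Lemma~\ref{l:improved_Konyagin_old}. The eigenvalue method here replaces, rather than supplements, the weighted Katz--Koester step, and you should make that substitution explicit.

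Your treatment of the large-$|\G|$ regime is also too weak. Using Lemma~\ref{l:G-inv_bound_F} globally on $\G$ together with Parseval gives $\E(\G)\ll|\G|^4/p+p^{1/4}|\G|\,\E^{1/2}(\G)$, which self-improves only to $\E(\G)\ll\max\{|\G|^4/p,\ p^{1/2}|\G|^2\}$; at the crossover $|\G|\sim p^{3/5}$ this is $p^{17/10}$, considerably larger than the theorem's $p^{22/15}$. The factor $p^{-1/3}$ in~(\ref{f:subgroup_energy}) cannot be reached by a single global Fourier bound on $\G$: in the paper the Fourier input from Lemma~\ref{l:G-inv_bound_F} is applied inside the dyadic decomposition to the individual level sets $S_i,S_j,S_k$, yielding $\sum_\a S_i(\a)(S_j*S_k)(\a)\ll\max\{p^{-1}|S_i||S_j||S_k|,\ \sqrt{p/t}\,(|S_i||S_j||S_k|)^{1/2}\}$ and is then combined with the localized estimate~(\ref{tmp:04.11.2012_2}). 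For the $\T_k$ bounds the correct move is likewise not induction through Corollary~\ref{cor:energy_weight} but the same trilinear inequality with $\psi=(\G*_{k-1}\G)\c(\G*_{k-1}\G)$ fed into the level-set bounds of Theorem~\ref{t:3_d_moment}.
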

\begin{proof}
Let $|\G|=t$, $\E_3 (\G) = \E_3$, $\E(\G) = \E = t^3 / K$, $K\ge 1$, $\T_l = \T_l (\G)$, $l\ge 2$.
We need to find the lower bound for $K$ and the upper bound for $\T_k$.
Put
$$
    \sigma_* = \sum_{x\in \G} (\G*(\G \c \G))^2 (x) \,.
$$
By Cauchy--Schwarz
$$
    \sigma_* \ge \frac{\E^2}{t} = \frac{t^5}{K^2}
$$
(actually in the case of multiplicative subgroups equality holds).
Applying formula (\ref{f:C_3_lower_bound}) of Proposition \ref{p:eigenfunctions_Gamma}
with $\psi (x) = u(x) = (\G\c \G) (x)$, $v(x) = \G(x)$ and the coset $-\G$, we obtain
$$
    \sum_{x,y,z\in \G} \psi (y-x) \psi (z-x) \psi (y-z)
        \ge
            \frac{\E}{t^2} \cdot \sigma_* \,.
$$
In other words
\begin{equation}\label{tmp:19.07.2012}
    \sum_{\a,\beta} \psi (\a) \psi (\beta) \psi (\a-\beta) \Cf_3 (\G) (\a,\beta)
        \ge
            \frac{\E}{t^2} \cdot \sigma_* \,.
\end{equation}
Clearly,
\begin{equation}\label{tmp:19.07.2012_1}
    \sum_{\a \neq 0, \beta \neq 0, \a \neq \beta} \psi (\a) \psi (\beta) \psi (\a-\beta) \Cf_3 (\G) (\a,\beta)
        \ge
            2^{-1} \frac{\E}{t^2} \cdot \sigma_*
\end{equation}
because
if $\a,\beta$ or $\a-\beta$ equals zero then
$$
    t \E_3 (\G) \gg \frac{t^6}{K^3}
$$
which implies $K\gg t^{2/3} \log^{-1/3} t$ and the result follows.
%Note also (\ref{tmp:19.07.2012_1}) implies that
Further the summation in (\ref{tmp:19.07.2012_1}) can be taken over nonzero $\a$ such that
\begin{equation}\label{tmp:19.07.2012_1*}
    \psi (\a) \ge 2^{-4} \frac{\E}{t^2} := d
\end{equation}
because of for other $\a$, we have
$$
    3d \sigma_* < 2^{-1} \frac{\E}{t^2} \cdot \sigma_*
$$
with contradiction.
In the last formula we have use the fact that $\G$ is a subgroup.
Thus suppose that formula
\begin{equation}\label{tmp:19.07.2012_1}
    \sum_{\a \neq 0, \beta \neq 0, \a \neq \beta ~:~ \psi (\a), \psi (\beta), \psi (\a-\beta) \gg d}
             \psi (\a) \psi (\beta) \psi (\a-\beta) \Cf_3 (\G) (\a,\beta)
        \ge
            2^{-2} \frac{\E}{t^2} \cdot \sigma_*
\end{equation}
takes place, where $d$ is defined by (\ref{tmp:19.07.2012_1*}).
By one more application of the Cauchy--Schwarz,
we obtain
\begin{equation}\label{f:energy_start}
    \sum_{\a \neq 0, \beta \neq 0, \a \neq \beta ~:~ \psi (\a), \psi (\beta), \psi (\a-\beta) \gg d}\,
            \psi^2 (\a) \psi^2 (\beta) \psi^2 (\a-\beta)
        \gg
            \frac{\E^2}{t^4} \cdot \sigma^2_* \E^{-1}_3 \gg \frac{\E^6}{t^6 \E_3}\,.
\end{equation}
Put
$$
    S_i = \{ x\in \G-\G\,, x\neq 0 ~:~ 2^{i-1} d < \psi (x) \le 2^i d \} \,, \quad i \in [l] \,, \quad l \ll \log t \,.
$$
Then
\begin{equation}\label{tmp:19.07.2012_2}
    d^6 \cdot \sum_{i,j,k = 1}^l 2^{2i+2j+2k} \sum_\a S_i (\a) (S_j * S_k) (\a)
        \gg
            \frac{\E^2}{t^4} \cdot \sigma^2_* \E^{-1}_3 \,.
\end{equation}
To estimate the inner sum in (\ref{tmp:19.07.2012_2}) we use Lemma \ref{l:improved_Konyagin_old}.
Suppose that for all $i,j,k\in [l]$ the following
two inequalities hold
\begin{equation}\label{cond:S_ijk_1}
    |S_i| |S_j| |S_k| \ll t^5
\end{equation}
and
\begin{equation}\label{cond:S_ijk_2}
    |S_i| |S_j| |S_k| t \ll p^3 \,.
\end{equation}
Then by Lemma \ref{l:improved_Konyagin_old}
$$
    d^6 t^{-1/3} \cdot \sum_{i,j,k = 1}^l 2^{2i+2j+2k} (|S_i| |S_j| |S_k|)^{2/3}
        \gg
            \frac{\E^2}{t^4} \cdot \sigma^2_* \E^{-1}_3 \,.
$$
We can suppose that $K \ll t^{5/9} \log^{-2/3} t$ because otherwise the result is trivial.
Note also a trivial upper bound for the size of any $S_i$, namely
\begin{equation}\label{tmp:19.07.2012_4}
    2^{i-1} d |S_i| \le \sum_{x \neq 0} \psi (x) \le t^2 \,.
\end{equation}
or in other words
$$
    |S_i| \ll 2^{-i} K t \ll K t \,.
$$
In particular
\begin{equation}\label{tmp:19.07.2012_3}
    t^3 |S_i| \ll t^4 K \ll t^4 t^{5/9} \log^{-2/3} t \ll p^3
\end{equation}
because of $t\ll p^{27/41}$.
In view of (\ref{tmp:19.07.2012_3}), a trivial inequality $|S_i| t^2 \ll t^5$,
 and Lemma \ref{l:improved_Konyagin_old},
we obtain
\begin{equation}\label{f:S_i_main}
    |S_i| \ll \frac{t^3}{2^{3i} d^3}  \,.
\end{equation}
A little bit worse bound
\begin{equation}\label{f:S_i_main'}
    |S_i| \ll \frac{t^3 \log t}{2^{3i} d^3}
\end{equation}
but for all $t\ll p^{2/3}$
follows from the estimate of $\E_3$, see Corollary \ref{cor:E_l}.
Substituting
%the last inequality
(\ref{f:S_i_main})
into (\ref{tmp:19.07.2012_2}) gives us
$$
    t^{6-1/3} \log^3 t \gg \frac{\E^2}{t^4} \cdot \sigma^2_* \E^{-1}_3
$$
and after some calculations we obtain $K \gg t^{5/9} \log^{-2/3} t$.
%Now we need
It is remain
to check (\ref{cond:S_ijk_1}), (\ref{cond:S_ijk_2}).
Applying (\ref{tmp:19.07.2012_4}) and $K \ll t^{5/9} \log^{-2/3} t$, we have
\begin{equation}\label{tmp:23.08.2012_*}
    |S_i| |S_j| |S_k| \ll  (K t)^3 \cdot 2^{-(i+j+k)}
        \ll (K t)^3 \ll t^{14/3} \log^{-2} t \ll t^5
\end{equation}
and inequality (\ref{cond:S_ijk_1}) holds.
%Finally (\ref{tmp:19.07.2012_1*}) implies that
%$$
%    2^{i+j+k} \gg \frac{t^3 \E_3^{3/2}}{\E^3} \,.
%$$
%%Thus
Finally
\begin{equation}\label{tmp:prod_p}
    |S_i| |S_j| |S_k| t \ll t^{17/3} \log^{-2} t \cdot 2^{-(i+j+k)} \ll t^{17/3} \log^{-2} t \ll p^3
\end{equation}
%because of
provided by
$t \ll p^{\frac{9}{17}}$ and $K \ll t^{5/9} \log^{-2/3} t$.

Now let us prove the same for larger $t$.
Returning to (\ref{tmp:19.07.2012_2}),
applying the first bound from estimate (\ref{f:G-inv_bound_F}) of Lemma \ref{l:G-inv_bound_F}
and using Fourier transform, we obtain
\begin{equation}\label{tmp:22.08.2012_1}
    \sum_\a S_i (\a) (S_j * S_k) (\a) \ll \max\{ p^{-1} |S_i| |S_j| |S_k|, \sqrt{p/t} (|S_i| |S_j| |S_k|)^{1/2} \} \,.
\end{equation}
We have used the first formula of Lemma \ref{l:G-inv_bound_F}
it is the most effective in the choice of parameters.
%Note that inequality $t \ll p^{\frac{9}{17}}$ says that the first formula
%of Lemma \ref{l:G-inv_bound_F} is the most effective.
If the maximum from (\ref{tmp:22.08.2012_1}) is attained on the first term then by (\ref{tmp:19.07.2012_2}),
and trivial inequality
\begin{equation}\label{tmp:04.11.2012_2}
    |S_j| d^2 2^{2j} \le \E \,,
\end{equation}
we get
\begin{equation}\label{tmp:22.08.2012_2'}
    \E \ll \frac{t^3 \log^{4/3} t}{p^{1/3}} \,,
\end{equation}
and if it is attained on the second term, we have by (\ref{tmp:04.11.2012_2})
\begin{equation}\label{tmp:22.08.2012_2''}
    2^{i+j+k} \gg \frac{\E^{3/2} t^{1/2} }{ p^{1/2} \E_3 \log^3 t } \,.
\end{equation}
Simple computations show that having (\ref{tmp:22.08.2012_2'}) we easily get (\ref{f:subgroup_energy})
for $t\ll p^{3/5} \log^{-6/5} t \ll p^{21/47}$.
Further by (\ref{tmp:04.11.2012_2}) we have an analog of (\ref{tmp:prod_p})
\begin{equation}\label{tmp:06.11.2012_1}
    |S_i| |S_j| |S_k| t
        \ll
            \frac{\E^3}{d^6} t 2^{-2(i+j+k)}
                \ll t^{17/3}  2^{-2(i+j+k)} \log^{-2} t \ll p^3
\end{equation}
%
%
%  Подстановка из E_3 (или t^3) дает теже 3/5.
%
%
Thus substitution (\ref{tmp:22.08.2012_2''}) into (\ref{tmp:06.11.2012_1})
gives $t\ll p^{3/5} \log^{-6/5} t$.
This completes the proof of inequality (\ref{f:subgroup_energy_restriction}).
Bound (\ref{f:subgroup_energy}) is obtained by accurate calculations using
inequality (\ref{f:S_i_main'})
in the wide range $t\ll p^{2/3}$ and estimate (\ref{tmp:22.08.2012_2'}).

\bigskip

To get (\ref{f:subgroup_T_k_new}) take $\psi (x) = ((\G *_{k-1} \G) \c (\G *_{k-1} \G)) (x)$
and use previous arguments.
We have
$$
    \sum_{\a,\beta} \psi (\a) \psi (\beta) \psi (\a-\beta) \Cf_3 (\G) (\a,\beta)
        \ge
            t^{-3} \T^3_{k+1} (\G)
$$
and if $\a$, $\beta$ or $\a-\beta$ equals zero then by Theorem \ref{t:3_d_moment}, we get
$$
    t^{6k-4+3^{-1} (1+2^{3-2k}) + 2^{1-k} } \cdot \log t \gg_k \T_k (\G) \cdot \sum_{x} \psi^2 (x) (\G \c \G) (x)
            \gg
        t^{-3} \T^3_{k+1} (\G)
$$
and the result follows.
As above
$$
    \sum_{\a \neq 0, \beta \neq 0, \a \neq \beta ~:~ \psi (\a), \psi (\beta), \psi (\a-\beta) \gg d}\,
            \psi^2 (\a) \psi^2 (\beta) \psi^2 (\a-\beta)
        \gg
            t^{-6} \T^6_{k+1} (\G) \cdot \E^{-1}_3 \,,
$$
where
$$
    d = \frac{\T^3_{k+1}}{t^3 \T_{2k} \E^{1/2}_3} \,.
$$
Consider the sets $S_i$ similar way, we obtain by Theorem \ref{t:3_d_moment} that
$|S_i| \ll t^{6k-4+2^{2-2k}} / 2^{3i} d^3$
and hence
\begin{equation}\label{tmp:23.07.2012_1}
    t^{12k-8+ 2^{3-2k} - 1/3} \log^3 t \gg t^{-6} \T^6_{k+1} (\G) \cdot \E^{-1}_3 \,,
\end{equation}
provided by inequalities (\ref{cond:S_ijk_1}), (\ref{cond:S_ijk_2}) hold.
Inequality (\ref{tmp:23.07.2012_1}) implies that
$$
    \T_{k+1} \ll t^{2k+1/9+2^{2-2k}/3} \log^{2/3} t
$$
and we are done.
Using Theorem \ref{t:3_d_moment} it is easy to check that
%inequality
(\ref{cond:S_ijk_1}) takes place.
Hence, because of $t< \sqrt{p}$  inequality (\ref{cond:S_ijk_2}) holds automatically.
This completes the proof.
$\hfill\Box$
\end{proof}

\bigskip

Thus, inequality (\ref{f:subgroup_T_k_new}) is better then Theorem \ref{t:3_d_moment} for
% small
$k=2$ and for $k=3$, namely, $\T_3 (\G) \ll t^{151/36} \log^{2/3} t$.
% provided by .
Using more accurate  arguments from \cite{K_Tula}
%(in spirit of the proof of the second bound for $\E(\G)$ from the theorem above)
one can, certainly,  improve our bounds for large $k$.
We do not make such calculations.
%Also we do not interested in

Note, finally, that
inequality (\ref{f:subgroup_energy_restriction}) gives bounds for $\E(\G)$
which are
better than  Theorem \ref{t:3_d_moment}  if
%$|\G| \ll p^{\kappa}$,
$|\G| \ll p^{\frac{2}{3}} \log^{-\frac{8}{3}} p$.
%$\kappa < 4/7$.
%$\kappa < 30/53$.
%$\kappa < 2/3$.
%%Of course this range can be extended if it is allowed to obtain worse bounds
%%for the additive energy than (\ref{f:subgroup_T_k_new}).

\bigskip

Now we formulate Corollary 39 from \cite{ss_E_k},
which was obtained by eigenvalues method of section \ref{sec:eigenvalues} also.
% in particular case $k=1$.

\begin{corollary}
    Let $p$ be a prime number,
    $\Gamma_* \subseteq \f^*_q$
    be a coset of a multiplicative subgroup $\Gamma$.
    If $Q^{(y)} \subseteq Q^k,\, y\in \Gamma'$ is an arbitrary family
    of sets, then
    $$
        \Big| \bigcup_{y\in \Gamma'} (Q^{(y)} \pm \Delta (y)) \Big|
            \ge \frac{|\Gamma|}{|\Gamma'| \E_{k+1} (\Gamma_*,Q)} \cdot \Big( \sum_{y\in \Gamma'} |Q^{(y)}| \Big)^2 \,.
    $$
    In particular for every set $A \subseteq \Gamma_*$, and every $\Gamma$--invariant set $Q$, we have
    \begin{equation}\label{f:subgroups_pred1}
        |Q+A| \ge |A| \cdot \frac{|\Gamma| |Q|^2}{\E_2 (\Gamma_*,Q)} \,.
    \end{equation}
\label{c:subgroups}
\end{corollary}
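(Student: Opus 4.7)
The plan is to combine a single application of Cauchy--Schwarz with the eigenvalue method of Section~\ref{sec:eigenvalues}, transported to the coset $\G_*$ via Remark~\ref{note:cosets_eigen}. First, I would introduce the multiplicity function $F:\F_q^k\to\Z$ by $F(x)=\sum_{y\in\G'}Q^{(y)}(x\mp\Delta(y))$, so that $\supp F\subseteq U:=\bigcup_{y\in\G'}(Q^{(y)}\pm\Delta(y))$ and $\sum_x F(x)=\sum_y|Q^{(y)}|$. Cauchy--Schwarz applied to $\sum_x F(x)\cdot U(x)$ immediately yields $|U|\cdot\|F\|_2^2\ge\bigl(\sum_y|Q^{(y)}|\bigr)^2$, which reduces the entire problem to the reverse bound $\|F\|_2^2\le|\G'|\,\E_{k+1}(\G_*,Q)/|\G|$.

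To analyse $\|F\|_2^2$, expand it as $\|F\|_2^2=\sum_{y_1,y_2\in\G'}|Q^{(y_1)}\cap(Q^{(y_2)}\pm\Delta(y_1-y_2))|$. The inclusion $Q^{(y)}\subseteq Q^k$ and the coordinatewise identity $|Q^k\cap(Q^k\pm\Delta(z))|=(Q\c Q)(z)^k$, followed by a routine change of variable in the double sum, give $\|F\|_2^2\le\sum_z(\G'\c\G')(z)\,(Q\c Q)^k(z)$. The right hand side is precisely the Hermitian form $\langle T\G',\G'\rangle$, where $T$ is the operator on functions supported on $\G_*$ with kernel $\psi(x-y)$ and $\psi:=(Q\c Q)^k$. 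Implicit here are the natural hypotheses $\G'\subseteq\G_*$ and the $\G$-invariance of $Q$, both of which hold in the ``in particular'' special case.

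The final step is to bound this Hermitian form by $\mu_0\|\G'\|_2^2=|\G'|\mu_0$, where $\mu_0$ is the top eigenvalue of $T$. Two facts align: $\psi$ is $\G$-invariant because $Q$ is, and $\FF{\psi}\ge 0$ because it is the $k$-fold convolution of $|\FF{Q}|^2\ge 0$. Hence Remark~\ref{note:cosets_eigen} diagonalises $T$ in the orthonormal basis $\{\chi_\a^{\xi}\}$, and the positivity of the kernel $\psi(x-y)\ge 0$ together with Perron--Frobenius forces the spectral radius to be attained on the positive eigenfunction $\chi_0^{\xi}\propto\G_*$, with value $\mu_0=|\G|^{-1}\sum_x\psi(x)(\G_*\c\G_*)(x)=\E_{k+1}(\G_*,Q)/|\G|$ (this identity is the equality case of the lower bound (\ref{f:connected}) in Proposition~\ref{p:eigenfunctions_Gamma} with $u=\G_*$). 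Assembling the three steps gives the claim. The main obstacle is exactly this last step: Proposition~\ref{p:eigenfunctions_Gamma} as stated only delivers a \emph{lower} bound on the Hermitian form, so one must supplement it with the Perron--Frobenius identification of $\mu_0$ as the spectral maximum. The ``in particular'' conclusion is then the specialisation $k=1$, $\G'=A$, $Q^{(y)}\equiv Q$: the Cauchy--Schwarz bound $|A+Q|\cdot\E(A,Q)\ge|A|^2|Q|^2$ and the eigenvalue estimate $\E(A,Q)\le|A|\,\E_2(\G_*,Q)/|\G|$ combine to give the displayed inequality.
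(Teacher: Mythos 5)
Your proof is correct: the paper itself supplies no argument for this corollary (it is quoted from Corollary 39 of \cite{ss_E_k} with the remark that it was obtained by the eigenvalues method of Section \ref{sec:eigenvalues}), and your reconstruction --- Cauchy--Schwarz on the fibre-counting function $F$, the reduction of $\|F\|_2^2$ to $\sum_z(\G'\c\G')(z)(Q\c Q)^k(z)$, and the spectral bound with the Perron--Frobenius identification of $\G_*$ as the top eigenfunction of the kernel $\psi(x-y)$, $\psi=(Q\c Q)^k$ --- is exactly that method, correctly executed. You also rightly flag the hypotheses $\G'\subseteq\G_*$ and the $\G$-invariance of $Q$, which the statement as printed omits but which both the argument and the cited source require, and you correctly note that Proposition \ref{p:eigenfunctions_Gamma} alone gives only the lower bound, so the identification of $\mu_0$ as the spectral maximum is a genuinely needed supplementary step.
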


Corollary above combining with Theorem \ref{t:subgroup_energy}
say that multiplicative subgroups have strong expanding property.

\begin{corollary}
    Let $p$ be a prime number,
    $\Gamma_* \subseteq \f^*_q$ be a coset of a multiplicative subgroup $\Gamma$, $|\G| \ll p^{\frac{6}{11}}$.
    Then for any  $A \subseteq \Gamma_*$, we have
    $$
        |A+\G| \gg \frac{|A| |\G|^{5/9}}{\log^{2/3} |\G|} \,.
    $$
\label{cor:expanding_strong}
\end{corollary}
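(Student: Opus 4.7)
The plan is to combine Corollary \ref{c:subgroups} with the sharp energy estimate (\ref{f:subgroup_energy_restriction}) from Theorem \ref{t:subgroup_energy}. First I would invoke Corollary \ref{c:subgroups} with $Q=\Gamma$, which is $\Gamma$-invariant; specialization of (\ref{f:subgroups_pred1}) yields
$$
|A+\Gamma| \;\ge\; |A|\cdot\frac{|\Gamma|^{3}}{\E(\Gamma_*,\Gamma)}\,,
$$
so the whole problem reduces to obtaining the upper bound
$\E(\Gamma_*,\Gamma)\ll |\Gamma|^{22/9}\log^{2/3}|\Gamma|$ in the stated range of $|\Gamma|$.

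Next I would reduce the mixed energy to the usual additive energy of the subgroup itself. Writing $\Gamma_*=\xi\Gamma$ for some $\xi\in\F_q^*$, one checks from the definition of the Fourier transform that $|\FF{\Gamma_*}(\eta)|=|\FF{\Gamma}(\xi\eta)|$, so by (\ref{f:energy_Fourier}) the sum $\sum_\eta|\FF{\Gamma_*}(\eta)|^{4}$ is a rearrangement of $\sum_\eta|\FF{\Gamma}(\eta)|^{4}$, giving $\E(\Gamma_*)=\E(\Gamma)$. Applying Cauchy--Schwarz in the Fourier representation
$\E(\Gamma_*,\Gamma)=N^{-1}\sum_\eta|\FF{\Gamma_*}(\eta)|^{2}|\FF{\Gamma}(\eta)|^{2}$
then yields $\E(\Gamma_*,\Gamma)\le\sqrt{\E(\Gamma_*)\,\E(\Gamma)}=\E(\Gamma)$.

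Finally, since $6/11<3/5$, the hypothesis $|\Gamma|\ll p^{6/11}$ is strictly stronger than the condition $|\Gamma|\ll p^{3/5}\log^{-6/5}p$ required by (\ref{f:subgroup_energy_restriction}) for all sufficiently large $p$. That estimate therefore delivers $\E(\Gamma)\ll|\Gamma|^{22/9}\log^{2/3}|\Gamma|$, and substitution into the displayed lower bound for $|A+\Gamma|$ gives exactly the claim. I do not anticipate any real obstacle: the result is essentially a plug-in of the two cited theorems, with the only mildly substantive step being the dilation-invariance observation that lets $\E(\Gamma_*,\Gamma)$ be replaced by $\E(\Gamma)$ at no cost.
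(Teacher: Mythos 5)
Your proposal is correct and follows essentially the same route as the paper, which simply states that Corollary \ref{c:subgroups} combined with Theorem \ref{t:subgroup_energy} gives the result. You rightly take $Q=\Gamma$ in (\ref{f:subgroups_pred1}), observe via the dilation identity $\FF{\Gamma_*}(\eta)=\FF{\Gamma}(\xi\eta)$ and Cauchy--Schwarz that $\E(\Gamma_*,\Gamma)\le\E(\Gamma)$, and then invoke (\ref{f:subgroup_energy_restriction}) (valid since $p^{6/11}\ll p^{3/5}\log^{-6/5}p$ for large $p$) to conclude; this is exactly the content the paper leaves implicit.
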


Ordinary application of Cauchy---Schwarz gives $|A+\G| \gg |A|^{1/2} |\G|^{7/9} \log^{-1/3} |\G|$
for any set $A$ and any multiplicative subgroup $\G$, $|\G| \ll \sqrt{p}$.

Theorem \ref{t:subgroup_energy} gives a direct application to the exponential sums over subgroups.

\begin{corollary}
    Let $p$ be a prime number, $\G$ be a multiplicative subgroup, $|\G| \ll p^{\frac{6}{11}}$.
    Then
\begin{equation}\label{f:exp_sums_subgr}
    \max_{\xi \neq 0} |\FF{\G} (\xi)| \ll \min\{ p^{1/8} |\G|^{11/18}, p^{1/4} |\G|^{13/36} \} \cdot \log^{1/6} |\G| \,.
\end{equation}
\label{cor:exp_sums_subgr}
\end{corollary}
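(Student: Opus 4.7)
The plan is to combine the Fourier bound of Lemma \ref{l:G-inv_bound_F} for $\G$--invariant sets with the refined energy estimate from Theorem \ref{t:subgroup_energy}, specializing everything to $Q=\G$. Since $\G\cdot\G=\G$, the set $\G$ is itself $\G$--invariant, so the lemma applies directly.

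First I would apply Lemma \ref{l:G-inv_bound_F} with $Q=\G$. The first of the three bounds in (\ref{f:G-inv_bound_F}) collapses to the trivial estimate $|\FF{\G}(\xi)|\le p^{1/2}$ and can be discarded. Using $|Q|=|\G|$ and $\E(Q)=\E(\G)$, the second and third bounds become
\begin{equation*}
    |\FF{\G}(\xi)|\le \frac{p^{1/4}\E^{1/4}(\G)}{|\G|^{1/4}} \quad \text{and} \quad |\FF{\G}(\xi)|\le p^{1/8}\E^{1/4}(\G)
\end{equation*}
for every nonzero $\xi$.

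Next I would invoke the sharp bound $\E(\G)\ll |\G|^{22/9}\log^{2/3}|\G|$ from (\ref{f:subgroup_energy_restriction}) of Theorem \ref{t:subgroup_energy}. The hypothesis $|\G|\ll p^{6/11}$ is strictly stronger than the restriction $|\G|\ll p^{3/5}\log^{-6/5}p$ required for the refined energy estimate (since $6/11<3/5$), so the substitution is legitimate. Raising the energy bound to the fourth root yields a factor $|\G|^{11/18}\log^{1/6}|\G|$; plugging this into the two displayed inequalities produces $p^{1/4}|\G|^{13/36}\log^{1/6}|\G|$ and $p^{1/8}|\G|^{11/18}\log^{1/6}|\G|$, respectively, and taking the minimum gives (\ref{f:exp_sums_subgr}).

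There is no serious obstacle here: the argument is a straightforward composition of the Fourier bound of Lemma \ref{l:G-inv_bound_F} with the new energy estimate, and the only thing to verify is the numerical inequality $6/11 < 3/5$ that places us inside the admissible range of Theorem \ref{t:subgroup_energy}. The genuine content of the corollary is entirely carried by the improved bound (\ref{f:subgroup_energy_restriction}); the exponent $11/18$ in the exponential sum estimate is nothing more than $22/9$ divided by $4$, and the logarithmic factor $\log^{1/6}|\G|$ is $(\log^{2/3}|\G|)^{1/4}$.
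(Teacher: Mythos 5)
Your proposal is correct and follows essentially the same route as the paper: the paper likewise quotes the two bounds $\rho\le p^{1/8}\E^{1/4}(\G)$ and $\rho\le p^{1/4}|\G|^{-1/4}\E^{1/4}(\G)$ (citing Lemma \ref{l:G-inv_bound_F}, which you correctly specialize to $Q=\G$) and then inserts the energy estimate of Theorem \ref{t:subgroup_energy}. Your arithmetic ($11/18=22/36$, $13/36=11/18-1/4$, $\log^{1/6}=(\log^{2/3})^{1/4}$) and the verification that $|\G|\ll p^{6/11}$ places you in the range $|\G|\ll p^{3/5}\log^{-6/5}p$ are both accurate.
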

\begin{proof}
    Let $\rho = \max_{\xi \neq 0} |\FF{\G} (\xi)|$.
    Because of $\rho \le p^{1/8} \E^{1/4} (\G)$ and
    $\rho \le p^{1/4} |\G|^{-1/4} \E^{1/4} (\G)$
    (see e.g. Corollary 2.5 from \cite{ss} or Lemma \ref{l:G-inv_bound_F}),
    applying Theorem \ref{t:subgroup_energy},
    we obtain (\ref{f:exp_sums_subgr}).
    This completes the proof.
    %and we are done.
    %the required result.
$\hfill\Box$
\end{proof}

\bigskip

For any function $f:\G \to \mathbb{C}$ by $\T^{\m}_k (f)$ denote the quantity
$$
    \T^{\m}_k (f)
        =
            \sum_{x_1,\dots,x_k,x'_1,\dots,x'_k ~:~ x_1\dots x_k = x'_1 \dots x'_k} f(x_1) \dots f(x_k) \ov{f(x'_1)} \dots \ov{f(x'_k)} \,.
$$
$\T^{\m}_k (f)$ is a multiplicative analog of $\T_k (f)$ from section \ref{sec:definitions}.
Write also $\E^\m$ for $\T^\m_2$.

%First of all,
Using the eigenvalues method, we want to find
some relations between $\T^{\m}_k (A)$ and another characteristics of
an arbitrary subset $A$ of a multiplicative subgroup.
We need in a simple lemma.

\begin{lemma}
\label{l:T_k_in_subgroups}
    Let $\Gamma\subseteq \f^*_q$ be a multiplicative subgroup.
    Suppose that $f(x) = \sum_\a c_\a \chi_\a(x)$ is an arbitrary function
    with support on $\G$.
    Then
    $$
        \T^{\m}_k (f) = |\G|^{k-1} \sum_\a |c_\a|^{2k} \,.
    $$
\end{lemma}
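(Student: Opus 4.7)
The plan is to interpret $\T^{\m}_k(f)$ as a squared $L^2$--norm of a multiplicative convolution and then perform Fourier analysis on the cyclic group $\G$ under multiplication. Precisely, I would first write
$$
    \T^{\m}_k(f) = \sum_z |g(z)|^2, \qquad g(z) := \sum_{x_1 \cdots x_k = z} f(x_1) \cdots f(x_k) \,.
$$
Since $\supp f \subseteq \G$ and $\G$ is closed under multiplication, $g$ is supported on $\G$, so everything takes place on the cyclic group $\G$.

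Next I would substitute $f = \sum_\a c_\a \chi_\a$ and exploit the multiplicative character property of the functions $\chi_\a$ on $\G$. Parametrizing $x = \mathbf{g}^{nl}$, formula (\ref{f:chi_Gamma}) gives $\chi_\a(xy) = |\G|^{1/2} \chi_\a(x)\chi_\a(y)$ for $x,y \in \G$, i.e.\ $|\G|^{1/2}\chi_\a|_\G$ is a genuine character of the cyclic group $\G$. Writing $z = \mathbf{g}^{nm}$, the expansion becomes
$$
    g(z) = \sum_{\a_1,\dots,\a_k} c_{\a_1}\cdots c_{\a_k} \cdot |\G|^{-k/2}
                \sum_{l_1+\dots+l_k \equiv m\ (\mathrm{mod}\ |\G|)} e\bigl( (\a_1 l_1 + \dots + \a_k l_k)/|\G| \bigr) \,.
$$

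The inner sum, over free $l_1,\dots,l_{k-1}$, vanishes unless $\a_1 = \dots = \a_k =: \a$, in which case it equals $|\G|^{k-1} e(\a m/|\G|)$. This is the core orthogonality step and the only piece that is even mildly delicate; the rest is bookkeeping. Collecting the factors and converting $e(\a m/|\G|)$ back to $|\G|^{1/2}\chi_\a(z)$ for $z\in \G$ yields
$$
    g(z) = |\G|^{(k-1)/2} \sum_\a c_\a^k \chi_\a(z), \qquad z \in \G \,.
$$

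Finally I would apply the orthonormality relation $\sum_{z\in \G} \chi_\a(z)\overline{\chi_\beta(z)} = \delta_{\a,\beta}$ to get
$$
    \T^{\m}_k(f) = \sum_z |g(z)|^2 = |\G|^{k-1} \sum_{\a,\beta} c_\a^k \,\overline{c_\beta^k}\, \delta_{\a,\beta}
            = |\G|^{k-1} \sum_\a |c_\a|^{2k} \,,
$$
as required. There is no genuine obstacle; the only thing to be careful about is the $|\G|^{-1/2}$ normalization built into (\ref{f:chi_Gamma}), which is precisely what produces the prefactor $|\G|^{k-1}$ rather than a bare sum of $2k$-th powers.
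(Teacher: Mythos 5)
Your proof is correct and rests on the same key fact as the paper's: the multiplicative character property of the $\chi_\a$ on the cyclic group $\G$ together with orthogonality. The paper simply expands $\sum_\a |c_\a|^{2k}$ directly and applies the $2k$-fold orthogonality in one step, whereas you route the identical computation through the $k$-fold multiplicative convolution and Parseval; this is a cosmetic reorganization, not a different argument.
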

\begin{proof}
    By the multiplicative property of the functions $\chi_\a (x)$, we have
 $$
        \sum_\a |c_\a|^{2k} = \sum_\a |\sum_x f(x) \chi_\a (x)|^{2k} =
 $$
%\begin{equation}\label{tmp:07.11.2011_1}
$$
        \sum_\a \sum_{x_1,\dots,x_k,x'_1,\dots,x'_k}
            f(x_1) \dots f(x_k) \ov{f(x'_1)} \dots \ov{f(x'_k)} \chi_\a (x_1) \dots \chi_\a (x_k) \ov{\chi_\a (x'_1)} \dots \ov{\chi_\a (x'_k)}
        = \frac{\T^\m_k (P)}{|\G|^{k-1}}
        %\,.
$$
% \end{equation}
as required.
$\hfill\Box$
\end{proof}

\begin{corollary}
    Let $\Gamma\subseteq \f^*_q$ be a multiplicative subgroup, and $A\subseteq \G$.
    Then
    $
        \T^{\m}_k (A) \ge \frac{|A|^{2k}}{|\G|} \,.
    $
\label{cor:T_k_below_G}
\end{corollary}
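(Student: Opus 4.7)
The plan is to deduce this as an immediate consequence of Lemma \ref{l:T_k_in_subgroups}, by expanding the characteristic function of $A$ in the orthonormal basis $\{\chi_\alpha\}_{\alpha \in [|\G|]}$ and keeping just the principal coefficient.

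First I would write $A(x) = \sum_\alpha c_\alpha \chi_\alpha(x)$, where $c_\alpha = \sum_x A(x)\overline{\chi_\alpha(x)}$; this is legitimate since $A$ has support inside $\G$ and $\{\chi_\alpha\}$ is an orthonormal basis of functions supported on $\G$. By Lemma \ref{l:T_k_in_subgroups}, this gives
$$
    \T^{\m}_k (A) = |\G|^{k-1} \sum_\alpha |c_\alpha|^{2k} \,.
$$

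Next I would lower-bound the right-hand side by simply retaining the term $\alpha = 0$. Recalling from formula (\ref{f:chi_Gamma}) that $\chi_0(x) = |\G|^{-1/2} \G(x)$ and using $A \subseteq \G$, one computes
$$
    c_0 = \sum_{x} A(x) \overline{\chi_0(x)} = |\G|^{-1/2} \sum_{x \in \G} A(x) = \frac{|A|}{|\G|^{1/2}} \,,
$$
so $|c_0|^{2k} = |A|^{2k}/|\G|^{k}$. Combining with the previous identity,
$$
    \T^{\m}_k (A) \ge |\G|^{k-1} \cdot |c_0|^{2k} = |\G|^{k-1} \cdot \frac{|A|^{2k}}{|\G|^{k}} = \frac{|A|^{2k}}{|\G|} \,,
$$
as required.

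There is no real obstacle here; the entire content has already been packaged into Lemma \ref{l:T_k_in_subgroups}, and the corollary is just the remark that the $\alpha=0$ eigenfunction is (up to normalization) the indicator of $\G$ itself, whose inner product with $A$ is exactly $|A|/|\G|^{1/2}$. Discarding the remaining nonnegative terms in $\sum_\alpha |c_\alpha|^{2k}$ then yields the bound; note that one could also apply the power-mean inequality to get $\sum_\alpha |c_\alpha|^{2k} \ge |A|^k/|\G|^{k-1}$, but this is only stronger than the trivial bound when $|A|^2 \le |\G|$, while keeping just the $c_0$ term works uniformly.
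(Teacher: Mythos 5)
Your proof is correct and is exactly the intended derivation: the paper states this corollary immediately after Lemma \ref{l:T_k_in_subgroups} precisely because one expands $A=\sum_\a c_\a\chi_\a$, notes $c_0=|A|/|\G|^{1/2}$ for the trivial character $\chi_0=|\G|^{-1/2}\G$, and drops the remaining nonnegative terms. (Only your closing aside is slightly off: the power-mean bound $\T^\m_k(A)\ge|A|^k$ beats the $c_0$-term bound when $|A|^k\le|\G|$, not $|A|^2\le|\G|$ for general $k$ — but this does not affect the argument.)
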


\bigskip

%\begin{lemma}
%\label{l:T_k_for_progressions}
%    Let $k\ge 2$ be a positive integer, and
%    $P$ be an arithmetic progression $\{ 1,2, \dots, |P| \} \subseteq \f_p$ such that $|P|^k < p$,
%    $k \ll \log |P|$.
%    Then
%    $$
%        \T^\m_k (P) \le |P|^k \left( \frac{C\log |P|}{k} \right)^{k(k-1)} \,,
%    $$
%    where $C>0$ is an absolute constant.
%\end{lemma}
%\begin{proof}
%Because of $|P|^k < p$ the number of solutions $x_1 \dots x_k = x'_1 \dots x'_k$ in $\f_p$
%equals the number of integer solutions.
%Clearly,
%$$
%    \T^\m_k (P) = \sum_{x_1,\dots,x_{k} \in P} \tau_k (x_1 \dots x_k)
%        \le \prod_{j=1}^k \left( \sum_{x=1}^{|P|} \tau_k (x) \right)^k \,,
%$$
%where $\tau_k (x)$ is the number of solutions of the equation $x_1\dots x_k =x$.
%Here we have used the
%simple property
%of the function $\tau_k (x)$
%$$
%    \tau_k (x_1 \dots x_k) \le \prod_{j=1}^k \tau_k (x_j)
%$$
%which easily follows from the formula $\tau_k (p^{\a_1}_1 \dots p^{\a_s}_s) = \prod_{j=1}^s \binom{\a_j + k-1}{k-1}$
%(see, e.g. \cite{IK}).
%By assumption $k \ll \log |P|$.
%Using Dirichlet's method (see, \cite{M} or e.g. \cite{IK}), we obtain
%$$
%    \sum_{x=1}^{|P|} \tau_k (x) \le |P| \left( \frac{C \log |P|}{k} \right)^{k-1} \,,
%$$
%where $C>0$ is an absolute constant.
%This completes the proof. $\hfill\Box$
%\end{proof}

Now formulate a result on
a relation between $\T^{\m}_k (A)$ and some another characteristics of
an arbitrary subset $A$ of a multiplicative subgroup.

\begin{proposition}
    Let
    %$p$ be a prime number, and
    $\Gamma\subseteq \f^*_q$ be a multiplicative subgroup,
    and
    % $A \subseteq \G$.
    $A$ be any subset of $\G$.
    Then for an arbitrary integer $k\ge 2$, we have
    \begin{equation}\label{f:P_variance}
        |A|^2 \le  |\G|^2 (\T^{\m}_k (A))^{1/k}
            \cdot \min_h \left( \frac{\| h\|_1}{\| h \|_2} \right)^{2/k} \frac{\sum_x |h(x)|^2}{\sum_x (h \c \ov{h}) (x) (\G \c \G) (x)} \,,
    \end{equation}
    where the minimum is taken over all nonzero $\G$--invariant functions.
    In the case $k=2$, we also have
    \begin{equation}\label{f:P_variance_k=2}
        |A| \le |\G|^{1/4} (\E^{\m} (A))^{1/4}
            %\quad \mbox{ and } \quad
    \end{equation}
    and
    \begin{equation}\label{f:P_variance_k=2'}
        \E_l (A,\G) \le |\G|^{-1/2} \E_{2l-1}^{1/2} (\G) (\E^{\m} (A))^{1/2}
    \end{equation}
    for any $l\ge 2$.
\label{p:Vinogradov}
\end{proposition}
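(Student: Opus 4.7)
The three inequalities call for different arguments, sketched below.

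Inequality (\ref{f:P_variance_k=2}) is merely Corollary \ref{cor:T_k_below_G} at $k=2$: the bound $\E^{\m}(A) \ge |A|^4/|\G|$ rearranges to $|A| \le |\G|^{1/4}(\E^{\m}(A))^{1/4}$.

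For (\ref{f:P_variance_k=2'}), I would apply Proposition \ref{p:eigenfunctions_Gamma} with the $\G$--invariant weight $\psi_l(x) := (\G \c \G)^{l-1}(x)$; its Fourier transform is nonnegative, being an iterated additive convolution of $|\FF{\G}|^2$. Writing $A = \sum_\a c_\a \chi_\a$, the spectral identity $\E_l(A,\G) = \sum_x \psi_l(x)(A\c A)(x) = \sum_\a |c_\a|^2\mu_\a(\psi_l)$ is obtained as in the proof of (\ref{f:connected}). Cauchy--Schwarz then bounds this sum by $(\sum |c_\a|^4)^{1/2}(\sum \mu_\a^2)^{1/2}$, whose two factors evaluate to $\E^{\m}(A)/|\G|$ (by Lemma \ref{l:T_k_in_subgroups}) and $\sum_z \psi_l^2(z)(\G\c\G)(z) = \E_{2l-1}(\G)$ (by the trace--squared identity (\ref{F:trace_sq})), delivering (\ref{f:P_variance_k=2'}).

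The delicate case is (\ref{f:P_variance}). Start with the exact Fourier formula (\ref{f:exact_Fourier_formula}) at $\lambda = 0$, $u = A$:
$$
    |A|^2 \;\le\; |\G|^2\, \E(h,A)/\E(h,\G)
$$
for every nonzero $\G$--invariant $h$, which may be taken real nonnegative. Setting $\psi = h \c \ov{h}$, spectral decomposition gives $\E(h,A) = \sum_\a |c_\a|^2 \mu_\a$; and because $\psi \ge 0$, one has $\mu_\a \le \mu_0 = \E(h,\G)/|\G|$ via the elementary inequality $\Re(\chi_\a \c \ov{\chi_\a})(x) \le (\chi_0 \c \ov{\chi_0})(x) = |\G|^{-1}(\G\c\G)(x)$. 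The key step is H\"older with exponents $k$ and $k/(k-1)$,
$$
    \sum_\a |c_\a|^2 \mu_\a \;=\; \sum_\a \bigl(|c_\a|^2 \mu_\a^{1/k}\bigr)\mu_\a^{(k-1)/k} \;\le\; \Bigl(\sum_\a |c_\a|^{2k} \mu_\a\Bigr)^{1/k}\Bigl(\sum_\a \mu_\a\Bigr)^{(k-1)/k} \,.
$$
Bound the first factor by $\mu_0 \cdot \T^{\m}_k(A)/|\G|^{k-1} = \E(h,\G)\,\T^{\m}_k(A)/|\G|^k$ via Lemma \ref{l:T_k_in_subgroups}, and the second by $(|\G|\|h\|_2^2)^{(k-1)/k}$ via the trace formula (\ref{F:trace}) (since $\sum_\a\mu_\a = |\G|\psi(0) = |\G|\|h\|_2^2$). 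Substituting yields
$$
    |A|^2 \;\le\; |\G|^{2-1/k} (\T^{\m}_k(A))^{1/k}\, \|h\|_2^{2(k-1)/k} \E(h,\G)^{-(k-1)/k} \,.
$$
The pointwise estimate $(\G\c\G)(z) \le |\G|$ combined with $\sum_z (h\c\ov{h})(z) = |\FF{h}(0)|^2 \le \|h\|_1^2$ yields $\E(h,\G) \le |\G|\|h\|_1^2$, hence $\E(h,\G)^{1/k} \le |\G|^{1/k}\|h\|_1^{2/k}$; inserting this factor into the preceding display converts it into exactly (\ref{f:P_variance}).

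The main obstacle is bookkeeping: tracking the various Fourier normalisations so that the identities $\mu_0 = \E(h,\G)/|\G|$, $\sum_\a \mu_\a = |\G|\|h\|_2^2$, and $\sum_\a \mu_\a^2 = \E_{2l-1}(\G)$ hold with the precise constants that the target inequalities demand. Once those are in place, each of the three proofs reduces to a single H\"older or Cauchy--Schwarz step, all leveraging the fact that Proposition \ref{p:eigenfunctions_Gamma} simultaneously diagonalises every operator $\ov{\oT}^{N^{-1}\FF{\psi}}_\G$ with $\G$--invariant $\psi$ in the common basis $\{\chi_\a\}$.
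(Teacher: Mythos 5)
Your proposal is correct and follows essentially the same route as the paper: expand $A$ in the character basis $\{\chi_\a\}$, use Lemma \ref{l:T_k_in_subgroups} to identify $\sum_\a |c_\a|^{2k}$ with $\T^{\m}_k(A)/|\G|^{k-1}$, apply H\"older together with the trace identities (\ref{F:trace}), (\ref{F:trace_sq}) and the bound $\max_\a\mu_\a \le \|h\|_1^2$; your H\"older split is algebraically a rearrangement of the paper's (\ref{f:E(G,P)})--(\ref{tmp:07.11.2011_2}), and your derivation of (\ref{f:P_variance_k=2}) directly from Corollary \ref{cor:T_k_below_G} is exactly the reformulation the paper itself points out after the proof. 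The choice $g=(\G\c\G)^{l-1}$ for (\ref{f:P_variance_k=2'}) also matches the paper.
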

\begin{proof}
    Take $g (x) = (h \c \ov{h}) (x)$.
    Then $\FF{g} \ge 0$.
    Now proceed as in the proof of formula (\ref{f:connected}) from Proposition \ref{p:eigenfunctions_Gamma}.
    Let $A = \sum_{\a} c_\a \chi_\a$ and $\mu_\a = \mu_\a (\ov{\oT}^{q^{-1}\FF{g}}_\G)$.
    By H\"{o}lder, we have
    \begin{equation}\label{f:E(G,P)}
        \sum_x g (x) (A\c A)(x) = \sum_\a |c_\a|^2 \mu_\a
            \le
                \left( \sum_\a |c_\a|^{2k} \right)^{1/k} \left( \sum_\a \mu^{\frac{k}{k-1}}_\a \right)^{1-1/k} \,.
    \end{equation}
    Applying Lemma \ref{l:T_k_in_subgroups}, we get
    \begin{equation}\label{tmp:07.11.2011_1}
        \sum_\a |c_\a|^{2k} = \frac{\T^\m_k (A)}{|\G|^{k-1}} \,.
    \end{equation}
    On the other hand
    \begin{equation}\label{tmp:07.11.2011_2}
        \left( \sum_\a \mu^{\frac{k}{k-1}}_\a \right)^{1-1/k}
            \le
                \mu^{1/k}_0 \cdot ( \sum_\a \mu_\a )^{1-1/k}
                    \le
                        \| h \|_1^{2/k} \| h \|_2^{2-2/k} |\G|^{1-1/k}  \,,
    \end{equation}
    where a trivial estimate
    $$
        \mu_0 = |\G|^{-1} \sum_x g (x) (\G\c \G)(x) \le ( \sum_x |h(x)| )^2
    $$
    and
    a particular case of formula (\ref{F:trace}), namely,
    $$
        \sum_\a \mu_\a = |\G| g (0) = |\G| \| h \|_2^2
    $$
    were
    %was
    used.
    Substituting (\ref{tmp:07.11.2011_1}) and (\ref{tmp:07.11.2011_2}) into (\ref{f:E(G,P)}),
    we get
    \begin{equation}\label{tmp:08.11.2011_4}
        \frac{|A|^2}{|\G|^{2}} \sum_x g (x) (\G\c \G)(x) = c^2_0 \mu_0 \le \sum_x g (x) (A\c A)(x)
            \le
                \| h \|_2^2 (\T^{\m}_k (A))^{1/k} \left( \frac{\| h\|_1}{\| h \|_2} \right)^{2/k}
    \end{equation}
    and (\ref{f:P_variance}) is proved.

    To obtain (\ref{f:P_variance_k=2}), we just note that in the case $k=2$ the sum
    $\sum_\a \mu^{2}_\a$ from (\ref{tmp:07.11.2011_2}) can be computed.
    Indeed by formula (\ref{F:trace_sq})
    \begin{equation}\label{tmp:08.11.2011_5}
        \sum_\a \mu^{2}_\a = \sum_x |g (x)|^2 (\G \c \G) (x) = \sum_x |(h \c \ov{h}) (x)|^2 (\G \c \G) (x)
    \end{equation}
    and after using the same arguments as above, we have
    \begin{equation}\label{tmp:08.11.2011_3}
        |A|^2 \le |\G|^{3/2} (\E^{\m} (A))^{1/2}
            \cdot \min_h \frac{\left( \sum_x |(h \c \ov{h}) (x)|^2 (\G \c \G) (x) \right)^{1/2}}
                              {\sum_x (h \c \ov{h}) (x) (\G \c \G) (x)} \,.
    \end{equation}
    Optimizing the last inequality over $h$ (taking $h(x) \equiv 1$), we obtain
    %the first formula from
    (\ref{f:P_variance_k=2}).
    To get
    (\ref{f:P_variance_k=2'})
    %the second formula
    take  $g(x) = (\G \c \G)^{l-1} (x)$, use formula (\ref{tmp:08.11.2011_5})
    and repeat  the arguments from (\ref{f:E(G,P)}), (\ref{tmp:08.11.2011_5}).
    %(\ref{tmp:08.11.2011_4}),
    After some computations, we have
    $$
        \sum_x g (x) (A\c A)(x) = \E_l (A,\G) \le |\G|^{-1/2} \E^{1/2}_{2l-1} (\G) (\E^{\m} (A))^{1/2}
    $$
%    and the proposition is proved.
    as required.
    This completes the proof of the proposition.
$\hfill\Box$
\end{proof}

%For an arbitrary subgroup we cannot effectively use Theorem \ref{t:Vinogradov}
%because we do not know additive energy of such subgroups.
%Moreover, as was showed in \cite{K_Tula} the energy of any subgroup $\G$ of size $O(p^{2/3})$ is $O(|\G|^{5/2})$.
%Nevertheless, we use a little bit another technique to obtain a similar result in the direction.

%For $k=2$

Note that formula (\ref{f:P_variance_k=2}) is just reformulation of Lemma \ref{l:T_k_in_subgroups}.
Formulas (\ref{f:P_variance})--(\ref{f:P_variance_k=2'}) give an explanation
why $\G$ is a eigenfunction of operator $\oT^{\FF{g}}_\G$.
The thing is $\T^\m_k (\G)$ is maximal over all subsets of a multiplicative subgroup.

%Actually, in the proof of our main Theorem \ref{t:Vinogradov_real} we do not need the case $k=2$.
%Nevertheless, inequalities  (\ref{f:P_variance_k=2}), (\ref{f:P_variance_k=2'})
%give an interesting corollary.

Below we will deal with
%the case $\Gr = \f_p$,
the field $\f_p$,
where $p$ is a prime number.
There are plenty results about the quantity $\T^\m_k$ for arithmetic progressions in $\f_p$.

\begin{theorem}
\label{t:CSZ}
    $1)~$ Let $P\subseteq \f^*_p$ be an arithmetic progression.
    Then \cite{CSZ}
    $$
        \T_2^\m (P) = \frac{|P|^4}{p} + O(|P|^{2+o(1)}) \,.
    $$
    $2)~$ If $|P| \ll p^{1/8}$ then \cite{CG} the number of solutions of the congruence
    $$
        xyz \equiv \lambda \pmod p\,,  \quad \lambda \neq 0\,, \quad x,y,z \in P
    $$
    does not exceed $|P|^{o(1)}$ (uniformly over $\lambda$).\\
    $3)~$ If $\nu$ is a positive integer,
            $|P| \ll p^{c(\nu)}$, where $c(\nu)>0$ is some constant depends on $\nu$ only.
            Then \cite{BKS_oracle} the number of solutions of the congruence
    $$
        x_1 \dots x_\nu \equiv \lambda \pmod p\,,  \quad \lambda \neq 0\,, \quad x_1 \dots x_\nu \in P
    $$
    is bounded by $$ \exp\l( c'(\nu) \frac{\log |P|}{\log \log |P|} \r) \,,$$
    where $c'(\nu) > 0$ depends on $\nu$ only.
\end{theorem}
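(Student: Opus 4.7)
The plan is to address the three parts independently, since each is attributed to a different source and uses a different toolkit.

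For part (1), my approach is to expand the multiplicative energy of the arithmetic progression in multiplicative characters:
$$
\T_2^\m (P) = \frac{1}{p-1} \sum_{\chi \bmod p} \Bigl|\sum_{x\in P} \chi(x)\Bigr|^4.
$$
The principal character contributes $|P|^4/(p-1)$, which matches the main term. For every nontrivial $\chi$, I would apply the Polya--Vinogradov inequality (or the sharper Burgess bound, if one wants to extend the range of $|P|$) to bound the inner sum by $p^{1/2+o(1)}$, then use Parseval $\sum_\chi |\sum_{x\in P}\chi(x)|^2 \le (p-1)|P|$ to convert the fourth moment into a second moment times the supremum. A little care in separating trivial from nontrivial characters extracts the desired error term $|P|^{2+o(1)}$.

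For part (2), with $|P|\ll p^{1/8}$, the natural tool is Stepanov's polynomial method, following Cilleruelo and Garaev. The idea is to write $P=\{a+jd: 0 \le j < N\}$ and reduce the congruence $xyz\equiv\lambda$ to an equation among progressions, then construct an auxiliary low-degree polynomial that vanishes to high multiplicity at each putative solution. The constraint $|P|\ll p^{1/8}$ is exactly what is needed to guarantee non-triviality of this polynomial; a degree-versus-multiplicity comparison then forces the solution count to be $|P|^{o(1)}$.

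For part (3), I would aim to iterate the Stepanov construction $\nu$ times, tracking the combinatorial blow-up of degree and multiplicity at each step — alternatively, a sum--product type argument applied to the progression viewed inside the interval $[1,p]$ might be easier to push through. In either route, each step contributes a multiplicative factor roughly of size $\exp(O(\log |P|/\log\log |P|))$, which compounds into the claimed bound with a $\nu$-dependent constant $c'(\nu)$, and the admissible size $|P|\ll p^{c(\nu)}$ shrinks with $\nu$ since the Stepanov auxiliary polynomial has less and less room.

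The main obstacle I expect is part (3): balancing the Stepanov degrees across $\nu$ variables while keeping the multiplicity large enough to extract the $(\log\log)^{-1}$ saving is delicate, and optimizing the constants $c(\nu)$ and $c'(\nu)$ requires careful combinatorial bookkeeping. Parts (1) and (2), by contrast, follow fairly directly from the standard toolkit of character sums and Stepanov's original method.
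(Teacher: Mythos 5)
The paper offers no proof of Theorem \ref{t:CSZ}: all three parts are quoted verbatim from the cited references \cite{CSZ}, \cite{CG}, \cite{BKS_oracle}, so there is no internal argument to compare against. Judged on its own terms, your sketch has a genuine quantitative failure in part (1). The character expansion $\T^\m_2(P)=\frac{1}{p-1}\sum_{\chi}\bigl|\sum_{x\in P}\chi(x)\bigr|^4$ is a correct identity, but the step ``supremum times Parseval'' gives for the non-principal part the bound
$$
\max_{\chi\neq\chi_0}\Bigl|\sum_{x\in P}\chi(x)\Bigr|^{2}\cdot\frac{1}{p-1}\sum_{\chi}\Bigl|\sum_{x\in P}\chi(x)\Bigr|^{2}\;=\;\max_{\chi\neq\chi_0}\Bigl|\sum_{x\in P}\chi(x)\Bigr|^{2}\cdot|P|\,,
$$
so to reach the error term $|P|^{2+o(1)}$ you would need square--root cancellation $\max_{\chi\neq\chi_0}|\sum_{x\in P}\chi(x)|\ll|P|^{1/2+o(1)}$, which is far beyond P\'olya--Vinogradov or Burgess for progressions of length $|P|\ll p^{1/2}$; P\'olya--Vinogradov only yields an error $O(p^{1+o(1)}|P|)$, which is worse than the trivial bound $|P|^3$ once $|P|\ll p^{1/2}$. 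The approach is also essentially circular, since the fourth moment of these character sums \emph{is} $\T^\m_2(P)$. The actual proof in \cite{CSZ} counts solutions of $x_1x_2\equiv x_3x_4 \pmod p$ directly, lifting to an integer equation and using concentration estimates for points on modular hyperbolas together with the divisor bound $\tau(n)=n^{o(1)}$ -- that is the sole source of the $|P|^{o(1)}$ factors.

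Parts (2) and (3) rest on a misidentified method. Stepanov's polynomial method (as used in Lemma \ref{l:improved_Konyagin_old} of this paper) crucially exploits the multiplicative invariance of a subgroup $\G$; an arithmetic progression is not multiplicatively invariant, so the auxiliary-polynomial construction and the constraint $|P|\ll p^{1/8}$ do not arise that way. Cilleruelo and Garaev instead use a geometry-of-numbers argument (successive minima of a low-rank lattice attached to the hyperbola $xy\equiv\lambda$) combined with the divisor bound, and \cite{BKS_oracle} extends this with further lattice and divisor-type arguments rather than an iteration of Stepanov. Indeed, the shape $\exp\bigl(c'(\nu)\log|P|/\log\log|P|\bigr)$ of the bound in part (3) is exactly the maximal order of the divisor function, which tells you where the saving actually comes from; an iterated ``Stepanov'' scheme as you describe would not produce it.
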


\begin{corollary}
\label{cor:E_3_P}
    Let $\Gamma\subseteq \f^*_q$ be a nontrivial multiplicative subgroup.
    Then for any progression $P\subseteq \G$ the following holds
    \begin{equation}\label{f:P_size_weak}
        |P| \ll |\G|^{1/2+o(1)} \,,
    \end{equation}
    Suppose that $|\G| \ll p^{2/3}$ and $l\ge 3$.
    Then
    \begin{equation}\label{f:d(G),k=2_P+G}
        \E(P,\G) \ll |P|^{1+o(1)} |\G| \log^{1/2} |\G|
            \quad \mbox{ and } \quad
            \E_l (P,\G) \ll |P|^{1+o(1)} |\G|^{l-1} \,.
    \end{equation}
    %as well as
    %If $|\G| \gg p^{3/4}$ then
    %%If $|\G| \ll p^{2/3}$ then the same holds for any homogenies progression $P$.
\end{corollary}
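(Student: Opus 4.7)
The plan is to derive both parts from Proposition \ref{p:Vinogradov} together with the bound on the multiplicative energy of an arithmetic progression provided by part 1) of Theorem \ref{t:CSZ}, namely $\E^{\m}(P) = \T^{\m}_2(P) = |P|^4/p + O(|P|^{2+o(1)})$.

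For the size bound (\ref{f:P_size_weak}), I would apply (\ref{f:P_variance_k=2}) with $A = P$, which yields $|P|^4 \le |\G|\,\E^{\m}(P)$. Substituting Theorem \ref{t:CSZ}(1) gives $|P|^4(1 - |\G|/p) \ll |\G| \cdot |P|^{2+o(1)}$. Since $\G$ is a nontrivial (i.e., proper) multiplicative subgroup of $\F_p^*$, we have $|\G| \le (p-1)/2$, so the prefactor $1 - |\G|/p$ is bounded away from $0$; dividing through then yields $|P|^{2-o(1)} \ll |\G|$, i.e., $|P| \ll |\G|^{1/2+o(1)}$.

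For the bounds (\ref{f:d(G),k=2_P+G}), the hypothesis $|\G| \ll p^{2/3}$ combined with the just-proved (\ref{f:P_size_weak}) gives $|P| \ll p^{1/3+o(1)}$, whence $|P|^4/p \ll |P|^{2+o(1)}$, and therefore $\E^{\m}(P) \ll |P|^{2+o(1)}$. I would then invoke (\ref{f:P_variance_k=2'}) to get $\E_l(P,\G) \ll |\G|^{-1/2}\,\E^{1/2}_{2l-1}(\G) \cdot |P|^{1+o(1)}$. For $l = 2$, substitute $\E_3(\G) \ll |\G|^3 \log|\G|$ from Corollary \ref{cor:E_l}, obtaining $\E(P,\G) \ll |\G|\,|P|^{1+o(1)}\log^{1/2}|\G|$. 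For $l \ge 3$, one has $2l-1 \ge 5$, which lies in the range where Corollary \ref{cor:E_l} gives $\E_{2l-1}(\G) \ll |\G|^{2l-1}$ (the main term dominates the error $|\G|^{(4l+1)/3}$ as soon as $l \ge 2$), producing $\E_l(P,\G) \ll |\G|^{l-1}|P|^{1+o(1)}$.

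The only subtle point is verifying that the error term $|P|^{2+o(1)}$ dominates over $|P|^4/p$ in $\E^{\m}(P)$ for the second assertion; this is precisely what the hypothesis $|\G| \ll p^{2/3}$ buys us via the first assertion. Everything else is routine substitution into identities already proved earlier in the paper, so no significant obstacle is expected.
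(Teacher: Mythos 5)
Your proposal is correct and follows essentially the same route as the paper: both derive (\ref{f:P_size_weak}) by combining (\ref{f:P_variance_k=2}) with Theorem \ref{t:CSZ}(1) (the paper phrases the treatment of the $|P|^4/p$ term as a dichotomy/contradiction, you rearrange directly, which is the same estimate), and both obtain (\ref{f:d(G),k=2_P+G}) by feeding $\E^{\m}(P)\ll |P|^{2+o(1)}$ into (\ref{f:P_variance_k=2'}) together with the bounds of Corollary \ref{cor:E_l}. No gaps.
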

\begin{proof}
Suppose that $P \subseteq \G$ is an arbitrary progression.
%We can assume that $P
By Theorem \ref{t:CSZ}, we have
\begin{equation}\label{tmp:08.08.2012_1}
    \E^\m (P) = \frac{|P|^4}{p} + O(|P|^{2+o(1)}) \,.
\end{equation}
If the first term is dominated then applying  (\ref{f:P_variance_k=2}), we get
$$
    |P| \le \frac{2^{1/4}|P|}{p^{1/4}} |\G|^{1/4}
$$
with contradiction.
Thus the second term in (\ref{tmp:08.08.2012_1})
is dominated and using (\ref{f:P_variance_k=2}), we obtain (\ref{f:P_size_weak}).
Applying Theorem \ref{t:CSZ} once again, formula
(\ref{f:P_variance_k=2'})
%with $l=2$
and Corollary \ref{cor:E_l}, we get (\ref{f:d(G),k=2_P+G}).
This completes the proof.
%Lower bound (\ref{f:d(G),k=2_P+G'}) follows from (\ref{f:d(G),k=2_P+G}) by Cauchy--Schwartz.
%
%
%
%As before
%\begin{equation}\label{f:once_more_large}
%    \frac{|P|^2 \E(\G)}{|\G|^2} \le \frac{|P|^{1+o(1)} \E^{1/2}_3 (\G)}{|\G|^{1/2}} \,.
%\end{equation}
%Clearly, $\E(\G) \ge |\G|^4 / p$ and by formulas (\ref{F_Par}), (\ref{f:F_svertka})
%as well as $\max_{\xi \neq 0} |\F{\G} (\xi)| < \sqrt{p}$ (see, e.g. \cite{KS1,K_Tula}), we have
%$$
%    \E_3 (\G) = \frac{1}{p^2} \sum_{x,y} |\F{\G}(x)|^2 |\F{\G}(y)|^2 |\F{\G}(x-y)|^2
%        \le
%            \frac{|\G|^6}{p^2} + p|\G|^2
%                \ll
%                    \frac{|\G|^6}{p^2}
%$$
%and $|P| \le |\G|^{1/2+o(1)}$ follows.
$\hfill\Box$
\end{proof}

%\bigskip

%%Formula (\ref{f:d(G),k=2}) of Corollary \ref{cor:E_3_P} refines a little bit some results from \cite{BKS_oracle}.

Clearly, the condition $|\G| \ll p^{2/3}$ can be relaxed for large $l$.
Obviously,
%formula
inequality
(\ref{f:d(G),k=2_P+G}) is the best possible up to $|P|^{o(1)}$ factor.

\begin{remark}
    %It is possible to
    %%Certainly, one can
    %obtain analog of (\ref{f:n_for_large}) for smaller subgroups.
    %Also, our arguments give an asymptotic formula
    The arguments from the proof of Proposition \ref{p:Vinogradov} give
    (we consider the simplest case $l=2$)
    the following asymptotic formula
    $$
        \E (P,\G) = \sum_{x} (\G \c \G) (x) (P\c P) (x)
            =
                \frac{|P|^2 \E(\G)}{|\G|^2} + \theta |P|^{1+o(1)} |\G|^{-1/2} (\E^*_3 (\G))^{1/2} \,,
    $$
    where $|\theta| \le 1$ and $\E^*_3 (\G) = \sum_{\a\neq 0} \mu^2_\a$.
    Here $P \subseteq \G$ is an arithmetic
    %homogenies
    progression.
    The asymptotic formula works just for large subgroups
    of size $p^{1-\delta}$, $\delta>0$.
    %%such as quadratic residuals.
    %In the latter case, we have
    %$$
    %    \E (P,\G) = \frac{|P|^2 |\G|^2}{p} + \theta' |P|^{1+o(1)} |\G| \,,
    %$$
    %where  $|\theta'| \le 1$.
\end{remark}

\begin{remark}
    Certainly, inequality
    \begin{equation}\label{f:d(G),k=2_P+G'}
        |P+\G| \gg |\G| |P|^{1-o(1)} \log^{-1/2} |\G|
    \end{equation}
    follows from (\ref{f:d(G),k=2_P+G}) by Cauchy--Schwartz
    %Of course
    and
    one can obtain analog of formula
    %(\ref{f:d(G),k=2_P+G}) and
    (\ref{f:d(G),k=2_P+G'})
    %using inequality (\ref{f:P_variance_k=2'}) of Proposition \ref{p:Vinogradov} with parameter
    for $l$ larger  than two, namely,
    %For example, (\ref{f:d(G),k=2_P+G'}) will be
    $|\G^{l-1} + \Delta_{l-1} (P)| \gg |P|^{1-o(1)} |\G|^{l-1}$.
    Nevertheless in the case $l>2$
    a more exact and general bound
    was obtained in \cite{ss_E_k} (see Corollary 39, the case $k\ge 2$), namely,
    \begin{equation}\label{f:expanding_known}
        |\G^{2} + \Delta_{2} (A)| \gg |A| |\G|^{2} \log^{-1} |\G|
            \quad \mbox{ and } \quad |\G^{l-1} + \Delta_{l-1} (A)| \gg |A| |\G|^{l-1} \,, ~~~ l>3
    \end{equation}
    for {\it any} $A\subseteq \G$.
%    If $l=2$ and $|\G| \ll \sqrt{p}$ then the role of (\ref{f:expanding_known})
%    plays Corollary \ref{cor:expanding_strong}.
\end{remark}

Finally, for the sake of completeness and because of it is difficult to find
%it
in the literature,
we add a very simple result on progressions in small subgroups.

\begin{proposition}
\label{st:Vinogradov_real}
    Let $p$ be a prime number, $\delta \in (0,1)$ is a real number.
    Suppose that $\Gamma\subseteq \f^*_p$ is a multiplicative subgroup, $|\G| = p^{1-\d}$,
    and $P = \{ a,2a,\dots, sa \} \subseteq \G$, $a\neq 0$.
    Then there is an absolute constant $C>0$ such that for all $p\ge p_0(\d)$, we have
    \begin{equation}\label{f:Vinogradov_real}
        |P| \le \exp( C \sqrt{\d^{-1} \log (1/\d) \log p} ) \,.
    \end{equation}
    Moreover for any
    %homogenies
    such
    arithmetic progression $P$,
    $\log |P| \gg \sqrt{\d^{-1} \log (1/\d) \log p}$
    %and any group $\G$ character $\chi_\a$
    the following holds
    \begin{equation}\label{f:Vinogradov_real_sum}
        |P\bigcap \G| \le |P|^{1-\d/4} \,.
    \end{equation}
\end{proposition}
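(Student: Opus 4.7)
By the remark that $P = \{a, 2a, \dots, sa\}\subseteq \G$ forces $s\in\G$ and hence, after multiplying by $a^{-1}\in \G$, every integer $\{1,2,\dots,s\}$ lies in $\G$, I may assume $a = 1$ so that $P\subseteq\G$. The strategy is to sandwich the multiplicative energy $\T^{\m}_k(P)$ between the lower bound supplied by the eigenvalue method (Corollary \ref{cor:T_k_below_G}) and the upper bound supplied by Theorem \ref{t:CSZ}(3), then optimize the integer parameter $k$ in terms of $\d$ and $\log p$.

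\medskip

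\noindent
The lower bound is immediate: since $P\subseteq \G$, Corollary \ref{cor:T_k_below_G} gives, for every integer $k\ge 2$,
\[
\T^{\m}_k(P)\ge \frac{|P|^{2k}}{|\G|} = |P|^{2k} p^{\d-1}.
\]
For the upper bound, I will choose $k$ to be roughly $k_0 := \lceil c_1\sqrt{\d\log p/\log(1/\d)}\,\rceil$ for an absolute constant $c_1>0$ to be fixed. Assuming for the moment that the target size $|P|$ satisfies the hypothesis $|P|\ll p^{c(k_0)}$ of Theorem \ref{t:CSZ}(3), every non-zero value of a $k$-fold product over $P$ is attained at most $\exp(c'(k_0)\log|P|/\log\log|P|)$ times. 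Since $\sum_{\lambda\ne 0}r_k(P,\lambda) = |P|^k$ and $P\subseteq \F_p^*$, one gets
\[
\T^{\m}_k(P) \;=\; \sum_{\lambda\ne 0} r_k(P,\lambda)^2 \;\le\; |P|^{k}\cdot \exp\!\Bigl( c'(k_0)\,\frac{\log|P|}{\log\log|P|}\Bigr).
\]
Combining the two estimates and taking logarithms,
\[
k_0\log|P| \;\le\; (1-\d)\log p + c'(k_0)\,\frac{\log|P|}{\log\log|P|}.
\]

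\medskip

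\noindent
The choice $k_0\asymp \sqrt{\d\log p/\log(1/\d)}$ is made precisely so that the second term on the right is bounded by $\tfrac12 k_0\log|P|$ once $|P|$ is larger than the claimed bound in (\ref{f:Vinogradov_real}). Absorbing it into the left-hand side yields $\log|P| \ll (1-\d)\log p/k_0$, which rearranges to
\[
\log|P| \;\le\; C\sqrt{\d^{-1}\log(1/\d)\log p},
\]
proving (\ref{f:Vinogradov_real}). For part (\ref{f:Vinogradov_real_sum}), I set $A := P\cap\G$ and apply Corollary \ref{cor:T_k_below_G} to $A$ to get $\T^{\m}_k(A)\ge |A|^{2k}/|\G|$, while the upper bound from Theorem \ref{t:CSZ}(3) for $\T^{\m}_k(P)$ (which dominates $\T^{\m}_k(A)$ since $A\subseteq P$) still applies. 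Under the hypothesis $\log|P|\gg \sqrt{\d^{-1}\log(1/\d)\log p}$, substituting and dividing gives $|A|^{2k_0}\le |P|^{(2-\d/2)k_0}$, hence $|A|\le |P|^{1-\d/4}$.

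\medskip

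\noindent\textbf{The main obstacle} is the interlocking calibration of $k_0$: it must be large enough that $(1-\d)\log p/k_0$ already forces the target size on $|P|$, but small enough that both the hypothesis $|P|\ll p^{c(k_0)}$ of Theorem \ref{t:CSZ}(3) holds and the error term $c'(k_0)\log|P|/\log\log|P|$ is absorbed into the main term. The value $k_0\asymp \sqrt{\d\log p/\log(1/\d)}$ is essentially the unique scale at which these three constraints are simultaneously met, so the logarithmic factor $\log(1/\d)$ in (\ref{f:Vinogradov_real}) reflects exactly the cost of controlling $c'(k_0)$ in Theorem \ref{t:CSZ}(3). Everything else reduces to routine manipulation of the logarithmic inequality above.
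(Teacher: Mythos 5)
Your overall skeleton — sandwich $\T^{\m}_k(P)$ between the eigenvalue lower bound $|P|^{2k}/|\G|$ from Corollary~\ref{cor:T_k_below_G} and an upper bound on $\T^{\m}_k$ of a long arithmetic progression, then choose $k\asymp\sqrt{\d\log p/\log(1/\d)}$ and take logarithms — is exactly the paper's. The reduction to $a=1$ via $a\in\G\Rightarrow a^{-1}\in\G$ is also fine (equivalently, $\T^{\m}_k$ is invariant under dilation). But the upper bound you use is not the paper's and does not actually close the argument.

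The gap is your appeal to Theorem~\ref{t:CSZ}(3). That theorem is stated for a \emph{fixed} $\nu$, with constants $c(\nu)>0$ and $c'(\nu)>0$ that "depend on $\nu$ only" but whose rate of decay/growth in $\nu$ is not quantified. Your argument forces $\nu=k_0\asymp\sqrt{\d\log p/\log(1/\d)}\to\infty$ as $p\to\infty$. To absorb the error term you need $c'(k_0)\log|P|/\log\log|P|\le\tfrac12 k_0\log|P|$, i.e.\ $c'(k_0)\lesssim k_0\log\log p$; and you also need the hypothesis $|P|\ll p^{c(k_0)}$ to hold with $\log|P|\asymp\sqrt{\d^{-1}\log(1/\d)\log p}$, i.e.\ $c(k_0)\gtrsim \sqrt{\log(1/\d)/(\d\log p)}$. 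Neither of these is available from the black-box statement: if, say, $c'(\nu)$ grows exponentially or $c(\nu)$ decays exponentially in $\nu$ (which is entirely consistent with Theorem~\ref{t:CSZ}(3) as stated, and indeed with what is known from \cite{BKS_oracle}), the inequalities fail for the relevant $k_0$. You acknowledge this as "the main obstacle" but do not resolve it; calibrating $k_0$ cannot help when the $\nu$-dependence of the constants is unknown.

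The paper avoids this entirely by replacing Theorem~\ref{t:CSZ}(3) with the explicit, elementary Dirichlet-method bound (\ref{f:T_k_for_progressions}),
\[
  \T^{\m}_k(P)\le |P|^k\Bigl(\frac{C\log|P|}{k}\Bigr)^{k(k-1)},
\]
valid for every $k$ with an absolute constant $C$. This is uniform in $k$, so one may let $k$ grow; combining with $\T^{\m}_k(P)\ge s^{2k}/|\G|$ and taking logarithms gives $\log s\le k\log(Ck^{-1}\log s)+k^{-1}\log|\G|$, and setting $k$ to be the least integer $\ge\log p/\log s$ (so that $k\ll\log s$) reduces the claim to the elementary inequality $x\ll\d^{-1}\log Cx$ for $x=\log^2 s/\log p$. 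The second assertion (\ref{f:Vinogradov_real_sum}) then follows by the same computation with $A=P\cap\G$ on the left (via $\T^{\m}_k(A)\ge|A|^{2k}/|\G|$) and $P$ on the right (via $A\subseteq P$), as you correctly outline. To repair your proof, replace the use of Theorem~\ref{t:CSZ}(3) by a $k$-uniform upper bound of this type; the rest of your write-up then goes through.
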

\begin{proof}
Suppose for a moment that $P = \{ 1,2,\dots,s \} \subseteq \G$.
If $\log |P| \ll \sqrt{\d^{-1} \log (1/\d) \log p}$ then it is nothing to prove.
On the other hand we can take $s\ge 1$ as small as we want.
Thus suppose that $\log s \sim \sqrt{\d^{-1} \log (1/\d) \log p}$.

Because of we take $p\ge p_0(\d)$ sufficiently large
we can choose
%maximal
minimal
$k\ge 2$ such that
%$k < \log p / \log s$.
$k \ge \log p / \log s$.
%It is easy to
One can quickly
check that $k\ll \log s$.
Using Dirichlet's method (see \cite{IK}) it is easy to prove
\begin{equation}\label{f:T_k_for_progressions}
        \T^\m_k (P) \le |P|^k \left( \frac{C\log |P|}{k} \right)^{k(k-1)} \,,
\end{equation}
where $C>0$ is an absolute constant.
By Corollary \ref{cor:T_k_below_G} and formula (\ref{f:T_k_for_progressions}), we have
%for some constant $C>0$ that
$$
    \frac{s^{2k}}{|\G|} \le \T^{\m}_k (P) \le s^k \left( \frac{C\log s}{k} \right)^{k^2} \,.
$$
In other words
$$
    \log s \le k \log (C k^{-1} \log s) + k^{-1} \log |\G| \,.
$$
Hence
$$
    \d \log s \ll k \log (C k^{-1} \log s) \ll \frac{\log p}{\log s} \cdot \log (C \log^2 s \cdot \log^{-1} p) \,.
$$
Put $x=\log^2 s \cdot \log^{-1} p$.
Then the last inequality can be rewritten as $x \ll \d^{-1} \log Cx$.
In other words $x\ll \d^{-1} \log (1/\d)$ and we have formula (\ref{f:Vinogradov_real}) because of our
method equally works for progressions of the form  $\{ a,2a,\dots, sa \}$
as well.
%$n_\G (p)$ and $n^*_\G (p)$.

Thanks to Lemma \ref{l:T_k_in_subgroups} we can obtain estimate
(\ref{f:Vinogradov_real_sum}) using similar arguments as above.
Indeed, let $A=P\cap \G$, and suppose that  $|A| \ge s^{1-\d/4}$.
Here $P$ as before, $|P| = s$.
Thus $\T^\m_k (A) \ge |A|^{2k}/|\G|$ and we obtain
$$
    \log |A|
        \le
            \frac{1}{2} \log s
                +
                    \frac{\log s}{2\log p} \log |\G|
                        +
                            \frac{\log p}{2\log s} \log \left( \frac{C\log^2 s}{\log p} \right)
                                +
                                    \frac{\log^2 s}{\log^2 p} \log |\G|  \,.
$$
Hence by $|A| \ge s^{1-\d/4}$ and $|\G| = p^{1-\d}$, we have
$$
    \frac{\d}{4} \log s
        \le
            \frac{\log p}{2\log s} \log \left( \frac{C\log^2 s}{\log p} \right)
                +
                    \frac{\log^2 s}{\log^2 p} \log |\G|
                        \ll
                            \frac{\log p}{\log s} \log \left( \frac{C'\log^2 s}{\log p} \right) \,,
$$
where $C'>0$ is another absolute constant.
In other words $x \ll \d^{-1} \log C'x$ as above.
%the result follows.
This completes the proof.
\end{proof}
$\hfill\Box$

Thus, the statement above is nontrivial if $|\G| \ll p/(\log p)^{C_1}$,
where $C_1>0$ is
%some
%any
a sufficiently large
constant.
%If $|\G| \ll \sqrt{p}$ then we obtain better bounds for $d^*_{\G} (p)$, see Corollary \ref{cor:progression_cor} below.
%For subgroups of size slightly large than $\sqrt{p}$ one can apply Corollary \ref{cor:main_many_shifts'}.
Using Theorem  \ref{t:CSZ} one can obtain a similar result for arithmetic progressions of general form.

Further results on arithmetic progressions in subgroups can be found in \cite{BKS_oracle}.

\section{Applications : general sets}
\label{sec:applications2}

Now we find applications of Proposition \ref{p:triangles_in_psi} to
some further families of sets.
Let us begin with the convex subsets of $\R$.

\begin{theorem}
    Let $A \subseteq \R$ be a convex set.
    Then
    \begin{equation}\label{f:convex_energy}
        \E (A) \ll |A|^{\frac{89}{36}} \log^{\frac{1}{2}} |A| \,.
    \end{equation}
\label{t:convex_energy}
\end{theorem}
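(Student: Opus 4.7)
The plan is to mimic the proof of Theorem \ref{t:subgroup_energy}, substituting convex-set inputs wherever the subgroup argument used the Stepanov-type Lemma \ref{l:improved_Konyagin_old} or the exact eigenvalue computation from Proposition \ref{p:eigenfunctions_Gamma}. Writing $\E(A)=|A|^3/K$, I aim to show $K \gg |A|^{19/36} \log^{-1/2}|A|$, noting that the conclusion is trivial when $K$ exceeds this target.

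First I apply Proposition \ref{p:triangles_in_psi} to the symmetric function $\psi(x)=(A\c A)(x)$, whose Fourier transform $|\FF{A}(\xi)|^2$ is nonnegative, obtaining
$$\Sigma := \sum_{x,y,z\in A} \psi(x-y)\psi(x-z)\psi(y-z) \;\ge\; \E(A)^3/|A|^3.$$
Rewriting $\Sigma=\sum_{\alpha,\beta}\psi(\alpha)\psi(\beta)\psi(\alpha-\beta)\,\Cf_3(A)(\alpha,\beta)$ and using Lemma \ref{l:E_3_convex}, the degenerate contributions ($\alpha=0$, $\beta=0$, or $\alpha=\beta$) total $O(|A|\E_3(A))=O(|A|^4\log|A|)$, which is negligible in the relevant regime. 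Restricting further to $\alpha,\beta,\alpha-\beta$ with $\psi\gg d:=c\,\E(A)/|A|^2$ and applying Cauchy--Schwarz against the identity $\sum_{\alpha,\beta}\Cf_3(A)(\alpha,\beta)^2=\E_3(A)$ yields
$$\sum_{\alpha,\beta\,\mathrm{restricted}} \psi^2(\alpha)\psi^2(\beta)\psi^2(\alpha-\beta) \;\gg\; \frac{\E(A)^6}{|A|^6\,\E_3(A)} \;\gg\; \frac{\E(A)^6}{|A|^9\log|A|}.$$

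Decomposing dyadically via $S_i=\{x\neq 0\,:\,2^{i-1}d<\psi(x)\le 2^i d\}$ reduces matters to the dyadic inequality
$$d^6 \sum_{i,j,k} 2^{2(i+j+k)}\,\Omega_{ijk} \;\gg\; \frac{\E(A)^6}{|A|^9\log|A|}, \qquad \Omega_{ijk}:=\sum_\alpha S_i(\alpha)(S_j*S_k)(\alpha).$$
The sizes $|S_i|$ are governed by the arrangement estimate from Theorem \ref{t:convex_ikrt} (with $k=2$) transferred to $\psi=A\c A$, giving $|S_i|\ll |A|^3/(2^{3i}d^3)$ up to logarithmic factors from Lemma \ref{l:E_3_convex}. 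For $\Omega_{ijk}$ the strategy is to combine Cauchy--Schwarz with the Solymosi-type bound $\E(A,B)\ll|A||B|^{3/2}$ of Lemma \ref{l:E_3_convex}, rewriting $\Omega_{ijk}$ so that the convex set $A$ itself appears as one of the factors in an additive energy.

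The main obstacle is precisely this trilinear convolution estimate. The subgroup case enjoys the symmetric exponent $2/3$ on each of $|S_i|,|S_j|,|S_k|$ coming from Lemma \ref{l:improved_Konyagin_old}, whereas the convex Solymosi input distributes the factors less symmetrically, so a direct two-variable Cauchy--Schwarz falls short. A careful balance --- for instance, one Cauchy--Schwarz to pass from $\Omega_{ijk}$ to $|S_i|^{1/2}\,\E(S_j,S_k)^{1/2}$, followed by a Solymosi-type estimate exploiting that $S_j,S_k\subseteq A-A$ sit inside the convex reservoir --- should deliver a bound of the approximate shape $\Omega_{ijk}\ll |A|^{-\kappa}(|S_i||S_j||S_k|)^{2/3}\log^{O(1)}|A|$. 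Substituting this together with the level-set bound into the dyadic inequality, verifying the analogues of the admissibility conditions (\ref{cond:S_ijk_1})--(\ref{cond:S_ijk_2}), and solving for $K$ should yield $K\gg |A|^{19/36}\log^{-1/2}|A|$, i.e.\ the claimed $\E(A)\ll |A|^{89/36}\log^{1/2}|A|$. The slight losses relative to Theorem \ref{t:subgroup_energy} ($89/36$ vs.\ $88/36=22/9$ in the exponent, and $\log^{1/2}$ vs.\ $\log$) reflect exactly the mild asymmetry between the Stepanov and Solymosi inputs.
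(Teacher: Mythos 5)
Your setup is right: Proposition \ref{p:triangles_in_psi} with $\psi = A\circ A$, removal of the degenerate diagonal, Cauchy--Schwarz against $\sum_{\alpha,\beta}\Cf_3(A)^2(\alpha,\beta)=\E_3(A)$, and the dyadic decomposition into level sets $S_i$ with $|S_i|\ll |A|^3/(d^3 2^{3i})$ from Theorem \ref{t:convex_ikrt}. (A small slip: for the restriction to $\psi\gg d$ to leave the quadratic sum $\sum\psi^2\psi^2\psi^2$ essentially intact one needs $d\sim \E^2|A|^{-3}\E_3^{-1/2}$, not $d\sim \E/|A|^2$; this is what the paper takes, and the distinction propagates into the final exponents.)

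The genuine gap is exactly where you flag the ``main obstacle'': your proposed trilinear estimate does not close. Passing to $\Omega_{ijk}\le |S_i|^{1/2}\E(S_j,S_k)^{1/2}$ and then invoking a Solymosi-type bound on $\E(S_j,S_k)$ fails because the Solymosi input (Lemma \ref{l:E_3_convex}) requires \emph{the convex set itself} as one of the two arguments; $S_j,S_k$ are level sets of $A\circ A$ inside $A-A$, which is not convex, so no $\E(S_j,S_k)\ll |S_j||S_k|^{3/2}$ type bound is available. If instead you fall back on the trivial $\E(S_j,S_k)\le|S_j|^{3/2}|S_k|^{3/2}$, the dyadic computation only returns $\E(A)\ll |A|^{5/2}\log^{2/3}|A|$, which is no better than Theorem \ref{t:ikrt_energy}. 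The paper's resolution is a different Cauchy--Schwarz: first bound $S_i(\alpha)\le d^{-1}2^{1-i}(A\circ A)(\alpha)$ pointwise (valid precisely because $\alpha\in S_i$ forces $\psi(\alpha)>2^{i-1}d$), turning $\Omega_{ijk}$ into $d^{-1}2^{-i}\sum_\alpha (A\circ A)(\alpha)(S_j*S_k)(\alpha)$, and then apply Cauchy--Schwarz on the Fourier side (equivalently, the tensor trick) to split this into $\E^{1/2}(S_j,A)\,\E^{1/2}(S_k,A)$. Now each factor has the convex set $A$ in it and Lemma \ref{l:E_3_convex} gives $\E(S_j,A)\ll |A||S_j|^{3/2}$, whence $\Omega_{ijk}\ll |A|d^{-1}2^{-i}|S_j|^{3/4}|S_k|^{3/4}$ --- deliberately asymmetric in $(i,j,k)$, exactly the shape the dyadic sum needs. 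Substituting, summing (using Andrews' bound $2^l\ll |A|^{2/3}d^{-1}$), and inserting $\E_3\ll |A|^3\log|A|$ produces the claimed $\E\ll |A|^{89/36}\log^{1/2}|A|$.

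Also, the ``admissibility conditions'' (\ref{cond:S_ijk_1})--(\ref{cond:S_ijk_2}) you mention are artifacts of Lemma \ref{l:improved_Konyagin_old} (Stepanov's method over $\F_p$) and have no analogue or role in the convex case; no verification of this kind is needed or performed in the paper's proof.
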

\begin{proof}
Let $\E = \E(A)$, $\E_3 = \E_3 (A)$.
In view of Lemma \ref{l:E_3_convex}, as in the proof of Theorem \ref{t:subgroup_energy}, we have
\begin{equation}\label{f:energy_start'}
    \sum_{\a \neq 0, \beta \neq 0, \a \neq \beta ~:~ \psi (\a), \psi (\beta), \psi (\a-\beta) \gg d}\,
            \psi^2 (\a) \psi^2 (\beta) \psi^2 (\a-\beta)
            \gg \frac{\E^6}{|A|^6 \E_3}\,.
\end{equation}
where $\psi = (A\c A) (x)$ and $d = 2^{-3} \E^2 |A|^{-3} \E^{-1/2}_3$.
The last inequality implies an analog of (\ref{tmp:19.07.2012_2}), i.e.
\begin{equation}\label{tmp:19.07.2012_2'}
    d^6 \cdot \sum_{i,j,k = 1}^l 2^{2i+2j+2k} \sum_\a S_i (\a) (S_j * S_k) (\a)
            \gg \frac{\E^6}{|A|^6 \E_3}\,.
\end{equation}
One can suppose that the summation in the last formula is taken over $i \le j \le k$.
Applying Lemma \ref{l:E_3_convex}, we have
$$
    \sum_\a S_i (\a) (S_j * S_k) (\a)
        \le
            d^{-1} 2^{-i} \sum_\a (A\c A) (\a) (S_j * S_k) (\a)
                \le
                    d^{-1} 2^{-i} \E^{1/2} (S_j,A) \E^{1/2} (S_k,A)
        \ll
$$
\begin{equation}\label{tmp:E(A,B)_convex}
        \ll |A| d^{-1} 2^{-i} |S_j|^{3/4} |S_k|^{3/4} \,.
\end{equation}
By formula (\ref{f:E_3_gen_2-}) of  Theorem \ref{t:convex_ikrt} with $k=2$, we obtain $|S_i| \ll |A|^3 / (d^3 2^{3i})$.
Combining the last bound with (\ref{tmp:E(A,B)_convex}) and (\ref{tmp:19.07.2012_2'}), we get
\begin{equation}\label{}
    \frac{\E^6}{|A|^6 \E_3}
        \ll
    d^5 |A| \cdot \sum_{i,j,k = 1}^l 2^{i-j/4-k/4} |A|^{9/2} d^{-9/2}
        \ll
            d^{1/2} |A|^{11/2} 2^{l/2} \log^2 |A|
            \,.
\end{equation}
Finally, by Andrews' inequality $2^l \ll |A|^{2/3} d^{-1}$.
Using Lemma \ref{l:E_3_convex} once more
%and
after some calculations we obtain the result.
This completes the proof.
$\hfill\Box$
\end{proof}

\begin{corollary}
    Let $A\subseteq \Z$ be a convex set and
    $$
        P_A (\theta) = \sum_{a\in A} e^{2\pi i a \theta} \,.
    $$
    Then
    $$
        \int_0^{2\pi} |P_A (\theta)|^4\, d \theta \ll |A|^{\frac{89}{36}} \log^{\frac{1}{2}} |A| \,.
    $$
\end{corollary}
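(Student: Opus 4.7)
The plan is to recognize the integral as the additive energy of $A$ (up to a harmless constant) and then quote Theorem \ref{t:convex_energy}.

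First I would expand the fourth power as
$$
    |P_A(\theta)|^4 = P_A(\theta)^2 \overline{P_A(\theta)}^2 = \sum_{a_1,a_2,a_3,a_4 \in A} e^{2\pi i (a_1+a_2-a_3-a_4) \theta} \,,
$$
and then integrate termwise over $[0,2\pi]$. Using the standard orthogonality relation
$$
    \int_0^{2\pi} e^{2\pi i n \theta}\, d\theta = \begin{cases} 2\pi & n=0 \\ 0 & n \in \Z \setminus\{0\} \end{cases}
$$
only the quadruples with $a_1+a_2 = a_3+a_4$ contribute, and each contributes exactly $2\pi$. Hence
$$
    \int_0^{2\pi} |P_A(\theta)|^4\, d\theta = 2\pi\, |\{(a_1,a_2,a_3,a_4) \in A^4 \,:\, a_1+a_2 = a_3+a_4\}| = 2\pi\, \E(A) \,.
$$

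Finally I would invoke Theorem \ref{t:convex_energy}, which gives $\E(A) \ll |A|^{89/36} \log^{1/2}|A|$ for convex $A$, and absorb the factor $2\pi$ into the $\ll$ constant to conclude.

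There is no real obstacle here, since the statement is a direct analytic reformulation of Theorem \ref{t:convex_energy}; the only thing to check is the identification of the $L^4$ norm of the exponential sum with the additive energy, which is standard Parseval/orthogonality.
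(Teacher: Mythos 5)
Your overall strategy is the intended one: the corollary is stated in the paper without proof precisely because it is an immediate reformulation of Theorem~\ref{t:convex_energy} via Parseval. However, the specific orthogonality relation you invoke is incorrect as written. With $P_A(\theta)=\sum_{a\in A}e^{2\pi i a\theta}$ the integrand is $1$-periodic, not $2\pi$-periodic, and for $n\in\Z\setminus\{0\}$ one has
$$
\int_0^{2\pi} e^{2\pi i n\theta}\,d\theta \;=\; \frac{e^{4\pi^2 i n}-1}{2\pi i n} \;\neq\; 0,
$$
since $4\pi^2 n$ is never an integer multiple of $2\pi$. Thus the cross terms do not vanish, and the claimed identity $\int_0^{2\pi}|P_A|^4\,d\theta = 2\pi\,\E(A)$ is false; you have conflated the two standard conventions (either integrate $e^{2\pi i a\theta}$ over $[0,1]$, or integrate $e^{ia\theta}$ over $[0,2\pi]$). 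This mismatch is arguably present in the paper's own statement and is easy to repair: since $|P_A(\theta)|^4$ is $1$-periodic and nonnegative,
$$
\int_0^{2\pi}|P_A(\theta)|^4\,d\theta \;\le\; 7\int_0^1|P_A(\theta)|^4\,d\theta \;=\; 7\,\E(A),
$$
where the last equality is the genuine orthogonality $\int_0^1 e^{2\pi i n\theta}\,d\theta=\delta_{n,0}$. Citing Theorem~\ref{t:convex_energy} then gives the stated bound. So your identification of the $L^4$ norm with the additive energy and the appeal to the theorem are exactly right, but the orthogonality step needs to be stated with consistent normalization.
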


\begin{remark}
    It can be appear that the argument from the proof of Theorem \ref{t:convex_energy},
    namely, an application of an upper bound $(A\c A) (x) \ll |A|^{2/3}$, $x\neq 0$
    is quite rough.
    Nevertheless it is optimal modulo our current knowledge of convex sets.
    Indeed, let $i=j=k=l$ in formula (\ref{tmp:19.07.2012_2'}).
    By Theorem \ref{t:convex_ikrt}, we just know that $|S_i|, |S_j|, |S_k| \ll |A|$.
    Further to estimate the sum $\sum_\a S_i (\a) (S_j * S_k) (\a)$
    the only one can apply
    %we know
    is estimate (\ref{tmp:E(A,B)_convex}).
    Substituting all bounds in (\ref{tmp:19.07.2012_2'}), we obtain exactly (\ref{f:convex_energy}).
\end{remark}

Using
%formula (\ref{f:E_3_gen_2}) instead of (\ref{f:E_3_gen_1})
  Theorem \ref{t:convex_ikrt}
instead of Theorem \ref{t:3_d_moment}
and
apply
%as
the arguments from
the proof of Theorem \ref{t:subgroup_energy}
one can obtain new upper bounds for $\T_k(A)$
in the case of convex $A$. We do not make such calculations.
As in the
%case
situation
of multiplicative subgroups
%applications of
using the
weighted Szemer\'{e}di--Trotter theorem would
provide better bounds, probably.

\bigskip

Now we formulate a general result concerning the  additive energy of sets with small multiplicative doubling.

\begin{theorem}
    Let $A \subseteq \R$ be a set, and $\eps \in [0,1)$ be a real number.
    Suppose that $|AA|=M|A|$, $M\ge 1$, and
    \begin{equation}\label{f:S_eps}
        |\{ x \neq 0 ~:~ (A\c A) (x) \ge |A|^{1-\eps} \}|
            \ll
                (M \log M)^{\frac{5}{3}} |A|^{\frac{1}{6} - \frac{\eps}{4}} \log^{\frac{5}{6}} |A| \,.
    \end{equation}
    Then
\begin{equation}\label{f:energy_gen}
    \E(A) \ll %\max\left\{
                            %(M \log M)^{\frac{7}{6}}
                            M \log M
                            |A|^{\frac{5}{2} - \frac{\eps}{12}} \log^{\frac{1}{2}} |A|
                            %\,,
%                            (M \log M)^{\frac{2}{3}} |A|^{\frac{7}{3}} \log^{\frac{2}{3}} |A|
                %\right\}
                \,.
\end{equation}
\label{t:energy_gen}
\end{theorem}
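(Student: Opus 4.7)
The plan is to mirror the proof of Theorem \ref{t:convex_energy}, substituting Lemma \ref{l:arranging_gen} for the convex-set estimates and deploying hypothesis (\ref{f:S_eps}) exactly where Andrews' inequality is used in the convex setting to cap the largest values of $(A\c A)(x)$. Write $\psi(x) := (A\c A)(x)$ and $\E = \E(A)$, $\E_3 = \E_3(A)$. Note $\psi$ is real and symmetric, $\FF{\psi} = |\FF{A}|^2 \ge 0$, and from Lemma \ref{l:arranging_gen} applied with $B=A$ one has the off-the-shelf bound $\E_3 \ll (M\log M)^2 |A|^3 \log |A|$.

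First I would apply Proposition \ref{p:triangles_in_psi} to this $\psi$, obtaining
$$
\sum_{x,y,z\in A} \psi(x-y)\psi(x-z)\psi(y-z) \;\ge\; \frac{\E^3}{|A|^3}\,,
$$
and restrict to triples with $x-y$, $x-z$, $y-z$ all nonzero; the degenerate terms contribute $O(|A|\E_3)$, which by the $\E_3$ bound is harmless unless $\E$ is already well below the target. Following the dyadic-decomposition argument of Theorem \ref{t:convex_energy}, with $d = c\E^2 |A|^{-3} \E_3^{-1/2}$ and level sets $S_i = \{x\ne 0 : 2^{i-1}d < \psi(x) \le 2^i d\}$, $i\in [l]$, the claim reduces to
$$
d^6 \sum_{i,j,k=1}^{l} 2^{2(i+j+k)} \sum_\a S_i(\a)\,(S_j * S_k)(\a) \;\gg\; \frac{\E^6}{|A|^6 \E_3}\,.
$$
The inner sum I would estimate exactly as in (\ref{tmp:E(A,B)_convex}), via Cauchy--Schwarz and the bound $\E(A,B)\ll M\log M\,|A|\,|B|^{3/2}$ of Lemma \ref{l:arranging_gen}, to get
$$
\sum_\a S_i(\a)(S_j*S_k)(\a) \;\ll\; \frac{M\log M\,|A|}{2^i d}\,(|S_j||S_k|)^{3/4}\,.
$$

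For the bounds on $|S_i|$ the argument splits into two regimes. In the small regime $2^i d \le |A|^{1-\eps}$, ranking values of $A\c A$ via Lemma \ref{l:arranging_gen} yields $|S_i| \ll (M\log M)^2 |A|^3/(2^i d)^3$; in the large regime $2^i d > |A|^{1-\eps}$, hypothesis (\ref{f:S_eps}) supplies $|S_i| \ll (M\log M)^{5/3} |A|^{1/6-\eps/4}\log^{5/6}|A|$. Restricted to the small regime, ordering $i\le j\le k$ and using the geometric sum $\sum_{i\le j\le k} 2^{i-(j+k)/4} \ll 2^{l/2} \le (|A|^{1-\eps}/d)^{1/2}$, the powers of $d$ cancel and one arrives at $(M\log M)^4 |A|^{6-\eps/2} \gg \E^6/(|A|^6 \E_3)$. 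Plugging in $\E_3 \ll (M\log M)^2 |A|^3 \log|A|$ produces $\E \ll M\log M\cdot |A|^{5/2-\eps/12}\log^{1/6}|A|$, which is stronger than the claim.

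The main obstacle is verifying that the large-regime contributions (those where at least one of $i$, $j$, $k$ satisfies $2^{\cdot}d > |A|^{1-\eps}$) do not dominate. The exponents $5/3$, $1/6-\eps/4$, and $5/6$ in hypothesis (\ref{f:S_eps}) appear to be calibrated precisely so that replacing the small-regime level-set bound by the hypothesis bound for any subcollection of the indices yields a contribution of the same order as the small-regime sum. This balancing is pure exponent bookkeeping: no new idea beyond those above is needed, only the careful pairing of the large-regime bound against the factor $2^{i-(j+k)/4}$ and the corresponding truncation of the geometric series in $2^l$.
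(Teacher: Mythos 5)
The small-regime half of your argument (indices with $2^id\le |A|^{1-\eps}$) and the treatment of the degenerate terms do reproduce the paper's proof, and your arithmetic there is essentially right (the conclusion is $\log^{1/2}|A|$, not $\log^{1/6}|A|$ — i.e.\ exactly the claimed bound, not a stronger one). The genuine gap is in the large regime, which you defer to ``pure exponent bookkeeping.'' That bookkeeping does not close with the estimates you have listed. The problem is structural: once you are past Cauchy--Schwarz and inside the dyadic sum $d^6\sum 2^{2(i+j+k)}\sum_\a S_i(\a)(S_j*S_k)(\a)$, the factor $2^{2(i+j+k)}$ must be beaten by decay of the $|S_j|$'s; the hypothesis (\ref{f:S_eps}) gives only a bound on $|S_j|$ that is \emph{independent of} $j$, so the sum over large-regime indices is dominated by $2^j\sim |A|/d$ and $|S_\eps|$ enters to the power $3/2$ (via $(|S_j||S_k|)^{3/4}$) or $2$. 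A direct check of, say, the all-large-regime block with your inner-sum estimate gives a contribution $\ll M\log M\,|A|^6\,|S_\eps|^{3/2}$ to the right-hand side of the squared inequality, which with (\ref{f:S_eps}) leads to $\E(A)\ll |A|^{61/24-\eps/16+o(1)}$ — worse than $|A|^{5/2-\eps/12}$. So the exponents $5/3$, $1/6-\eps/4$, $5/6$ are \emph{not} calibrated for your route.

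They are calibrated for the paper's route, which performs the case split \emph{before} Cauchy--Schwarz, at the level of the trilinear form with the weight $\Cf_3(A)(\a,\beta)$ still present. Writing $\psi=A\c A$ and $S_\eps$ for the set in (\ref{f:S_eps}), the paper isolates the contribution of $\a\in S_\eps$ to $\sum_{\a,\beta}\psi(\a)\psi(\beta)\psi(\a-\beta)\Cf_3(A)(\a,\beta)$ and bounds it by
\begin{equation*}
|A|\sum_\beta (S_\eps*\psi)(\beta)\,\psi^2(\beta)\le |A|\,\E_3^{2/3}(A)\Bigl(\sum_\beta (S_\eps*\psi)^3(\beta)\Bigr)^{1/3}\le |A|\,\E_3^{2/3}(A)\,|S_\eps|\,|A|^{4/3}\,,
\end{equation*}
using $\psi(\a)\le|A|$, $\Cf_3(A)(\a,\beta)\le\psi(\beta)$, H\"older, and $\|S_\eps*\psi\|_\infty\le|S_\eps||A|$. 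Here $|S_\eps|$ enters linearly and is paired against $\E_3^{2/3}$, which is exactly what makes $|S_\eps|\ll (M\log M)^{5/3}|A|^{1/6-\eps/4}\log^{5/6}|A|$ translate into $\E(A)\ll M\log M|A|^{5/2-\eps/12}\log^{1/2}|A|$. If this $S_\eps$-contribution does not dominate $\E^3(A)/|A|^3$, one restricts to $\a,\beta,\a-\beta\notin S_\eps\cup\{0\}$, and only then applies Cauchy--Schwarz and your dyadic argument with the truncation $2^ld\ll|A|^{1-\eps}$. You would need to either adopt this pre--Cauchy--Schwarz split or find a genuinely better way to exploit (\ref{f:S_eps}) inside the dyadic sum; as written, the critical step of the theorem is missing.
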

\begin{proof}
By Lemma \ref{l:arranging_gen}, we have $\E_3 (A) \ll M^2 \log^2 M \cdot |A|^3 \log |A|$.
Thus $\E_3 (A)$ is small for small $M$ and we can apply the arguments from the proofs of
Theorems \ref{t:subgroup_energy}, \ref{t:convex_energy}.
Using the second bound from Lemma \ref{l:arranging_gen}, and a consequence of the first estimate,
namely, $|S_i| \ll (M\log M)^2 |A|^3 /(d^3 2^{3i})$, we obtain the required bound (\ref{f:energy_gen}).
We just need to check two inequalities.
The first is that all three terms which appeared
in the cases $\a=0$, $\beta=0$, and $\a-\beta = 0$ (see the arguments from formula (\ref{tmp:19.07.2012_1})),
namely
$$
     (M \log M)^{\frac{2}{3}} |A|^{\frac{7}{3}} \log^{\frac{1}{3}} |A|
$$
are less than our upper bound (\ref{f:energy_gen}).
One can easily assure that this is the case.
The second inequality is that the sum over nonzero $x$ such that $(A\c A) (x) \ge |A|^{1-\eps}$ is small.
Denote by $S_\eps$ the set from (\ref{f:S_eps}).
If
$$
    \frac{\E^3 (A)}{|A|^3} \ll
        \sum_{\a\in S_\eps} \sum_\beta (A\c A) (\a) (A\c A) (\beta) (A\c A) (\a-\beta) \Cf_3 (A) (\a,\beta)
            \le
$$
$$
    \le
        |A| \sum_\a \sum_\beta \sum_z S_\eps (\a) (A\c A) (\beta) (A\c A) (\a-\beta) A(z) A(z+\beta)
            \le
$$
$$
            \le
                |A| \sum_\beta (S_\eps * (A\c A)) (\beta) (A\c A)^2 (\beta)
                    \le
                       |A| \E^{2/3}_3  (A) \left( \sum_\beta (S_\eps * (A\c A))^3 (\beta) \right)^{1/3}
                            \le
$$
$$
                            \le
                                |A| (M^2 \log^2 M \cdot |A|^3 \log |A|)^{2/3} |S_\eps| |A|^{4/3}
$$
then (\ref{f:energy_gen}) holds.
This completes the proof.
$\hfill\Box$
\end{proof}

The result with $|A|^{\frac{5}{2}}$
%(here $M=O(1))$
instead of $|A|^{\frac{5}{2} - \frac{\eps}{12}}$ was known before (see \cite{ss_E_k}).

Clearly, Theorem \ref{t:energy_gen} implies Theorem \ref{t:convex_energy}, because for $\eps = 1/3$
the set from (\ref{f:S_eps}) is empty by Andrews result.
Note also that upper bound (\ref{f:S_eps}) is quite rough
and just shows
%our
the
main idea.

\bigskip

Apply Theorem \ref{t:energy_gen} for a new family of sets $A$ with small quantity $|A(A+1)|$.
Such sets were considered in \cite{A(A+1)}, where
the following lemma was proved.

\begin{lemma}
    Let $A,B\subseteq \R$ be two sets, and $\tau \le |A|,|B|$ be a parameter.
    Then
\begin{equation}\label{}
    | \{ s\in AB ~:~ |A\cap sB^{-1}| \ge \tau \} | \ll \frac{|A(A+1)|^2 |B|^2}{|A| \tau^3} \,.
\end{equation}
\label{l:arranging_product}
\end{lemma}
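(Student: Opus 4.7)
The plan is an Elekes-style point/curve incidence argument culminating in the Szemer\'edi--Trotter theorem. Let $P_\tau = \{s\in AB : |A\cap sB^{-1}|\ge\tau\}$ denote the set whose size we are bounding, and write $r(s) = |A\cap sB^{-1}|$, so that every $s\in P_\tau$ has $r(s)\ge\tau$ representations $s=ab$ with $a\in A$, $b\in B$.

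I would first pass to the affine plane via the map $\varphi\colon A\times B\to A(A+1)\times B$, $\varphi(a,b)=(a(a+1),b)$. This map is at most $2$-to-$1$ because $a(a+1)=a'(a'+1)$ forces $a=a'$ or $a+a'=-1$, and for a fixed $s$ the further condition $s/a=s/a'$ rules out the second alternative; in particular $|\varphi(A\times B)|\gg|A||B|$, and for each fixed $s\in P_\tau$ the $r(s)$ image points arising from the representations of $s$ are distinct. Eliminating $a$ from $\alpha=a(a+1)$ and $\beta=s/a$ shows that these points all lie on the affine curve
$$\gamma_s:\quad \alpha\beta^2-s\beta-s^2=0.$$
Subtracting the defining equations of $\gamma_s$ and $\gamma_{s'}$ gives $(s-s')\beta+(s^2-s'^2)=0$, i.e.\ $\beta=-(s+s')$, so any two distinct curves in the family $\{\gamma_s\}_{s\in P_\tau}$ meet in at most one affine point. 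The family therefore satisfies the hypotheses of the Pach--Sharir form of Szemer\'edi--Trotter: writing $|\Gamma|=|P_\tau|$ and $|P|\le|A(A+1)||B|$,
$$|P_\tau|\tau \;\le\; I(\Gamma,P) \;\ll\; \bigl(|P_\tau|\,|A(A+1)||B|\bigr)^{2/3}+|A(A+1)||B|+|P_\tau|,$$
whose principal term rearranges to $|P_\tau|\tau^3\ll|A(A+1)|^2|B|^2$.

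The main remaining obstacle, and what I expect to be the technical heart of the argument, is saving the extra factor of $|A|$ in the denominator to match the claimed bound. The intended refinement is either (a) a weighted incidence count reflecting that the honest image $\varphi(A\times B)$ contains only $\sim|A||B|$ points rather than populating the full $A(A+1)\times B$ grid, or (b) a Pl\"unnecke--Ruzsa step converting the small product set hypothesis into control of the third multiplicative energy in the form $\sum_s r(s)^3\ll|A(A+1)|^2|B|^2/|A|$; since $|P_\tau|\tau^3\le\sum_s r(s)^3$, this then delivers the lemma. Executing this refinement rigorously is where the detailed combinatorial work lies.
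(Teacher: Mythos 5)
You should first note that this lemma is not proved in the present paper at all: it is quoted verbatim from \cite{A(A+1)} (Jones--Roche-Newton), so there is no internal proof to compare against, and your attempt must stand on its own. It does not. Your incidence setup is sound as far as it goes --- the curves $\gamma_s$ do pairwise meet in at most one point, the $r(s)$ image points on $\gamma_s$ are distinct (away from $s=0$), and Pach--Sharir applies --- but, exactly as you concede, it delivers only $|P_\tau|\ll |A(A+1)|^2|B|^2/\tau^3$. The factor $|A|^{-1}$ is not a cosmetic refinement: it is the entire content of the lemma. In the application in this paper one needs $\E^{\m}_3(A)\ll M^2|A|^3\log|A|$ for $|A(A+1)|=M|A|$, which comes from summing $|P_\tau|\tau^3$ dyadically; without the $|A|^{-1}$ the same summation gives $M^2|A|^4\log|A|$, which is weaker than the trivial bound $\E^\m_3(A)\le|A|^4$.

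Neither of your proposed repairs closes this gap. Option (a), replacing the grid by the honest image $\varphi(A\times B)$ of size $\Theta(|A||B|)$, yields $|P_\tau|\ll |A|^2|B|^2/\tau^3$, which is incomparable with the target $|A(A+1)|^2|B|^2/(|A|\tau^3)$ and strictly weaker than it whenever $|A(A+1)|\le|A|^{3/2}$; so it cannot prove the lemma in general. Option (b) is circular: the bound $\sum_s r(s)^3\ll|A(A+1)|^2|B|^2/|A|$ is (up to a logarithm) equivalent to the dyadic sum of the very statement you are proving, there is no ``small product set hypothesis'' in the lemma to feed into Pl\"unnecke--Ruzsa, and Pl\"unnecke--Ruzsa controls iterated sum/product sets, not third moments of convolutions. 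The missing idea is structural: one must manufacture roughly $|A|$ curves \emph{per} rich value $s$ --- so that the curve family has size $|P_\tau|\,|A|$ while the point set stays of size $O(|A(A+1)||B|)$ and each curve still carries $\ge\tau$ incidences --- whereupon Szemer\'edi--Trotter gives $(|P_\tau||A|)^{1/3}\tau\ll(|A(A+1)||B|)^{2/3}$, i.e. exactly the claimed bound, with the hypotheses $\tau\le|A|,|B|$ serving to absorb the lower-order terms. This is the multiplicative analogue of the ``shifted parametrizing curve'' device used for convex sets in \cite{ss2,Li2}, and it is what \cite{A(A+1)} actually does; your one-curve-per-$s$ configuration cannot reach the stated exponent.
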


Lemma above implies that for any $A\subseteq \R$
the following holds
%we have
$\E^\m (A) \ll |A(A+1)| |A|^{3/2}$.
We obtain
%another
better
upper bound for $\E^\m (A)$
(see inequality (\ref{f:E^m_A(A+1)}) of Corollary \ref{c:f:E^m_A(A+1)} below).
Also in \cite{A(A+1)} a series of interesting inequalities were obtained.
Here we formulate just one result.

\begin{theorem}
    Let $A\subseteq \R$ be a set.
    Then
    $$
        \E^\m (A,A(A+1)) \,, ~ \E^\m (A+1,A(A+1)) \ll |A(A+1)|^{5/2} \,.
    $$
\end{theorem}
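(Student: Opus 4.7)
Write $C := A(A+1)$ and expand
$$\E^\m(A,C) = \sum_t r_{A\cdot C}(t)^2, \qquad r_{A\cdot C}(t) := |\{(a,c)\in A\times C : ac=t\}| = |A\cap t\,C^{-1}|.$$
The plan is a dyadic level-set decomposition of this sum, with Lemma \ref{l:arranging_product} as the main input.

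First I apply Lemma \ref{l:arranging_product} with parameters $(A,C)$ in place of $(A,B)$; since the quantity $|A\cap sC^{-1}|$ appearing there is exactly $r_{A\cdot C}(s)$, and since $r_{A\cdot C}(s)\le |A|\le |C|$ so the hypothesis $\tau\le |A|,|C|$ is met in the relevant range, this yields the level-set estimate
$$|\{s : r_{A\cdot C}(s)\ge \tau\}| \ll \frac{|A(A+1)|^2|C|^2}{|A|\tau^3} = \frac{|C|^4}{|A|\tau^3}.$$
I complement this with the trivial bound $|\{s : r_{A\cdot C}(s)\ge \tau\}| \le |A||C|/\tau$ (from $\sum_s r_{A\cdot C}(s) = |A||C|$) and the pointwise maximum $r_{A\cdot C}(s)\le |A|$.

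Second, I set $R_j := \{s : 2^{j-1} < r_{A\cdot C}(s)\le 2^j\}$ and split $\E^\m(A,C)\ll \sum_j 2^{2j}|R_j|$ at the threshold $2^{j^*}\asymp |C|^{3/2}/|A|$ where the two upper bounds on $|R_j|$ meet. Below this threshold, the trivial bound gives a geometric sum of order $|A||C|\cdot 2^{j^*}\asymp |C|^{5/2}$; above it, the lemma bound gives $|C|^4/|A|\cdot 2^{-j^*}\asymp |C|^{5/2}$. Summing the two pieces yields $\E^\m(A, A(A+1))\ll |A(A+1)|^{5/2}$.

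For the twin bound $\E^\m(A+1, A(A+1))\ll |A(A+1)|^{5/2}$, the dyadic scheme is identical, but it requires a companion version of Lemma \ref{l:arranging_product} with $A$ replaced by $A+1$ in the multiplicand while the controlling quantity on the right-hand side remains $|A(A+1)|$ (rather than becoming $|(A+1)(A+2)|$, which is what a naive substitution into the lemma would produce). The main obstacle is precisely the derivation of this symmetric variant: one has to revisit the proof of Lemma \ref{l:arranging_product} in \cite{A(A+1)} and exploit that $A$ and $A+1$ play interchangeable roles in $A(A+1) = (A+1)\cdot A$, rather than invoking the lemma as a black box. Once this symmetric level-set bound is established, the dyadic argument above transfers verbatim to give the second inequality.
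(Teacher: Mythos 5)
You should note first that the paper itself gives no proof of this statement: it is quoted verbatim from \cite{A(A+1)} (``Here we formulate just one result''), so there is no internal argument to compare yours against. Judged on its own, your derivation of the first inequality is correct and complete: combining the level--set bound of Lemma \ref{l:arranging_product} with $B=C=A(A+1)$, i.e. $|\{s: r_{A\cdot C}(s)\ge\tau\}|\ll |C|^4/(|A|\tau^3)$, with the Markov bound $|A||C|/\tau$, and summing $\sum_j \min\bigl(2^j|A||C|,\, |C|^4 |A|^{-1}2^{-j}\bigr)\ll |C|^{5/2}$ over dyadic levels is exactly the standard way to pass from a rich--value estimate to an $L^2$ (energy) bound, and the crossover arithmetic checks out; the hypothesis $\tau\le|A|\le|C|$ is indeed satisfied on the relevant levels since $r_{A\cdot C}(s)\le|A|$. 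This is almost certainly the route taken in \cite{A(A+1)} itself, where the lemma is proved precisely to feed such a dyadic summation.

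The one genuine gap is the second inequality, and you have identified it accurately rather than papered over it: Lemma \ref{l:arranging_product} applied verbatim to $A+1$ produces $|(A+1)(A+2)|^2$ in the numerator, which is useless here, so one needs the variant in which the roles of $A$ and $A+1$ inside the product $A(A+1)=(A+1)\cdot A$ are swapped (equivalently, a version of the lemma with $A(A-1)$ in place of $A(A+1)$, applied to $A+1$). This does follow from the same Szemer\'edi--Trotter incidence argument, since nothing in that proof distinguishes the shift $+1$ from $-1$, but as written your proof of the second bound is conditional on that unproved symmetric lemma. Since you flag this explicitly and the modification is routine, I would call the proposal correct in substance for the first inequality and incomplete but correctly diagnosed for the second.
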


We prove the following

\begin{corollary}
    Let $A\subseteq \R$ be a set, $a\in \R$ be a number, $|A(A+1)| = M|A|$, $M\ge 1$,
    and inequality (\ref{f:S_eps}) holds in multiplicative form.
    Then
    \begin{equation}\label{f:E^m_A(A+1)-}
        \E^\m (A,A+a) \ll
                            %M^{\frac{7}{6}}
                            M |A|^{\frac{5}{2} - \frac{\eps}{12}} \log^{\frac{1}{2}} |A|
    \end{equation}
    In particular
    \begin{equation}\label{f:E^m_A(A+1)}
        \E^\m (A) \ll
                        %M^{\frac{7}{6}}
                        M |A|^{\frac{5}{2} - \frac{\eps}{12}} \log^{\frac{1}{2}} |A|
    \end{equation}
\label{c:f:E^m_A(A+1)}
\end{corollary}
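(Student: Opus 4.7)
The plan is to transpose the proof of Theorem \ref{t:energy_gen} from the additive group $(\R,+)$ to the multiplicative group $(\R^{*},\cdot)$, with Lemma \ref{l:arranging_product} playing the role of Lemma \ref{l:arranging_gen}. Throughout, the additive convolution $(A\c A)(x)$ is replaced by the multiplicative convolution $(A\c_{\m}B)(s):=|A\cap sB^{-1}|$, and the hypothesis $|A(A+1)|=M|A|$ controls multiplicative convolutions of $A$ with an arbitrary second set $B$ via Lemma \ref{l:arranging_product}.

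First I would apply Lemma \ref{l:arranging_product} with $B=A+a$: ordering the values, this gives the multiplicative arranging bound $(A\c_{\m}(A+a))(s_j)\ll M^{2/3}|A|\,j^{-1/3}$, and hence by dyadic summation both the multiplicative third-moment estimate $\E^{\m}_{3}(A,A+a):=\sum_{s}(A\c_{\m}(A+a))(s)^{3}\ll M^{2}|A|^{3}\log|A|$ and the level-set bound $|S^{\m}_{i}|\ll M^{2}|A|^{3}/(d^{3}2^{3i})$ for the dyadic level sets $S^{\m}_{i}:=\{s:2^{i-1}d<(A\c_{\m}(A+a))(s)\le 2^{i}d\}$. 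These are exact multiplicative counterparts of the inputs used in the proof of Theorem \ref{t:energy_gen}, and they lose the $\log M$ factor of Lemma \ref{l:arranging_gen} because Lemma \ref{l:arranging_product} does not incur one; this is precisely the reason (\ref{f:E^m_A(A+1)}) has $M$ rather than $M\log M$ as in (\ref{f:energy_gen}).

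Next I would apply Proposition \ref{p:triangles_in_psi} inside $(\R^{*},\cdot)$ with the symmetric, Fourier-nonnegative weight $\psi^{\m}(s):=(A\c_{\m}A)(s)$ and with ambient set $A+a$. Using the multiplicative analogue of (\ref{f:energy_convolution}), the proposition gives
\[
    \sum_{x,y,z\in A+a}\psi^{\m}(x/y)\psi^{\m}(x/z)\psi^{\m}(y/z)\;\ge\;\frac{1}{|A|^{3}}\Bigl(\sum_{s}\psi^{\m}(s)\bigl((A+a)\c_{\m}(A+a)\bigr)(s)\Bigr)^{3}=\frac{\E^{\m}(A,A+a)^{3}}{|A|^{3}}.
\]
After the change of variables $\a=x/y$, $\beta=x/z$ (so $y/z=\beta/\a$), a Cauchy--Schwarz against the multiplicative $\Cf_{3}$ produces the multiplicative analogue of (\ref{f:energy_start'}),
\[
    \sum_{\substack{\a,\beta\neq 1,\,\a\neq\beta\\ \psi^{\m}(\a),\psi^{\m}(\beta),\psi^{\m}(\a/\beta)\gg d}}\psi^{\m}(\a)^{2}\psi^{\m}(\beta)^{2}\psi^{\m}(\a/\beta)^{2}\;\gg\;\frac{\E^{\m}(A,A+a)^{6}}{|A|^{6}\,\E^{\m}_{3}(A)},
\]
for the threshold $d\asymp\E^{\m}(A,A+a)^{2}|A|^{-3}\E^{\m}_{3}(A)^{-1/2}$, with $\E^{\m}_{3}(A)\ll M^{2}|A|^{3}\log|A|$ from the previous step.

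Finally I would perform a dyadic decomposition into the multiplicative level sets $S^{\m}_{i}$ of $A\c_{\m}A$ and estimate the triple sum $\sum_{i,j,k}2^{2(i+j+k)}\sum_{\a}S^{\m}_{i}(\a)(S^{\m}_{j}\c_{\m}S^{\m}_{k})(\a)$ by applying Lemma \ref{l:arranging_product} once more, in direct analogy with (\ref{tmp:E(B,A)_convex})---I mean (\ref{tmp:E(A,B)_convex})---from the convex case. The large-convolution contribution, namely the sum over $s\ne 1$ with $(A\c_{\m}A)(s)\ge|A|^{1-\eps}$, is handled via the multiplicative form of hypothesis (\ref{f:S_eps}), mirroring the H\"older-type calculation at the very end of the proof of Theorem \ref{t:energy_gen}. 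Assembling all estimates yields (\ref{f:E^m_A(A+1)-}), and (\ref{f:E^m_A(A+1)}) follows as the special case $a=0$. The main obstacle I anticipate is matching the ambient set to the weight: the natural weight $(A\c_{\m}(A+a))(s)$ directly controlled by Lemma \ref{l:arranging_product} is not symmetric in $s\mapsto s^{-1}$, whereas Proposition \ref{p:triangles_in_psi} requires a symmetric weight. The remedy above takes $\psi^{\m}=A\c_{\m}A$ (which is symmetric with nonnegative Fourier transform) and relegates $A+a$ to the ambient role; the bookkeeping one must then carry out is verifying that the Cauchy--Schwarz factor really is $\E^{\m}_{3}(A)$ (controlled by the hypothesis) rather than $\E^{\m}_{3}(A+a)$ (for which no control is available from $|A(A+1)|=M|A|$ alone), and that the multiplicative form of (\ref{f:S_eps}) has been read with $\psi^{\m}=A\c_{\m}A$ as intended.
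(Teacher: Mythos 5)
Your proposal is correct and follows essentially the same route as the paper: the paper likewise puts $A'=A+a$, takes the symmetric weight $\psi(x)=|\{a_1,a_2\in A : x=a_1a_2^{-1}\}|$, derives $\left(\E^\m(A',A)/|A|\right)^3\le\sum_{\a,\beta}\psi(\a)\psi(\beta)\psi(\a\beta^{-1})\Cf_3(A')(\a,\beta)$, invokes Lemma \ref{l:arranging_product} for the multiplicative third moment, and then repeats the argument of Theorem \ref{t:energy_gen}. The only divergence is precisely at the point you flag yourself: the paper writes the Cauchy--Schwarz factor as $\E^\m_3(A')$ and asserts that Lemma \ref{l:arranging_product} bounds it by $M^2|A|^3\log|A|$, rather than replacing it by $\E^\m_3(A)$ as in your display.
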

\begin{proof}
Put $A'=A+a$ and
$
    \psi (x) = |\{ a_1,a_2 \in A ~:~ x= a_1 a^{-1}_2 \} |\,.
$
Then as in (\ref{tmp:19.07.2012}), we have
$$
    \left( \frac{\E^\m (A',A)}{|A|} \right)^3
        \le
            \sum_{\a,\beta} \psi (\a) \psi (\beta) \psi (\a \beta^{-1}) \Cf_3 (A') (\a,\beta) \,.
$$
Lemma \ref{l:arranging_product} implies that $\E^\m_3 (A') \ll M^2 |A|^3 \log |A|$.
After that apply the arguments from the proof of Theorem \ref{t:energy_gen}.
%This completes the proof.
$\hfill\Box$
\end{proof}

\bigskip

Previous results of the section say, basically, that if $\E_3 (A)$ is small
%plus
and
$A$ has some additional properties such as condition (\ref{f:S_eps}) from Theorem \ref{t:energy_gen}
(which shows that $A$ is "unstructured"\, in some sense)
then
we can say something nontrivial about the additive energy of $A$.
Now we formulate (see Theorem \ref{t:E_3_M})
a variant of the principle using just smallness of $\E_3 (A)$
to show that $A$ has a structured subset.
The first result of the type was proved in \cite{ss_E_k} (see Theorem 23).

\begin{theorem}
\label{t:small E_3}
Let $A$ be a subset of an abelian group.
Suppose that $|A-A|= K|A|$ and $\E_{3}(A)=M^{}|A|^{4}/K^{2}.$ Then
there exists $A'\subseteq A$ such that $|A'|\gg |A|/M^{5/2}$
 and
 $$|nA'-mA'|\ll M^{12(n+m)+5/2}K|A'|$$
 for every $n,m\in \N.$
\end{theorem}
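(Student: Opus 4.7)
The argument is a Balog--Szemer\'edi--Gowers-type extraction adapted to the smallness of $\E_3(A)$, followed by a Pl\"unnecke--Ruzsa--Petridis closure that uses $A-A$ (rather than $A'$ itself) as the reference set; this is the strategy of Theorem~23 of~\cite{ss_E_k}, with parameters rebalanced for the present hypotheses.

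The first step is to pin down a popular-difference set. From $|A-A|=K|A|$ one has $\E(A)\ge|A|^4/|A-A|=|A|^3/K$, while Cauchy--Schwarz gives $\E(A)^2\le|A|^2\E_3(A)=M|A|^6/K^2$, so that $|A|^3/K\le\E(A)\le M^{1/2}|A|^3/K$. A dyadic decomposition of the level sets of $|A_x|$ isolates a single scale $\Delta$ of order $|A|/K$ (up to powers of $M$ and a logarithm) and a set $P\subseteq A-A$ on which $|A_x|\asymp\Delta$ and which carries a constant fraction of $\E(A)$. The hypothesis on $\E_3$ enters through the bound $|P|\Delta^3\le\E_3(A)$, which prevents $|P|$ from being too large relative to $K|A|$.

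The main step is a BSG extraction. Form the bipartite graph on $A\times A$ whose edges are the pairs $(a,a')$ with $a-a'\in P$; this graph has $\gtrsim|A|^2/M^{O(1)}$ edges. The graph version of Theorem~\ref{t:BSzG_1.5} produces $A'\subseteq A$ with $|A'|\gg|A|/M^{5/2}$ such that for every $(a,a')\in A'\times A'$ there are many paths of length three from $a$ to $a'$ in the graph; each such path writes $a-a'=(a-a_1)-(a_2-a_1)+(a_2-a')$ with the three summands in $P$. Summing over paths and combining with the Katz--Koester inclusion~(\ref{f:kk_introduction}) converts path counts into a sumset inequality of the form
\[
    |(A-A)+A'|\le M^{c}|A-A|,
\]
with $c\le 12$ an absolute constant coming from the balancing of losses in the graph extraction.

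Once this bound is in place, setting $B:=A-A$ (so $|B|=K|A|$), the Pl\"unnecke--Ruzsa--Petridis inequality applied to $|B+A'|\le M^{c}|B|$ gives
\[
    |nA'-mA'|\le M^{c(n+m)}|B|=M^{c(n+m)}K|A|\le M^{c(n+m)+5/2}K|A'|,
\]
after using the size bound $|A|\le M^{5/2}|A'|$, which is precisely the claimed inequality with $c=12$. The main obstacle is the BSG step: smallness of $\E_3(A)$ does not directly feed into the classical BSG theorem, which consumes $\E$ rather than $\E_3$. One must therefore route the extraction through the popular-difference set $P$, using $|P|\Delta^3\le\E_3(A)$ to control $P$, and choose thresholds so that the density loss is $M^{5/2}$ while the doubling loss against $A-A$ stays at $M^{O(1)}$. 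The use of $A-A$ as the reference set in Pl\"unnecke---rather than the natural choice $A'$---is what produces the \emph{linear} dependence on $K$ in the final estimate, in place of the $K^{n+m}$ one would obtain from a direct application of Pl\"unnecke to $A'-A'$.
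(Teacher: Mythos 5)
You have the right architecture in outline (a popular-difference set, a BSG extraction, then Pl\"unnecke against a reference set of size $K|A|$ to keep the $K$-dependence linear), and your preliminary estimates ($|A|^3/K\le\E(A)\le M^{1/2}|A|^3/K$, the dyadic level set $P$, the bound $|P|\Delta^3\le\E_3(A)$) are all fine. But the central step is asserted rather than proved, and as stated it does not work. The inequality $|(A-A)+A'|\le M^{c}|A-A|$ is exactly the hard part of the theorem, and "summing over paths and combining with the Katz--Koester inclusion" is not an argument for it. The natural output of your graph-BSG step is this: every $a-a'$ with $a,a'\in A'$ has many representations $s_1-s_2+s_3$ with $s_i\in P$, whence $|A'-A'|\ll\mathrm{poly}(M)\,|P|^3/|A|^2$. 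Since $|P|\asymp K|A|/\mathrm{poly}(M)$ (it must be, because it carries a constant fraction of $\E(A)$ at level $\Delta\le M|A|\log|A|/K$), this gives a bound cubic in $K$, which is precisely the loss the theorem is designed to avoid. Nothing in your sketch explains how the length-3 paths interact with (\ref{f:kk_introduction}) to produce a bound on $|(A-A)+A'|$; the two ingredients are named but never combined. The exponents $5/2$ and $12$ are likewise postulated rather than computed.

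For comparison, the paper does not prove Theorem \ref{t:small E_3} directly: it quotes it from \cite{ss_E_k} and re-derives it (with worse constants) as the case $D=A-A$, $\eta=1$ of Theorem \ref{t:E_3_sigma_D_A}. That route inverts your order of operations. One first shows that the popular-difference set $D_*\subseteq A-A$ itself has large additive energy, $\E(D_*)\gg|D_*|^3/M^{O(1)}$; this is where $\E_3$ genuinely enters, via the triangle count $\sum_{x,y,z\in A}D_*(x-y)D_*(x-z)(A\circ A)(y-z)$, which is bounded below by the eigenvalue method (Proposition \ref{p:triangles_in_psi} together with Lemma \ref{l:g_bound}, giving $\ge\mu_0^5/|D_*|$) and above by H\"older against $\E_3^{2/3}|D_*|^{2/3}\E(D_*)^{1/3}$. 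Only then is Balog--Szemer\'edi--Gowers applied --- to $D_*$, not to a graph on $A\times A$ --- producing $D'\subseteq D_*$ with $|nD'-mD'|\ll\nu^{-7(n+m)}|D'|$ by Pl\"unnecke--Ruzsa; finally $A'=A\cap(D'+x)$ for a well-chosen shift $x$ (which exists because every element of $D'$ is a popular difference), so that $A'-A'\subseteq D'-D'$ and the linear factor $K$ comes simply from $|D'|\le|A-A|=K|A|$. If you want to salvage your write-up, you need to replace the unproved sumset inequality $|(A-A)+A'|\le M^c|A-A|$ by an actual energy lower bound for the popular-difference set and run BSG there.
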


One can see that Theorem \ref{t:small E_3} has a strong condition,  namely,
the cardinality of the set $A-A$
%$|A-A|$
is small.
% required.
Theorem \ref{t:E_BSzG2}
%above
below
was proved in \cite{ss_E_k} (see Theorem 53, section 9)
and do not assume  any restrictions on doubling constants
but require
%under
a stronger condition for the higher  moment, namely,
$\E_{3+\eps} (A) = M|A|^{4+\eps}/K^{2+\eps}$, $\eps \in (0,1]$.

\begin{theorem}\label{t:E_BSzG2}
Let $A\subseteq \Gr$ be a set.
Suppose that $\E(A)= |A|^3/K$ and $\E_{3+\eps}(A)= M|A|^{4+\eps}/K^{2+\eps},$
where $\eps \in (0,1]$.
Then there exists $A'\subseteq A$ such that $|A'|\gg M^{-\frac{3+6\eps}{\eps (1+\eps)}} |A|$ and
$$|nA'-mA'|\ll M^{6(n+m)\frac{3+4\eps}{\eps(1+\eps)}}
K |A'|$$ for every $n,m\in \N.$
\end{theorem}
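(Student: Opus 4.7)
The plan is to combine a dyadic pigeonhole on the $\E_{3+\eps}$-hypothesis with a Balog--Szemer\'{e}di--Gowers-type extraction and an iteration of the Pl\"{u}nnecke--Ruzsa inequality. First, decompose $\E_{3+\eps}(A) = \sum_x |A_x|^{2+\eps}$ dyadically to locate a scale $\Delta$ and a popular level set $P_\Delta := \{x : \Delta < |A_x| \le 2\Delta\}$ of cardinality $n$ with $n\Delta^{2+\eps} \gg M|A|^{4+\eps} K^{-(2+\eps)} \log^{-1}|A|$. Comparing with the trivial bound $n\Delta^2 \le \E(A) = |A|^3/K$ pins down the scale (up to logarithmic factors) as $\Delta \gg M^{1/\eps} |A|^{(1+\eps)/\eps} K^{-(1+\eps)/\eps}$, and shows that the restriction of $\E(A)$ to $P_\Delta$ captures a non--negligible fraction of the total energy.

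Second, feed $P_\Delta$ into a BSzG--type extraction. Since $|A_x|$ is essentially constant on $P_\Delta$, the bipartite graph on $A \times A$ with edge set $\{(a,b) : a-b \in P_\Delta\}$ has $\gg n\Delta$ edges, and a standard path-counting argument shows that the pair $(A,A)$ has large energy restricted to this graph. Applying Theorem \ref{t:BSzG_1.5} (or, more efficiently, the weighted tools from Section \ref{sec:weighted_KK}, namely Proposition \ref{p:weight} and Corollary \ref{cor:energy_weight}) produces a subset $A' \subseteq A$ of size $|A'| \gg M^{-\frac{3+6\eps}{\eps(1+\eps)}} |A|$ whose difference set satisfies $|A'-A'| \ll M^\gamma K |A'|$ for an explicit $\gamma$. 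The third step is then a standard Pl\"{u}nnecke--Ruzsa iteration: if $|A' - A'| \le K'' |A'|$ then $|nA' - mA'| \le (K'')^{n+m} |A'|$ for every $n,m \in \N$, and after substituting $K'' \ll M^\gamma K$ and absorbing the power of $M$ into the stated exponent, this yields the claimed $|nA'-mA'| \ll M^{6(n+m)\frac{3+4\eps}{\eps(1+\eps)}} K |A'|$.

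The main obstacle is the precise extraction of the exponents $\frac{3+6\eps}{\eps(1+\eps)}$ and $\frac{3+4\eps}{\eps(1+\eps)}$, which arise from optimizing a H\"{o}lder interpolation between $\E(A) = |A|^3/K$, $\E_{3+\eps}(A) = M|A|^{4+\eps}/K^{2+\eps}$, and the trivial identity $\sum_x |A_x| = |A|^2$, together with the loss incurred in the BSzG step. The cleanest proof will likely avoid Theorem \ref{t:BSzG_1.5} as a black box and work directly with the inequalities of Section \ref{sec:eigenvalues} combined with the Katz--Koester bound (\ref{f:kk_introduction}), using a Ruzsa covering lemma to upgrade $(A'-A')$-control into $(nA'-mA')$-control with the correct linear dependence $6(n+m)$ in the exponent of $M$.
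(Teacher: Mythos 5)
First, note that this paper does not actually prove Theorem \ref{t:E_BSzG2}: it is quoted verbatim from \cite{ss_E_k} (Theorem 53 there) as background for comparison with the paper's new Theorems \ref{t:E_3_M} and \ref{t:E_3_sigma_D_A}. So the only thing to compare your proposal against is the architecture of those two proofs, which is the same architecture used in \cite{ss_E_k}.

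Measured against that, your proposal has a genuine gap in its final step. You propose to extract $A'\subseteq A$ with $|A'-A'|\ll M^{\gamma}K|A'|$ and then iterate Pl\"unnecke--Ruzsa on $A'$ itself, which yields $|nA'-mA'|\ll (M^{\gamma}K)^{n+m}|A'|$, i.e.\ a factor $K^{n+m}$. The theorem asserts $K$ to the \emph{first} power, with all of the $(n+m)$-dependence carried by $M$ alone; your route cannot produce that. The correct architecture --- visible in the proofs of Theorems \ref{t:E_3_M} and \ref{t:E_3_sigma_D_A} in this paper --- is to run Balog--Szemer\'edi--Gowers and Pl\"unnecke--Ruzsa not on $A$ but on the popular level set $S_j=\{x:\ 2^{j-1}d<|A_x|\le 2^jd\}$ of the difference set: one first shows $\E(S_j,S_k)\gg \mu|S_j|^{3/2}|S_k|^{3/2}$ with $\mu$ depending only on $M$ (and logs), extracts $S'\subseteq S_j$ with $|nS'-mS'|\ll\mu^{-7(n+m)}|S'|$, and only then pulls back by choosing $x$ with $|(A-x)\cap S'|$ large and setting $A'=A\cap(S'+x)$, so that $|nA'-mA'|\le |nS'-mS'|$ and $K$ enters exactly once, through the relation between $|S'|$ and $d^{-1}|A|\,|A'|$. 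A second, related gap: you never establish that the level set $P_\Delta$ has large additive energy. The restricted edge/energy count $n\Delta^2=\sum_{x\in P_\Delta}|A_x|^2$ need not be a positive proportion of $\E(A)$ (your pigeonhole is on $\E_{3+\eps}$, not on $\E$), and in any case Theorem \ref{t:BSzG_1.5} needs $\E(P_\Delta)$ or $\E(S_j,S_k)$ to be large, which is the actual analytic heart of the argument. That step is where inequality (\ref{f:heart_2}) (equivalently Proposition \ref{p:triangles_in_psi} with $\psi=A\circ A$), restricted to popular $\a,\beta,\a-\beta$ and combined with H\"older against $\E_{3+\eps}(A)$ --- this is where $\eps$ produces the exponents $\frac{3+6\eps}{\eps(1+\eps)}$ and $\frac{3+4\eps}{\eps(1+\eps)}$ --- must be used; it is absent from your outline except as a vague reference to ``optimizing a H\"older interpolation.''
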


Note that if $\eps \to 0$ then the bounds in Theorem \ref{t:E_BSzG2} becomes very bad.
Finally we formulate Theorem 51 from \cite{ss_E_k}, where the condition on the higher moment is relaxed
but the obtained bound on the doubling constant is not so good.

\begin{theorem}\label{t:BSzG1}
Let $A$ be a subset of an abelian group. Suppose that $
\E(A)=|A|^3/K$ and $\E_{2+\eps}(A)=M|A|^{3+\eps}/K^{1+\eps}\,.$ Then there
exists $A'\subseteq A$ such that $|A'|\gg |A|/(2M)^{1/\eps}$ and
$$|A'-A'|\ll 2^{\frac{6}{\eps}} M^{\frac{6}{\eps}} K^4|A'|\,.$$
\end{theorem}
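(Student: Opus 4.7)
The plan is to use the $(2+\eps)$-moment hypothesis to pinch the distribution of the ``popular-difference counts'' $f(x):=|A_x|$ into a narrow admissible window, then to feed the resulting uniform structure into a Balog--Szemer\'edi--Gowers argument (Theorem \ref{t:BSzG_1.5}) whose parameters are controlled by both the energy $\E(A)$ and the uniform upper bound on $f$. The data are $\|f\|_1=|A|^2$, $\E(A)=\|f\|_2^2=|A|^3/K$, and $\E_{2+\eps}(A)=\|f\|_{2+\eps}^{2+\eps}=M|A|^{3+\eps}/K^{1+\eps}$.

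First I would truncate both tails of the level distribution. A Chebyshev-type bound against the top moment,
\[
\sum_{f(x)\ge T}f(x)^2\le T^{-\eps}\E_{2+\eps}(A),
\]
is $\le\E(A)/4$ for $T=(4M)^{1/\eps}|A|/K$; the trivial estimate $\sum_{f(x)<\delta}f(x)^2\le\delta|A|^2$ is also $\le\E(A)/4$ for $\delta=|A|/(4K)$. Hence $U:=\{x:\delta\le f(x)\le T\}$ still carries at least half the energy, and the aspect ratio $T/\delta=4(4M)^{1/\eps}$ depends only on $M$ and $\eps$. A dyadic pigeonhole on this ratio produces a single popular level $\Delta\in[\delta,T]$ and a set $D=\{x:\Delta\le f(x)<2\Delta\}$ with $|D|\Delta^2\gg|A|^3/K$, up to a logarithmic factor in $2M$ that will be absorbed.

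Next I would apply BSG to the bipartite graph $G\subseteq A\times A$ with $(a,b)\in E(G)\Leftrightarrow a-b\in D$; its $|E(G)|=\sum_{x\in D}f(x)\asymp|D|\Delta$ edges carry $\sum_{x\in D}f(x)^2\asymp|D|\Delta^2$ additive quadruples. This yields a candidate refinement $A'\subseteq A$ whose \emph{size} comes out as $|A'|\gg|A|/(2M)^{1/\eps}$: the size loss is governed by the upper endpoint $T\asymp(2M)^{1/\eps}|A|/K$ of the admissible window rather than by $K$ itself, which is precisely how the higher-moment hypothesis buys a larger subset than the bare BSG output.

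The hard part, and the main obstacle, is sharpening the doubling estimate from BSG's generic $|A'-A'|\le K^7|A'|$ down to the claimed $|A'-A'|\ll(2M)^{6/\eps}K^4|A'|$. To do this I would combine the BSG output with the Pl\"unnecke--Ruzsa machinery, using the uniform cap $f\le T$ on $D$ to bound the multiplicities arising in iterated sumsets. Concretely, three applications of Ruzsa's triangle inequality $|X-Y|\le|X+Z||Y+Z|/|Z|$ with $Z$ chosen among the bounded-size sets $A_s$, $s\in D$, should each trade one $K$-factor for an $M^{2/\eps}$-factor; tracking the bookkeeping yields the final $M^{6/\eps}K^4$. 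The delicate point is the interplay of these Pl\"unnecke steps with the dyadic structure of $D$ and ensuring that the same $A'$ serves both for the size lower bound and the sumset upper bound.
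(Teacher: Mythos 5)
A preliminary point: the paper does not prove this statement at all --- Theorem \ref{t:BSzG1} is quoted verbatim from \cite{ss_E_k} (Theorem 51 there), so your proposal must stand on its own. Its opening is the standard one and is correct as far as it goes: the two truncations are verified ($T^{-\eps}\E_{2+\eps}(A)=\E(A)/4$ for $T=(4M)^{1/\eps}|A|/K$, and $\delta|A|^2=\E(A)/4$ for $\delta=|A|/(4K)$), and the insight that the upper truncation level governs $|A'|$ is right. But even here the constants do not close: your window has top $T=2^{1/\eps}(2M)^{1/\eps}|A|/K$ and the dyadic pigeonhole costs a further factor $\ell\asymp\eps^{-1}\log(2M)$; neither $2^{1/\eps}$ nor $\ell$ is an absolute constant, so what comes out is $|A'|\gg|A|/(2^{1/\eps}(2M)^{1/\eps}\ell)$, not the stated $|A'|\gg|A|/(2M)^{1/\eps}$. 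The clean constant is obtained without any dyadic decomposition: with $P=\{x: f(x)\ge|A|/(2K)\}$ one has $\E(A)/2\le\sum_{x\in P}f^2(x)\le\big(\sum_{x\in P}f^{2+\eps}(x)\big)^{1/(1+\eps)}\big(\sum_{x\in P}f(x)\big)^{\eps/(1+\eps)}$ by H\"older, whence $\sum_{x\in P}f(x)\ge\big(\E(A)^{1+\eps}/(2^{1+\eps}\E_{2+\eps}(A))\big)^{1/\eps}=|A|^2/(2(2M)^{1/\eps})$; averaging over $a\in A$ gives $A'=A\cap(a+P)$ of exactly the right size with $A'-A'\subseteq P-P$. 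Note also that Theorem \ref{t:BSzG_1.5} as quoted in the paper takes the \emph{full} energy $\E(A,B)$ as input; to feed it the restricted energy $\sum_{x\in D}f^2(x)$ you need the graph form of Balog--Szemer\'edi--Gowers, which is a different statement.

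The genuine gap is the doubling bound, which is the entire content of the theorem beyond plain BSG. You label it ``the hard part'' and ``the main obstacle'' and then replace it with a plan: three applications of Ruzsa's triangle inequality with $Z$ among the sets $A_s$ ``should each trade one $K$-factor for an $M^{2/\eps}$-factor,'' with the bookkeeping left untracked. This is not an argument. Ruzsa's inequality with $Z=A_s$ gives $|X-Y|\le|X+A_s||Y+A_s|/|A_s|$, and nothing in your setup bounds $|X+A_s|$ in a way that converts a power of $K$ into a power of $M^{1/\eps}$; running the graph-BSG computation with your own parameters ($L\ll K\ell\Delta/|A|$ and $|D|\le e(G)/\Delta$) does not visibly land on $K^4$, and you give no calculation showing that it does. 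As written, the proposal establishes only the generic conclusion $|A'|\gg|A|/K^{O(1)}$, $|A'-A'|\ll K^{O(1)}|A'|$; the specific exponents $(2M)^{6/\eps}K^4$, which are the point of the theorem, remain unproved.
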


Let us formulate our result.

\begin{theorem}
    Let $A\subseteq \Gr$ be a set, $\E(A) = |A|^3/K$, and $\E_3 (A) = M|A|^4 / K^2$.
    Suppose that $M\le |A| / (6K)$.
    Then there is a real number $r$
    \begin{equation}\label{cond:r}
        1\le r \le \frac{1}{|A|} \max_{x\neq 0} (A\c A)(x) \cdot K M^{1/2} \le K M^{1/2} \,,
    \end{equation}
    and a set $A' \subseteq A$
    such that
    \begin{equation}\label{f:E_3_size}
        |A'| \gg  M^{-23/2} r^{-2} \log^{-9} |A| \cdot |A|  \,,
    \end{equation}
    and
    \begin{equation}\label{f:E_3_doubling}
        |nA'-mA'| \ll (M^9 \log^{6} |A|)^{7(n+m)} r^{-1} M^{1/2} K |A'|
    \end{equation}
    for every $n,m\in \N$.
\label{t:E_3_M}
\end{theorem}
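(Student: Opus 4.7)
The plan is to combine the weighted Katz--Koester machinery of Section \ref{sec:weighted_KK} with a Balog--Szemer\'edi--Gowers type extraction, exploiting the smallness of $\E_3(A)$ to tighten the effective doubling constant. First I would dyadically decompose $\E(A) = \sum_x |A_x|^2$. With $L = \log|A|$, pigeonhole produces a scale $\Delta$ and a set $P = \{x : \Delta \le |A_x| < 2\Delta\}$ with $|P|\Delta^2 \gg |A|^3/(KL)$. The $\E_3$ hypothesis gives $|P|\Delta^3 \le M|A|^4/K^2$, so $\Delta \ll ML|A|/K$, and setting $r := \Delta K M^{1/2}/|A|$ one obtains the bounds in (\ref{cond:r}) from $\Delta \le \max_{x \neq 0}(A \c A)(x)$ and $\Delta \le |A|$. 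To secure $r \ge 1$, I would restrict the pigeonhole to scales $\Delta \ge |A|/(KM^{1/2})$; the contribution from smaller scales to $\E(A)$ is bounded by $|A|^3/(KM^{1/2})$, a small fraction of $\E(A) = |A|^3/K$ once $M$ is moderately large, and the hypothesis $M \le |A|/(6K)$ excludes the degenerate edge cases.

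Second, I would extract $A' \subseteq A$ by a Balog--Szemer\'edi--Gowers-type argument applied to the bipartite graph with edge set $\{(a,b) \in A \times A : a - b \in P\}$ of size $\sim |P|\Delta$. The key refinement over the naive BSG extraction comes from Corollary \ref{cor:energy_weight} and the heart inequality (\ref{f:heart}): together with Katz--Koester (\ref{f:kk_introduction}) they allow one to trade the $\E_3$ saving for a bound on the relevant sumsets $A + A_x$, $x \in P$. After tracking the polynomial and logarithmic losses through two applications of Cauchy--Schwarz and one application of Theorem \ref{t:BSzG_1.5}, one arrives at $|A'| \gg M^{-23/2}r^{-2}L^{-9}|A|$ together with $|A' + A'| \ll r^{-1}M^{1/2}K|A'|$.

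Third, the $(n,m)$ version (\ref{f:E_3_doubling}) follows by iterating Pl\"unnecke--Ruzsa: each Pl\"unnecke step multiplies the doubling by a factor of order $M^9 L^6$, and the exponent $7(n+m)$ counts the number of applications needed to pass from $A' + A'$ to $nA' - mA'$. The main obstacle is the bookkeeping in the second step: one must simultaneously secure a polynomial-sized $A'$ and a sharp doubling bound. The factor $r^{-2}$ reflects the dyadic restriction to the scale $\Delta$, while the $M^{-23/2}$ absorbs the repeated polynomial losses from the Cauchy--Schwarz/BSG chain. A secondary subtlety is verifying $r \ge 1$ in the regime where $M$ is very small, which is precisely where the hypothesis $M \le |A|/(6K)$ enters.
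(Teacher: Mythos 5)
Your outline has the right external shape (dyadic decomposition, a BSG extraction, then Pl\"unnecke--Ruzsa), but it misses the central mechanism of the proof, and the step you leave vague is precisely the one that makes the theorem nontrivial. The standard Balog--Szemer\'edi--Gowers extraction applied to the popular-difference graph on $A\times A$, as you propose, produces a subset $A'$ whose doubling is controlled by powers of the \emph{energy} parameter $K$; iterating Pl\"unnecke would then give losses of the form $K^{O(n+m)}$, which is far weaker than the claimed $(M^9\log^6|A|)^{7(n+m)}$ when $M\ll K$. No combination of Corollary \ref{cor:energy_weight}, inequality (\ref{f:heart}) and Katz--Koester converts the $\E_3$ saving into an $M$-dependent doubling bound for a subset of $A$ extracted this way; those tools do not appear in the argument at all.

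The actual proof starts not from the decomposition of $\E(A)$ but from the cubic inequality of Proposition \ref{p:triangles_in_psi} (essentially (\ref{f:heart_2})): with $\psi=A\circ A$ one has $\sum_{\alpha,\beta}\psi(\alpha)\psi(\beta)\psi(\alpha-\beta)\,\Cf_3(A)(\alpha,\beta)\ge (\E(A)/|A|)^3$. After discarding the diagonal terms (this is where $M\le |A|/(6K)$ enters, not in securing $r\ge 1$) and applying Cauchy--Schwarz and H\"older, one decomposes $A-A$ into level sets $S_i=\{x\ne 0: 2^{i-1}d<\psi(x)\le 2^i d\}$ with $d\asymp \E^2|A|^{-3}\E_3^{-1/2}$ and deduces that for the dominant pair of scales $\E(S_j,S_k)\gg |S_j|^{3/2}|S_k|^{3/2}/(M^9\log^6|A|)$ --- i.e.\ the level sets of the convolution have near-maximal mutual additive energy with quality depending only on $M$ and logarithms, not on $K$. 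BSG (Theorem \ref{t:BSzG_1.5}) is then applied to $S_j,S_k$, yielding $S'\subseteq S_j$ with $|nS'-mS'|\ll \mu^{-7(n+m)}|S'|$ for $\mu^{-1}=M^9\log^6|A|$, and finally $A'$ is recovered as $A\cap(S'+x)$ for a translate $x$ chosen so that $|(A-x)\cap S'|$ is large (every element of $S_j$ has $\ge 2^{j-1}d$ representations as a difference), whence $|nA'-mA'|\le |nS'-mS'|$. The parameter $r=2^j$ is the dyadic scale of this level set. Without the passage through the level sets of $A\circ A$ and the triangle inequality that certifies their large energy, the claimed $M$-only dependence in (\ref{f:E_3_doubling}) cannot be reached.
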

\begin{proof}
Let $\E = \E(A)$, $\E_3 = \E_3 (A)$, $\psi = A\c A$.
Then as in (\ref{tmp:19.07.2012}), we have
$$
    \left( \frac{\E (A)}{|A|} \right)^3
        \le
            \sum_{\a,\beta} \psi (\a) \psi (\beta) \psi (\a - \beta) \Cf_3 (A) (\a,\beta) \,.
$$
Using the assumption $M\le |A| / (6K)$, we get
$$
    2^{-1} \left( \frac{\E (A)}{|A|} \right)^3
        \le
            \sum_{\a \neq 0,\beta \neq 0, \a \neq \beta} \psi (\a) \psi (\beta) \psi (\a - \beta) \Cf_3 (A) (\a,\beta) \,.
$$
As before
\begin{equation}\label{f:energy_start''}
    \sum_{\a \neq 0, \beta \neq 0, \a \neq \beta ~:~ \psi (\a), \psi (\beta), \psi (\a-\beta) \gg d}\,
            \psi^2 (\a) \psi^2 (\beta) \psi^2 (\a-\beta)
            \gg \frac{\E^6}{|A|^6 \E_3}\,,
\end{equation}
where  $d = 2^{-3} \E^2 |A|^{-3} \E^{-1/2}_3$.
In terms of the sets $S_i$, we obtain a variant of formula (\ref{tmp:19.07.2012_2}), namely
\begin{equation}\label{tmp:19.07.2012_2''}
    d^4 \cdot \sum_{j,k = 1}^l 2^{2j+2k} \sum_\a (A\c A)^2 (\a) (S_j * S_k) (\a)
            \gg \frac{\E^6}{|A|^6 \E_3}\,.
\end{equation}
Trivially
$$
    |S_i| (d 2^{i-1})^3 \le \E_3 \,,
$$
and
whence
\begin{equation}\label{tmp:28.07.2012_1}
    |S_i| \ll \E_3 / (d^3 2^{3i}) \,.
\end{equation}
Note also that  $d2^i \le \max_{x\neq 0} (A\c A) (x)$, $i\in [l]$ and hence
$$
    2^i
        %\le 2^l
        \le \frac{1}{|A|} \max_{x\neq 0} (A\c A)(x) \cdot K M^{1/2} \le K M^{1/2} \,.
$$
Because of
\begin{equation}\label{tmp:05.08.2012_-1}
    \sum_\a (A\c A)^2 (\a) (S_j * S_k) (\a) \le \E^{2/3}_3 \left( \sum_{\a} (S_j * S_k)^3 (\a) \right)^{1/3}
        \le
            \E^{2/3}_3 (|S_j| |S_k|)^{1/6} \E^{1/3} (S_j,S_k)
\end{equation}
then using (\ref{tmp:28.07.2012_1}), we can assume that the summation in (\ref{tmp:19.07.2012_2''})
is taken over $j,k$ such that
\begin{equation}\label{tmp:05.08.2012_1}
    \E (S_j,S_k) \gg \frac{|S_j|^{3/2} |S_k|^{3/2}}{M^9 \log^{6} |A|} := \mu |S_j|^{3/2} |S_k|^{3/2} \,.
\end{equation}
Applying (\ref{tmp:28.07.2012_1}), (\ref{tmp:05.08.2012_-1}) and a trivial upper bound for the additive energy,
namely,
$\E (S_j,S_k) \le |S_j|^{3/2} |S_k|^{3/2}$, we obtain
$$
    d^4 \cdot \sum_{j,k = 1}^l 2^{2j+2k} \sum_\a (A\c A)^2 (\a) (S_j * S_k) (\a)
        \ll
            d^4 \E^{2/3}_3 \cdot \sum_{j,k = 1}^l 2^{2j+2k} |S_j|^{2/3} |S_k|^{2/3}
                \ll\
$$
$$
                \ll
                    d^2 \E^{4/3}_3 \log^2 |A| \cdot \max_j 2^{2j} |S_j|^{2/3} \,.
$$
Thus the summation in (\ref{tmp:19.07.2012_2''}) is taken over $j\in [l]$ such that
\begin{equation}\label{tmp:05.08.2012_2}
    2^{j} |S_j| \gg 2^{-2j} M^{-2} K \log^{-3} |A| \cdot |A| \,.
\end{equation}
By Balog--Szemer\'{e}di--Gowers Theorem \ref{t:BSzG_1.5} and estimate (\ref{tmp:05.08.2012_1})
there are
$S' \subseteq S_j$, $S'' \subseteq S_k$ such that
$|S'| \gg \mu |S_j|$, $|S''| \gg \mu |S_k|$
and
$
    |S'+S''| \ll \mu^{-7} |S'|^{1/2} |S''|^{1/2}
$.
Suppose for definiteness that $|S''| \ge |S'|$.
Then
$$
    |S'+S''| \ll \mu^{-7} |S''| \,.
$$
Pl\"{u}nnecke--Ruzsa inequality (see e.g. \cite{tv}) yields
\begin{equation}\label{tmp:31.07.2012_1}
    |nS'-mS'| \ll \mu^{-7(n+m)} |S'| \,,
\end{equation}
for every $n,m \in \N$.
Using the definition of the set $S_j$ and inequality (\ref{tmp:05.08.2012_2}), we find $x\in \Gr$ such that
\begin{equation}\label{tmp:31.07.2012_2}
    |(A-x) \cap S'| \ge 2^{j-1} d |A|^{-1} |S'| \gg K^{-1} M^{-1/2} \mu 2^{j} |S_j|
        \gg
            M^{-23/2} 2^{-2j} \log^{-9} |A| \cdot |A| \,.
\end{equation}
Put $A' = A\cap (S'+x)$.
Using (\ref{tmp:31.07.2012_1}), (\ref{tmp:31.07.2012_2}) and the definition of $d$,
we obtain for all $n,m \in \N$
$$
    |nA'-mA'| \le |nS'-mS'| \ll \mu^{-7(n+m)} 2^{-j} |A| d^{-1} |A'|
        \ll
            \mu^{-7(n+m)} 2^{-j} K M^{1/2} |A'|
$$
and the result follows with $r=2^{j}$.
$\hfill\Box$
\end{proof}

Thus, for small $r$ our result is better than Theorem \ref{t:E_BSzG2} and Theorem \ref{t:BSzG1}
because we assume that just $\E_3 (A)$ is small and we obtain better bound for the doubling constant of $A'$,
correspondingly.
If $r$ is large than lower bound (\ref{f:E_3_size}) for cardinality of $A'$ is not so good
but upper bound (\ref{f:E_3_doubling}) for the doubling constant
becomes better than in Theorems \ref{t:E_BSzG2}, \ref{t:BSzG1}
as well as in Theorem \ref{t:small E_3}.

Note, finally, that condition (\ref{cond:r}) can be certainly relaxed
in spirit of assumption (\ref{f:S_eps}) from Theorem \ref{t:energy_gen}.

\bigskip

In the end of the section we give one more variant of
the arguments, using eigenvalues method.

\begin{theorem}
    Let $A\subseteq \Gr$ be a set, $D\subseteq A-A$, $D=-D$,
    $\eta \in (0,1]$ be a real number,
    \begin{equation}\label{cond:sigma_D_A}
        \sum_{x\in D} (A\c A) (x) = \eta |A|^2 \,,
    \end{equation}
    and $\E_3 (A) = \eta^3 M|A|^6 / |D|^2$.
    Then there is a set $A' \subseteq A$
    such that
    \begin{equation}\label{f:E_3_size*}
        |A'| \gg  \frac{\eta^{16} |A|}{M^5}  \,,
    \end{equation}
    and
    \begin{equation}\label{f:E_3_doubling*}
        |nA'-mA'| \ll \left( \frac{\eta^{15}}{M^5} \right)^{-7(n+m)} \frac{|D|}{\eta |A|} \cdot  |A'|
    \end{equation}
    for every $n,m\in \N$.
\label{t:E_3_sigma_D_A}
\end{theorem}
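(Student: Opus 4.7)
The strategy is to adapt the eigenvalue-based argument from the proof of Theorem~\ref{t:E_3_M} closely, treating $|D|/(\eta|A|)$ as the analogue of the doubling constant $K$ and converting the hypothesis on $D$ into a lower bound on $\E(A)$ by Cauchy--Schwarz. First I would apply Proposition~\ref{p:triangles_in_psi} to $\psi = A\circ A$ to obtain
$$\sum_{\a,\beta}\psi(\a)\psi(\beta)\psi(\a-\beta)\Cf_3(A)(\a,\beta) \ge \frac{\E(A)^3}{|A|^3}.$$
Since $\eta^2|A|^4 = \bigl(\sum_{x\in D}(A\circ A)(x)\bigr)^2 \le |D|\cdot\E(A)$, the assumption gives $\E(A) \ge \eta^2|A|^4/|D|$, and the triangle sum is bounded below by $\eta^6|A|^9/|D|^3$.

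Next I would isolate the degenerate contributions with $\a=0$, $\beta=0$, or $\a=\beta$, which together contribute $O(|A|\E_3(A)) = O(\eta^3 M|A|^7/|D|^2)$; these are harmless unless $M$ is so large that the conclusion (\ref{f:E_3_size*}) is already vacuous (this plays the role of the hypothesis $M\le|A|/(6K)$ in Theorem~\ref{t:E_3_M}). An additional Cauchy--Schwarz step of the type leading to (\ref{f:energy_start''}) then yields
$$\sum_{\a,\beta\ne 0,\,\a\ne\beta,\,\psi(\a),\psi(\beta),\psi(\a-\beta)\gg d}\psi^2(\a)\psi^2(\beta)\psi^2(\a-\beta) \gg \frac{\eta^{12}|A|^{12}}{|D|^6\E_3(A)},$$
with $d$ proportional to $\eta^4|A|^5/(|D|^2\E_3^{1/2}(A))$. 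Dyadically decomposing $\psi$ into level sets $S_i := \{x\ne 0 : 2^{i-1}d < (A\circ A)(x)\le 2^i d\}$ and bounding the inner sum $\sum_\a (A\circ A)^2(\a)(S_j*S_k)(\a)$ by H\"{o}lder as $\E_3^{2/3}(|S_j||S_k|)^{1/6}\E^{1/3}(S_j,S_k)$, together with the trivial inequality $|S_i| \ll \E_3(A)/(d^3 2^{3i})$, forces a dominant pair $(j,k)$ for which $\E(S_j,S_k) \gtrsim \mu|S_j|^{3/2}|S_k|^{3/2}$, where $\mu$ is an explicit polynomial in $\eta$ and $M^{-1}$.

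Finally, Theorem~\ref{t:BSzG_1.5} applied to such $(S_j,S_k)$ produces $S'\subseteq S_j$, $S''\subseteq S_k$ of sizes $\gtrsim\mu|S_j|,\,\mu|S_k|$ and with $|S'+S''|\ll \mu^{-7}|S'|^{1/2}|S''|^{1/2}$, so that Pl\"{u}nnecke--Ruzsa yields $|nS'-mS'|\ll \mu^{-7(n+m)}|S'|$ for all $n,m\in\N$. Using the definition of $S_j$, I would locate a shift $x\in\Gr$ with $|(A-x)\cap S'| \gtrsim 2^j d |A|^{-1}|S'|$ and set $A' := A\cap(S'+x)$; the size bound (\ref{f:E_3_size*}) and the sumset bound (\ref{f:E_3_doubling*}) then follow upon substituting the explicit expressions for $d$, $\mu$, and the forced lower bound on $2^j$. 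The main obstacle is the final bookkeeping: one must verify that the extra factor of $\eta$ propagated from the restriction to $D$ via Cauchy--Schwarz combines with the hypothesis $\E_3(A) = \eta^3 M|A|^6/|D|^2$ to give exactly the exponents $\eta^{16}/M^5$ in the size bound and $(\eta^{15}/M^5)^{-7(n+m)}$ in the sumset bound, while leaving the natural doubling factor $|D|/(\eta|A|)$ on the right-hand side of (\ref{f:E_3_doubling*}).
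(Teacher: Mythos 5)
Your proposal does not follow the paper's argument, and as written it would not prove the theorem in the stated form. The paper does \emph{not} take all three weights equal to $A\c A$ and does not reuse the dyadic decomposition of Theorem \ref{t:E_3_M}. Instead it first passes to the popular part $D_* = \{x\in D : (A\c A)(x)\ge \eta|A|^2/(2|D|)\}$, which carries at least half of the mass $\eta|A|^2$, and studies the quadratic form $\sum_{\a,\beta}D_*(\a)D_*(\beta)(A\c A)(\a-\beta)\Cf_3(A)(\a,\beta)=\sum_j|\mu_j|^2\omega_j$, where the $\mu_j$ are the eigenvalues of $\ov{\oT}^{N^{-1}\FF{D_*}}_A$ and $\omega_j=\langle \ov{\oT}^{N^{-1}\FF{\psi}}_A f_j,f_j\rangle\ge 0$ with $\psi=A\c A$. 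The crucial new ingredient is Lemma \ref{l:g_bound}, which gives $\omega_0\ge \mu_0^3/|D_*|$ and hence the lower bound $\mu_0^5/|D_*|\gg \sigma_*^5/(|A|^5|D_*|)$; two applications of Cauchy--Schwarz/H\"older then yield $\E(D_*)\gg \eta^{15}|D|^3/M^5=\nu|D_*|^3$, and Balog--Szemer\'edi--Gowers is applied \emph{directly to $D_*$}. Since $D_*$ is already a single level set by construction, no pigeonholing over dyadic levels is needed, and the set $A'=A\cap(D'+x)$ inherits the clean bounds $|A'|\gg\eta^{16}|A|/M^5$ and the doubling factor $|D|/(\eta|A|)$.

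Your route has two concrete problems. First, the dyadic decomposition into level sets $S_i$ of $A\c A$ unavoidably produces the losses visible in Theorem \ref{t:E_3_M}: powers of $\log|A|$ from summing over the $O(\log|A|)$ levels, and an uncontrolled parameter $r=2^j$ recording which level dominates; moreover the final factor in the doubling bound comes out as $2^{-j}|A|d^{-1}\sim 2^{-j}M^{1/2}|D|/(\eta^{5/2}|A|)$ rather than $|D|/(\eta|A|)$, and the size bound degrades to roughly $M^{-23/2}$ instead of $M^{-5}$. The statement you are asked to prove is log-free and $r$-free, so the bookkeeping you defer to the end cannot work out. Second, discarding the degenerate terms $\a=0$, $\beta=0$, $\a=\beta$ requires $|A|\E_3(A)\ll \E^3(A)/|A|^3$, i.e.\ roughly $M\ll \eta^3|A|^2/|D|$; this is not a hypothesis of Theorem \ref{t:E_3_sigma_D_A}, and there is a genuine intermediate range of $M$ where your argument breaks down but the conclusion is not yet trivial. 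The paper's spectral formulation never needs to remove the diagonal, which is why no analogue of the assumption $M\le|A|/(6K)$ of Theorem \ref{t:E_3_M} appears here.
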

\begin{proof}
    Let $\E=\E(A)$, $\psi = A\c A$, $\sigma$ be the sum from (\ref{cond:sigma_D_A}),
    and
    $$
        D_* = \{ x\in D ~:~ (A\c A) (x) \ge 2^{-1} \eta |A|^2 / |D| \} \,.
    $$
    Clearly, $D_*=-D_*$.
    Put
    \begin{equation}\label{tmp:22.08.2012_3-}
        \sigma_* = \sum_{x\in D_*} (A\c A) (x) \ge 2^{-1} \sigma = 2^{-1} \eta |A|^2 \,.
    \end{equation}
%    By Cauchy--Schwarz, we have
%\begin{equation}\label{tmp:22.08.2012_3}
%    \E \ge \frac{\eta^2 |A|^4}{4|D_*|} \,.
%\end{equation}
    Denote by $\{ f_j \}_{j\in [|A|]}$ the orthonormal eigenfunctions
    of the symmetric operator $\ov{\oT}^{N^{-1}\FF{D_*}}_A$.
    Of course $f_0 \ge 0$.
    As in Proposition \ref{p:triangles_in_psi} and as in formula (\ref{tmp:19.07.2012}), we get
$$
    \sum_{\a,\beta} D_* (\a) D_* (\beta) (A\c A) (\a-\beta) \Cf_3 (A) (\a,\beta)
        =
            \sum_{x,y,z\in A} D_* (x-y) \ov{D_* (x-z)} (A\c A) (y-z)
                    =
$$
\begin{equation}\label{tmp:22.08.2012_4}
                    =
                \sum_{j} |\mu_j (\ov{\oT}^{N^{-1}\FF{D_*}}_A) |^2 \cdot \langle \ov{\oT}^{N^{-1}\FF{\psi}}_A f_j, f_j \rangle \,.
\end{equation}
Because of $\FF{\psi} \ge 0$, we obtain
$$
    \omega_j := \langle \ov{\oT}^{N^{-1}\FF{\psi}}_A f_j, f_j \rangle = \sum_x \psi(x) (f_j \c \ov{f_j}) (x)
        \ge 0 \,, \quad \quad j\in [|A|] \,.
$$
Trivially
\begin{equation}\label{tmp:23.08.2012_1}
    \mu_0 := \mu_0 (\ov{\oT}^{N^{-1}\FF{D_*}}_A) \ge |A|^{-1} \sigma_* \,.
\end{equation}
Let us estimate $\omega_0$.
We have
$$
    \mu_0 f_0 (x) = A(x) (D_* * f_0) (x) \,.
$$
By Cauchy--Schwarz, we get
$$
    \mu^2_0 \left( \sum_x f_0 (x) \right)^2 \le |D_*| \sum_x (f_0 \c A)^2 (x) = |D_*| \omega_0 \,.
$$
Using estimate (\ref{f:g_bound}) of Lemma \ref{l:g_bound} and the formula above, we obtain
\begin{equation}\label{tmp:23.08.2012_2}
    \omega_0 \ge \frac{\mu^3_0}{|D_*|} \,.
\end{equation}
Applying
%(\ref{tmp:22.08.2012_3-}),
(\ref{tmp:23.08.2012_1}), (\ref{tmp:23.08.2012_2}), we get
$$
    \sum_{\a,\beta} D_* (\a) D_* (\beta) (A\c A) (\a-\beta) \Cf_3 (A) (\a,\beta)
        \gg
            \frac{\mu^5_0}{|D_*|} \gg \frac{\sigma^5_*}{|A|^5 |D_*|}\,.
$$
Using the upper bound for $\E_3 (A)$ and estimate (\ref{tmp:22.08.2012_3-}), we have
$$
     \sum_x (D_* \c D_*) (x) (A \c A)^2 (x) \gg \frac{\eta^3 \sigma^4_*}{M |A|^4} \,.
$$
Applying the arguments from (\ref{tmp:05.08.2012_-1}), we get
$$
    \E(D_*) \gg \frac{\eta^{15} |D|^3}{M^5}
                    %\gg \frac{\eta^9 |D_*|^3}{M^5}
                    = \nu |D_*|^3 \,.
$$
By Balog--Szemer\'{e}di--Gowers Theorem \ref{t:BSzG_1.5}
there is $D' \subseteq D_*$, $|D'| \gg \nu |D_*|$ such that
$
    |D'+D'| \ll \nu^{-7} |D'|
$.
Pl\"{u}nnecke--Ruzsa inequality (see e.g. \cite{tv}) yields
\begin{equation}\label{tmp:31.07.2012_1*}
    |nD'-mD'| \ll \nu^{-7(n+m)} |D'| \,,
\end{equation}
for every $n,m \in \N$.
Using the definition of the set $D_*$ and the number $\nu$,
we find $x\in \Gr$ such that
\begin{equation}\label{tmp:31.07.2012_2*}
    |(A-x) \cap D'|
        \ge 2^{-1} \eta |A| |D'| / |D|
            \ge
                2^{-1} \eta |A| \nu |D_*| / |D|
        \gg
            \frac{\eta^{16} |A|}{M^5} \,.
\end{equation}
Put $A' = A\cap (D'+x)$.
Using (\ref{tmp:31.07.2012_1*}), (\ref{tmp:31.07.2012_2*}),
% and the definition of $d$,
we obtain for all $n,m \in \N$
$$
    |nA'-mA'| \le |nD'-mD'| \ll \nu^{-7(n+m)} |D'|
        \ll
            \nu^{-7(n+m)} \eta^{-1} |D| |A|^{-1} |A'|
$$
and the result follows.
%This completes the proof.
$\hfill\Box$
\end{proof}

Taking $D=A-A$ in
Theorem \ref{t:E_3_sigma_D_A}, we
%implies
obtain Theorem \ref{t:small E_3} (with a little bit different constants).
Thus the result above is a generalization of Theorem \ref{t:small E_3}.

\bigskip

\no{Division of Algebra and Number Theory,\\ Steklov Mathematical
Institute,\\
ul. Gubkina, 8, Moscow, Russia, 119991}
%MSU, IPPI RAN\\}
%{\tt ilya.shkredov@gmail.com}
\\
and
\\
Delone Laboratory of Discrete and Computational Geometry,\\
Yaroslavl State University,\\
Sovetskaya str. 14, Yaroslavl, Russia, 150000
%MSU, IPPI RAN\\}
\\
and
\\
IITP RAS,  \\
Bolshoy Karetny per. 19, Moscow, Russia, 127994\\
{\tt ilya.shkredov@gmail.com}

\end{document}